\DeclareMathOperator{\Supp }{supp}
\DeclareMathOperator{\Id }{Id} 
\DeclareMathOperator{\Lin }{lin} 
\DeclareMathOperator{\Cor }{cor} 
\DeclareMathOperator{\Osc }{osc} 
\DeclareMathOperator{\Rem }{rem} 
\DeclareMathOperator{\Tem }{tem} 
\DeclareMathOperator{\D}{div}
\DeclareMathOperator{\dist}{dist}
\DeclareMathOperator{\sgn }{sign}
\newtheorem{theorem}{Theorem}[section]
\newtheorem{lemma}[theorem]{Lemma}
\newtheorem{proposition}[theorem]{Proposition}
\newtheorem{definition}[theorem]{Definition}
\newtheorem{remark}[theorem]{Remark}
\newtheorem{conjecture}[theorem]{Conjecture}
\def \TT  {\mathbb{T}} 
\def \RR {\mathbb{R}}  
\def \NN {\mathbb{N}}  
\def \ZZ {\mathbb{Z}}  
\def \ep {\varepsilon}
\def \l {\lambda}
\def \ek {\mathbf{e}_{k}}
\def \bp {\mathbf{ \Phi}}
\def \bw {\mathbf{ W}}
\def \ep {\varepsilon}
\def \p {\partial}
\newcommand{\comment}[1]{}
\numberwithin{equation}{section}
\begin{document}

\title[Nonuniqueness for transport equation at critical space regularity]{Nonuniqueness of weak solutions for the transport equation at critical space regularity}

\author{Alexey Cheskidov}
\address{Department of Mathematics, Statistics and Computer Science,
University of Illinois At Chicago, Chicago, Illinois 60607}
\email{acheskid@uic.edu}

\author{Xiaoyutao Luo}

\address{Department of Mathematics, Duke University, Durham, NC 27708}

\email{xiaoyutao.luo@duke.edu}


\subjclass[2010]{35A02, 35D30 , 35Q35}

\date{\today}

\begin{abstract}
We consider the linear transport equations driven by an incompressible flow in dimensions $d\geq 3$. For divergence-free vector fields $u \in L^1_t W^{1,q}$, the celebrated DiPerna-Lions theory of the renormalized solutions established the uniqueness of the weak solution in the class $L^\infty_t L^p$  when $\frac{1}{p} + \frac{1}{q} \leq 1$.  For such vector fields, we show that in the regime $\frac{1}{p} + \frac{1}{q} > 1$, weak solutions are not unique in the class $ L^1_t L^p$. One crucial ingredient in the proof is the use of both temporal intermittency and oscillation in the convex integration scheme.
\end{abstract}

\keywords{Transport equation, Nonuniquness, Convex integration}

\maketitle

\section{Introduction}
In this paper, we consider the linear transport equation on the torus $\TT^d$
\begin{equation}\label{eq:the_equation}
\begin{cases}
\partial_t \rho + u \cdot \nabla  \rho  =0 &\\
\rho|_{t=0} =\rho_0,
\end{cases}
\end{equation}
where $\rho : [0,T] \times \TT^d  \to \RR$ is a scalar density function, $u : [0,T] \times \TT^d  \to \RR^d$ is a given vector field. We always assume $u$ is incompressible, i.e.,
$$
\D u =0.
$$

By the linearity of the equation, even for very rough vector fields it is not difficult to prove the existence of weak solutions that solves the equation in the sense of distributions 
\begin{equation}\label{eq:def_weak_solutions}
\int_{\TT^d}\rho_0  \varphi(0,\cdot) \,dx = \int_0^T \int_{\TT^d}\rho (\p_t \varphi + u \cdot \nabla \varphi   ) \,dx dt   \quad \text{for all $\varphi \in C^\infty_c ([0,T) \times \TT^d)$} .
\end{equation}

Our main focus is the uniqueness/nonuniqueness issue for weak solutions to \eqref{eq:the_equation}. More precisely, we investigate whether the DiPerna-Lions uniqueness result is sharp.

\begin{theorem}[DiPerna-Lions~\cite{MR1022305}]
Let $p, q \in [1,\infty]$ and let $u \in L^1(0,T; W^{1, q} (\TT^d)) $ be a divergence-free vector field. For any $\rho_0 \in L^p(\TT^d)$, there exists a unique renormalized solution $\rho \in C( [0,T]; L^{p}(\TT^d) $  to \eqref{eq:the_equation}. Moreover, if
\begin{equation}
\frac{1}{p}  + \frac{1}{q} \leq 1
\end{equation}
then this solution $\rho$ is unique among all weak solutions in class $ L^\infty (0,T; L^p(\TT^d)$.
\end{theorem}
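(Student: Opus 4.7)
My plan follows the original DiPerna--Lions strategy, which is built on the theory of renormalized solutions. Call $\rho\in L^\infty(0,T;L^p)$ a \emph{renormalized solution} if, for every admissible $\beta\in C^1(\RR)$ (with growth so that $\beta(\rho)\in L^1_{t,x}$),
\begin{equation*}
\p_t \beta(\rho) + u \cdot \nabla \beta(\rho) = 0
\end{equation*}
in the sense of distributions. I will prove (a) existence of a renormalized solution in $C([0,T];L^p)$ with the prescribed datum, and (b) every weak solution in $L^\infty_t L^p$ is renormalized provided $\tfrac{1}{p}+\tfrac{1}{q}\leq 1$. Uniqueness is then immediate: applying (b) to the difference $\rho=\rho_1-\rho_2$ of two solutions sharing the same initial data, testing the renormalized identity against $\varphi\equiv 1$ and using $\D u=0$, one finds $\int_{\TT^d}\beta(\rho(t,\cdot))\,dx=\beta(0)|\TT^d|$ for all $t$; choosing a convex $\beta\geq 0$ vanishing only at the origin and bounded by $c|s|^p$ forces $\rho\equiv 0$.

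For existence I regularize both $u$ and $\rho_0$ by convolution, obtaining smooth $u_n,\rho_{0,n}$, solve the smooth transport equation along its volume-preserving flow to produce $\rho_n$ with $\|\rho_n(t)\|_{L^p}=\|\rho_{0,n}\|_{L^p}$, and extract a weak-$\star$ limit in $L^\infty_t L^p$. Smooth solutions are automatically renormalized; using strong convergence $u_n\to u$ in $L^1_t W^{1,q}$ one passes to the limit in $u_n\cdot\nabla\beta(\rho_n)$, so the limit is renormalized. Preservation of the $L^p$ norm combined with weak continuity in time then upgrades the limit from $L^\infty_t L^p$ to $C_t L^p$.

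The heart of the argument is step (b), the commutator lemma. Mollifying in space with $\eta_\ep$ and writing $\rho^\ep:=\rho\ast\eta_\ep$, one has
\begin{equation*}
\p_t \rho^\ep + u\cdot\nabla \rho^\ep = r_\ep, \qquad r_\ep := u\cdot\nabla \rho^\ep - (u\cdot\nabla \rho)\ast\eta_\ep,
\end{equation*}
so the smooth chain rule gives $\p_t \beta(\rho^\ep)+u\cdot\nabla\beta(\rho^\ep)=\beta'(\rho^\ep)r_\ep$, and the renormalization property for $\rho$ will follow once $r_\ep\to 0$ in $L^1_{\mathrm{loc}}$. A direct manipulation rewrites $r_\ep(t,x)$ as an integral of a \emph{difference quotient} of $u$ against $\rho(x-\ep y)$; H\"older with conjugate exponents $p,q$ (or with room to spare when $\tfrac{1}{p}+\tfrac{1}{q}<1$) yields the uniform bound $\|r_\ep\|_{L^1}\lesssim \|\nabla u\|_{L^1_t L^q}\|\rho\|_{L^\infty_t L^p}$, and continuity of translations in $L^q$ gives $r_\ep\to 0$ almost everywhere.

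The main obstacle is precisely this commutator step at the endpoint $\tfrac{1}{p}+\tfrac{1}{q}=1$: the bilinear pairing $\nabla u\cdot \rho$ barely closes, so the vanishing of $r_\ep$ requires an equi-integrability argument (controlling $\beta'(\rho^\ep)r_\ep$ rather than $r_\ep$ itself) and a careful approximation of $\nabla u$ by smooth fields in $L^q$. Once this is in place, the renormalization property of arbitrary $L^\infty_t L^p$ weak solutions, and hence uniqueness, follows.
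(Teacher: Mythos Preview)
The paper does not provide a proof of this theorem; it is stated as a classical result and attributed to DiPerna--Lions~\cite{MR1022305} without argument, serving only as background motivation for the nonuniqueness construction that is the paper's actual contribution. Your sketch follows the standard DiPerna--Lions strategy (regularization for existence, the commutator lemma $r_\ep\to 0$ under the duality condition $\tfrac1p+\tfrac1q\le 1$ for renormalization, and conservation of $\int\beta(\rho)$ for uniqueness), which is the correct outline; there is nothing in the paper to compare it against.

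One small remark on your existence step: passing to the limit in $u_n\cdot\nabla\beta(\rho_n)$ from the smooth approximations is more delicate than you indicate, since weak-$\star$ convergence of $\rho_n$ does not by itself give convergence of the nonlinear quantity $\beta(\rho_n)$. The cleaner route is to first extract a weak limit $\rho$ that is a distributional solution, then apply your step~(b) directly to $\rho$ to obtain the renormalization property, and only afterwards deduce strong $C_tL^p$ continuity from conservation of the $L^p$ norm. This is a reorganization rather than a gap, but worth keeping straight.
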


Based on scaling analysis and a close examination of the proof in~\cite{MR1022305}, one can speculate that if
\begin{equation}\label{eq:pq_relationship}
\frac{1}{p} + \frac{1}{q } > 1,
\end{equation}
then the uniqueness may fail. More specifically,

\begin{conjecture}\label{conjecture:main}
Let $p, q \in [1,\infty]$. Let $u \in L^1(0,T; W^{1, q} (\TT^d) ) $ be a divergence-free vector field. 
\begin{enumerate} 
	\item  If $ \frac{1}{p} + \frac{1}{q} \leq 1 $, then there exists a unique weak solution $\rho \in L^\infty( [0,T]; L^{p}(\TT^d) $  to \eqref{eq:the_equation}.
	\item  If $ \frac{1}{p} + \frac{1}{q} > 1 $, then weak solutions in the class $ L^\infty( [0,T]; L^{p}(\TT^d)) $  are not unique.
\end{enumerate}
\end{conjecture}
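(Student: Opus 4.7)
The plan is to prove part (2), since part (1) is the DiPerna--Lions theorem quoted in the excerpt. Fix $p, q \in [1,\infty]$ with $\frac{1}{p} + \frac{1}{q} > 1$. My goal is to produce, via convex integration, a divergence-free $u \in L^1_t W^{1,q}$ and two distinct weak solutions $\rho^{(1)}, \rho^{(2)} \in L^\infty_t L^p$ to \eqref{eq:the_equation} with $\rho^{(1)}(0) = \rho^{(2)}(0)$. By linearity this reduces to building one nontrivial $\rho \in L^\infty_t L^p$ with $\rho(0) \equiv 0$ and $\rho(T_*) \not\equiv 0$ for some $T_* > 0$.

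The natural relaxation is the continuity-defect system
\begin{equation*}
\partial_t \rho_q + \D(\rho_q u_q) = -\D R_q, \qquad \D u_q = 0,
\end{equation*}
for a smooth triple $(u_q, \rho_q, R_q)$, with inductive bounds $\|u_q\|_{L^1_t W^{1,q}} \lesssim 1$, $\|\rho_q\|_{L^\infty_t L^p} \lesssim 1$, and $\|R_q\|_{L^1_{t,x}} \to 0$. At stage $q+1$ I would add perturbations $w_{q+1}$ and $\theta_{q+1}$ built from a Mikado-type family: thin cylinders of radius $r_{q+1}\lambda_{q+1}^{-1}$ carrying a density profile $\Theta$ and a paired divergence-free velocity $W$ aligned along the cylinders, chosen so that the low-frequency part of $W \Theta$ equals $-R_q$ up to a gradient. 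The concentration parameter $r_{q+1}$ is the free dial, and the hypothesis $\frac{1}{p}+\frac{1}{q}>1$ is exactly what allows $r_{q+1} \to 0$ fast enough that
\begin{equation*}
\|\theta_{q+1}\|_{L^p} \cdot \|w_{q+1}\|_{W^{1,q}} \lesssim 1
\end{equation*}
with room to spare, so that the product $w_{q+1}\theta_{q+1}$ absorbs $R_q$ geometrically.

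The main obstacle, relative to nonuniqueness in the weaker class $L^1_t L^p$, is preserving $L^\infty$ control in time of $\rho_q$: temporal intermittency, the standard device for buying extra room in the error estimates, necessarily produces $L^p$-spikes in the density. My plan is to concentrate the density perturbation $\theta_{q+1}$ only in space, taking its temporal profile to be a smooth order-one cut-off, and to place all temporal intermittency on the velocity perturbation $w_{q+1}$ through a sharp time cut-off of size $\mu_{q+1}^{-1}$; since $w_{q+1}$ is only measured in $L^1_t$ this is compatible with its budget. The high-temporal-frequency part of $w_{q+1}\theta_{q+1}$ is then corrected by an inverse-divergence trick \`a la Modena--Sz\'ekelyhidi, leaving its low-frequency mean to cancel $R_q$.

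The hardest step will be controlling the new error generated by $\partial_t \theta_{q+1}$, which, unlike the fully intermittent case, does not decouple into a small contribution. My intention is to handle it by an Isett-style gluing: partition $[0,T]$ into subintervals of length $\mu_{q+1}^{-1}$, solve an exact continuity equation on each with drift $u_q$ so as to freeze $R_q$ to a piecewise-time-constant profile, and then perform the Mikado correction against this frozen error so that the residual from $\partial_t \theta_{q+1}$ only involves the derivative of a spatial profile composed with the Lagrangian flow. Verifying that this gluing preserves $u_q \in L^1_t W^{1,q}$ is the delicate point: it requires sharp stability estimates for the flow of a rough divergence-free field and a careful balance between $\mu_{q+1}$ and $\lambda_{q+1}$, and the critical regime $\frac{1}{p}+\frac{1}{q} > 1$ leaves very little slack in the parameter budget, which I expect to be the main technical hurdle.
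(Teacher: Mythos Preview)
The statement you are attempting is labeled a \emph{Conjecture} in the paper, and the paper does not prove it. The paper establishes nonuniqueness only in the weaker class $\rho\in L^1_tL^p$ (Theorem~\ref{thm:main_thm_short}), and Remark~\ref{remark:intro}(3) says explicitly that whether nonuniqueness holds in $L^\infty_tL^p$ remains open. So there is no ``paper's own proof'' to compare against for part~(2) as stated.

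More importantly, your scheme runs into a genuine obstruction, and it is not the one you flagged. You propose $\theta_{q+1}$ with an order-one temporal profile $a(t)$ and $w_{q+1}$ carrying all temporal intermittency via a profile $b(t)$. For the low-mode of $\theta_{q+1}w_{q+1}$ to cancel $R_q$ you need $\fint ab=1$; combined with $\|a\|_{L^\infty_t}\lesssim 1$ (required for $\theta_{q+1}\in L^\infty_tL^p$), this forces $\|b\|_{L^1_t}\gtrsim 1$. Hence the $L^1_t$ integration of $w_{q+1}$ gains nothing from temporal concentration, and
\[
\|w_{q+1}\|_{L^1_tW^{1,q}}\;\gtrsim\;\|b\|_{L^1_t}\,\sigma\mu\,\|W_k\|_{L^q}\;\sim\;\sigma\,\mu^{\,1+(d-D)(\frac{1}{p'}-\frac{1}{q})},
\]
which is small only in the Modena--Sz\'ekelyhidi/Modena--Sattig range $\tfrac1p+\tfrac1q>1+\tfrac{1}{d-D}$, not in the full range $\tfrac1p+\tfrac1q>1$. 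This is exactly the heuristic the paper spells out in Section~\ref{section:temporal_intermittency}: the factor that lets the paper reach the critical range is $\|g_\kappa\|_{L^1_t}\sim\kappa^{-1}$ on the velocity side, and that only works because the paper \emph{simultaneously} puts the dual factor $\widetilde g_\kappa$ (with $\|\widetilde g_\kappa\|_{L^\infty_t}\sim\kappa$) on $\theta$---which is precisely what destroys $L^\infty_tL^p$ control of the density.

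Your diagnosis that $\partial_t\theta_{q+1}$ is the hardest term is therefore off: with a slow-in-time $\theta$ that term is \emph{easier} than in the paper, not harder, and the Isett-type gluing you sketch does not touch the real bottleneck, which is the $L^1_tW^{1,q}$ budget for $w$. Absent a new idea that circumvents the $\|a\|_\infty\|b\|_1\ge|\fint ab|$ constraint, the proposal cannot close beyond the previously known ranges.
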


In this paper, we address the question $(2)$ in Conjecture \ref{conjecture:main} and prove the following.

\begin{theorem}\label{thm:main_thm_short}
Let $d\geq 3$ and $p ,q \in [1,\infty]$ satisfying $p>1$ and \eqref{eq:pq_relationship}. Then there exists a divergence-free vector field $$ u \in L^1 (0,T; W^{1,q}(\TT^d)) \cap L^\infty(0,T; L^{p'}(\TT^d) ),$$ such that  the uniqueness of \eqref{eq:the_equation} fails in the class 
$$ 
\rho \in L^1 (0,T; L^p(\TT^d)).
$$
Moreover, the initial data of such solutions $\rho $ can be attained in the classical sense.
\end{theorem}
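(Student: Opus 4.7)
The plan is to construct $(u,\rho)$ by a convex integration scheme for the transport-defect system
\[
\p_t \rho_q + u_q \cdot \nabla \rho_q = -\D \Rc_q, \qquad \D u_q = 0.
\]
I would iteratively build triples $(\rho_q, u_q, \Rc_q)_{q\in\NN}$ with $\|\Rc_q\|_{L^1_{t,x}} \to 0$, $\rho_q \to \rho$ in $L^1_t L^p$, and $u_q \to u$ in $L^1_t W^{1,q}$; in the limit $\Rc_q$ vanishes and $(u,\rho)$ solves \eqref{eq:the_equation}. Nonuniqueness would then be obtained by running the scheme twice with identical initial datum but two different prescribed terminal profiles $\rho(T,\cdot)$.

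\textbf{Space--time intermittent building blocks.} At each stage the perturbations would take the form
\[
\th_{q+1}(t,x) = \sum_k a_k(t,x)\, g_k(\mu t)\, \Theta_k(\l x), \qquad w_{q+1}(t,x) = \sum_k b_k(t,x)\, g_k(\mu t)\, W_k(\l x),
\]
where $\Theta_k, W_k$ are Mikado-type profiles concentrated in tubes of transverse width $r$ and oscillating at spatial frequency $\l$, and the $g_k(\mu t)$ are disjointly supported temporal bumps of width $\mu^{-1}$. The amplitudes $a_k, b_k$ would be selected so that the low-frequency part of $\sum_k a_k b_k\,\Theta_k W_k$ cancels $\Rc_q$ after inverting the divergence, while $W_k$ remains divergence-free. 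Writing $\rho_{q+1}=\rho_q+\th_{q+1}$, $u_{q+1}=u_q+w_{q+1}$, the new defect $\Rc_{q+1}$ would absorb three contributions: an oscillation error at frequency $\l$, gaining $\l^{-1}$ upon inverse divergence; a linear error from $\rho_q w_{q+1} + u_q \th_{q+1}$, controlled by frequency separation; and a time-derivative error $\p_t \th_{q+1}$ of size $\mu$.

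\textbf{Balancing and main obstacle.} Tracking $\|\th_{q+1}\|_{L^1_t L^p}$, $\|w_{q+1}\|_{L^1_t W^{1,q}}$, and $\|\Rc_{q+1}\|_{L^1_{t,x}}$ as monomials in $(\l,r,\mu)$ produces a system of scaling inequalities; the key point to verify is that these admit a simultaneous solution, with $\mu$ a small power of $\l$, \emph{precisely} when $\tfrac1p+\tfrac1q>1$, while at the DiPerna--Lions threshold $\tfrac1p+\tfrac1q=1$ the scheme degenerates. The hardest point is the coexistence of a spatial derivative on $u$ (every $W^{1,q}$ estimate of $w_{q+1}$ pays a factor $\l$) and the time-derivative error on $\rho$ (paying a factor $\mu$): purely spatial intermittency cannot absorb both at critical regularity, so temporal intermittency has to be introduced as an independent parameter alongside $\l$ and $r$, and reconciling the gains from inverse divergence with the loss from $\p_t$ on the temporal bumps is the delicate bookkeeping that drives the choice of exponents. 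Finally, classical attainment of $\rho_0$ would be arranged by multiplying each perturbation by a temporal cutoff vanishing on a shrinking interval $[0,\d_q]$ with $\d_q\to 0$, so that $\rho_q(0,\cdot)\equiv \rho_0$ for every $q$ and the Cauchy estimate in $L^1_t L^p$ transfers the initial trace to the limit.
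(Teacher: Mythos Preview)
Your outline captures the broad architecture of the paper's scheme, but there are two concrete gaps that would cause the iteration to fail as written.

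\textbf{The temporal concentration must be of order $\l$, not a small power.} You claim that the scaling inequalities close ``with $\mu$ a small power of $\l$''. This is backwards. The whole point of temporal intermittency here is to absorb the full spatial derivative in $\|w\|_{L^1_t W^{1,q}}$: passing from $L^\infty_t$ to $L^1_t$ on a bump of width $\mu^{-1}$ gains exactly $\mu^{-1}$, and this gain must compensate the factor $\l$ coming from $\nabla W_k(\l x)$. Hence one needs $\mu \sim \l$. With $\mu$ only a small power of $\l$ the $W^{1,q}$ bound diverges. Conversely, with $\mu\sim\l$ the time-derivative error $\p_t\th_{q+1}$ is of size $\l$, and the inverse divergence gain of $\l^{-1}$ only just balances it; closing this requires writing $\th_{q+1}$ as the divergence of a potential one order smoother, which is where the stationary Mikado structure (and the restriction $d\ge 3$) enters. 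The paper sets the temporal concentration equal to $\l$ and uses a separate, genuinely small oscillation parameter $\sigma = \l^\gamma$ for both space and time; you have collapsed these two roles into a single $\mu$.

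\textbf{Missing temporal corrector.} With temporal bumps in both $\th$ and $w$, the product $\th_{q+1} w_{q+1}$ contains a low-spatial-frequency piece proportional to $\big(g_k^2(\mu t)-\fint g_k^2\big)\,\Rc_q$. This term has no high spatial frequency, so the inverse divergence gains nothing, and it is $O(1)$ in $L^1_{t,x}$. The paper removes it by adding a further corrector $\th_o$ whose time derivative cancels this piece exactly (analogous to the temporal corrector in Buckmaster--Vicol); $\th_o$ is small because it carries a factor $\sigma^{-1}$ from integrating the temporal oscillation. Your error list (oscillation, linear, $\p_t\th$) omits this term entirely.

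Finally, note that the paper does \emph{not} use the same temporal profile on $\th$ and $w$: it places $\widetilde g_\kappa = \kappa g_\kappa$ on the density and the bounded $g_\kappa$ on the velocity, which is what produces the asymmetric duality $\th\in L^1_t L^p$, $w\in L^\infty_t L^{p'}$. With the symmetric ansatz you wrote, you would land in $L^2_t L^p \times L^2_t L^{p'}$, which is not the claimed class.
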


This result is proved by the convex integration technique developed over the last decade \cite{MR2600877,MR3090182,MR3866888,1701.08678,MR3884855,MR3898708}, in the spirit of \cite{MR3898708} and using the formulation of \cite{MR3884855}. One key ingredient is the use of both temporal intermittency and oscillations in the convex integration scheme, which is implemented by oscillating stationary building blocks intermittently in time.

\subsection{Background and main results}

It is known that for Lipschitz vector fields, smooth or classical solutions of \eqref{eq:the_equation} can be obtained by solving the ordinary differential equation for the flow map $X: [0,T] \times \TT^d \to \TT^d $
\begin{equation}\label{eq:the_equation_ODE}
\begin{cases}
\p_t X(t,x) = u(t,X(t,x))& \\
X(0,x) = x,
\end{cases}
\end{equation}
and setting $\rho(t,X) =  \rho_0(x)$.
For instance, the wellposedness and uniqueness of \eqref{eq:the_equation} can be deduced from the Cauchy- Lipschitz theory for \eqref{eq:the_equation_ODE}. Moreover, for such vector fields, the inverse flow map $X^{-1}(t)$ solves the transport equation
\begin{equation*} 
\begin{cases}
\p_t X^{-1} + u \cdot \nabla X^{-1} =0& \\
X^{-1}(0) = \Id.
\end{cases}
\end{equation*}

For vector fields that are not necessarily Lipschitz, the link between the PDE \eqref{eq:the_equation} and the ODE \eqref{eq:the_equation_ODE} is less obvious. Even though one can prove the existence of weak solutions fairly easily by the linearity of the equation, the uniqueness issue of \eqref{eq:the_equation} becomes subtler for non-Lipschitz vector fields. The uniqueness class for the density is generally related to the Sobolev/BV regularity of the vector field. The first result in this direction dates back to the celebrated work of DiPerna-Lions \cite{MR1022305} which used the method of renormalization. Since then a lot of effort has been devoted to determining how far the regularity assumption on the vector field can be relaxed. Profound ideas and complex theories, that are beyond the scope of this paper, have been developed, in particular, the notion of \emph{regular Lagrangian flows} introduced by Ambrosio \cite{MR2096794}. We refer to the works \cite{MR2096794,MR2044334,MR2030717,1806.03466,MR1882138, MR3573924,1812.06817,MR2369485}  and the surveys \cite{MR3690748,MR2425023} for regularity/uniqueness results in this direction and for related results of the continuity equation.

Very roughly speaking, there are currently two distinct methods of proving nonuniqueness for \eqref{eq:the_equation}. The first approach is \textit{Lagrangian}, using the degeneration of the flow map to show nonuniqueness at the ODE level; while the second approach is \textit{Eulerian}, using convex integration to prove nonuniqueness directly at the PDE level. 

In regard to the Lagrangian approach, in their original work \cite{MR1022305}, DiPerna and Lions provided a counterexample $u \in W^{1,q}$ with unbounded divergence and a divergence-free counterexample $u \in W^{s,1}$ for all $s< 1$ but $u \not \in W^{1,1} $. Much later, Depauw in \cite{MR2009116} constructed nonuniqueness in the class $\rho \in L^\infty_{t,x}$ for incompressible vector fields $L_{loc}^1 BV$ based on the example in \cite{MR482853}. This type of examples were revisited in \cite{MR3904158,MR2030717,MR3656475} in other contexts. More recently, in \cite{1911.03271}, Drivas, Elgindi, Iyer and Jeong proved nonuniqueness in the class $\rho \in L^\infty_t L^2 $ for $u \in L^1_t C^{1-}$ based on anomalous dissipation and mixing. We should emphasize that the Lagrangian approach is not suited for a construction of a divergence-free example with Sobolev regularity of one full derivative, say $u \in L^1_t W^{1,p}$.

On the Eulerian side, the first nonuniqueness result was obtained by Crippa, Gusev,  Spirito, and Wiedemann in \cite{MR3393180} using the framework of \cite{MR2600877}. However, the vector field $u$ was merely bounded and did not have an associated Lagrangian flow. The first breakthrough result for the Sobolev vector field was obtained by Modena and Sz\'ekelyhidi \cite{MR3884855}. Note that the Sobolev regularity $L^1_t W^{1,p}$ of the vector field implies the uniqueness of a regular Lagrangian flow, see for example \cite{MR3375545}. The contrast between the Lagrangian and Eulerian wellposedness has also been studied in various contexts, see for instance \cite{MR2425023,MR2520506,MR2534294, MR3569243}.

Starting with the groundwork work of Modena and Sz\'ekelyhidi \cite{MR3884855}, the Eulerian nonuniqueness issue of \eqref{eq:the_equation} has drawn a lot of research attention lately. Below are the functional classes where the nonuniqueness has been achieved:

\begin{enumerate}
    \item \cite{MR3884855} (Modena and Sz\'ekelyhidi): $\rho \in C_t L^p  $ when $u \in C_t W^{1,q}\cap C_t L^{p'}$ for $\frac{1}{p}  + \frac{1}{q} > 1 + \frac{1}{d-1}$, $p>1$ and $d\geq 3$. Later in \cite{MR4029736}: extension to the endpoint $p=1$ and $u$ also being continuous.
    
    \item \cite{1902.08521} (Modena and Sattig): $\rho \in C_t L^p  $ when $u \in C_t W^{1,q}\cap C_t L^{p'}$ for $\frac{1}{p}  + \frac{1}{q} > 1 + \frac{1}{d}$ and $d\geq 2$.
    
\item \cite{2003.00539} (Bru\`e, Colombo, and De Lellis): positive\footnote{Note that uniqueness result for positive $\rho$ can go beyond the DiPerna-Lions range, see \cite[Theorem 1.5]{2003.00539}} $\rho \in C_t L^p  $ when $u \in C_t W^{1,q}$ for $\frac{1}{p}  + \frac{1}{q} > 1 + \frac{1}{d}$ , $p>1$ and $d \geq 2$. 
    
\end{enumerate}

In light of the current state, it is then natural to ask whether one can close the gap between the DiPerna-Lions regime $\frac{1}{p} + \frac{1}{q} \leq 1 $ and the Modena-Sattig-Sz\'ekelyhidi regime $\frac{1}{p} + \frac{1}{q} > 1+ \frac{1}{d} $.

In this paper, we address this question and prove nonuniqueness in the full complement of the DiPerna-Lions regime
$$
\frac{1}{p} + \frac{1}{q} > 1
$$
for weak solutions in the class $\rho \in L^1_t L^p $, $p>1$ for dimensions $d\geq 3$.

\begin{theorem}\label{thm:main_thm}
Let $d\geq  3 $ and $p ,q \in [1,\infty]$ satisfying $p>1$ and \eqref{eq:pq_relationship}. For any $\ep>0$ and any time-periodic\footnote{We identify $[0,T]$ with an $1$-dimensional torus.} $ \widetilde{\rho} \in C^\infty(    [0,T] \times \TT^d)$ with constant mean
$$
\fint_{\TT^d} \widetilde{\rho}(t,x) \, dx =\fint_{\TT^d} \widetilde{\rho}(0,x) \, dx \quad \text{for all $t \in [0,T]$},
$$
there exist a vector field $u : [0,T] \times \TT^d \to \RR^d $ and a density $\rho : [0,T] \times \TT^d \to \RR $ such that the following holds.

\begin{enumerate}
	\item $u \in L^1 (0,T; W^{1,q}(\TT^d)) \cap L^\infty (0,T; L^{p'}(\TT^d))$ and  $\rho \in L^1(0,T; L^p(\TT^d))$.

	\item $(\rho, u)$ is a weak solution to \eqref{eq:the_equation} in the sense of \eqref{eq:def_weak_solutions}.
	
	\item The  deviation of $L^p $ norm is small on average: $ \|\rho - \widetilde{\rho} \|_{L^1_t L^p} \leq \ep$.  
	
	\item $  \rho(t)$ is continuous in the sense of distributions and for $t=0, T$, $\rho(t) = \widetilde{\rho}(t) $.
	
	\item The temporal supports satisfy  $\Supp_t \rho \cup \Supp_t u  \subset \Supp_t \widetilde{\rho} $.

\end{enumerate}
\end{theorem}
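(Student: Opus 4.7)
The plan is to prove Theorem~\ref{thm:main_thm} by convex integration, adapting the Modena--Sz\'ekelyhidi framework~\cite{MR3884855} with the additional ingredient of temporal intermittency. Using $\D u = 0$, I would first reformulate~\eqref{eq:the_equation} as the continuity equation with a vector-valued defect $R$,
\[
\p_t \rho + \D(\rho u) = -\D R, \qquad \D u = 0,
\]
and iteratively build a sequence $(\rho_q, u_q, R_q)_{q\ge 0}$ of smooth solutions of this system with temporal supports contained in $\Supp_t \widetilde{\rho}$ and $R_q \to 0$ in $L^1_{t,x}$. I would start from $(\rho_0, u_0, R_0)$ given by small time-cutoff modifications of $\widetilde{\rho}$, of $0$, and an explicit smooth error; the step $q \mapsto q+1$ would then produce perturbations $\theta_{q+1}, w_{q+1}$ at spatial frequency $\lambda_{q+1}$ and temporal intermittency rate $\nu_{q+1}$ (with $\lambda_q$ super-exponentially growing) ensuring $\|R_{q+1}\|_{L^1_{t,x}} \le \delta_{q+2}$ for a prescribed sequence $\delta_q \downarrow 0$.

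To construct the perturbations I would use \emph{stationary} spatial building blocks oscillated by an intermittent temporal profile. For a finite set of directions $\xi$, take smooth mean-zero $\TT^d$-periodic functions $\Theta_\xi, W_\xi$ with $W_\xi$ divergence-free and $\int_{\TT^d} \Theta_\xi W_\xi\, dx = \xi$, then rescale them to spatial frequency $\lambda_{q+1}$. The temporal profile $g_{q+1}(t)$ would be a periodized bump of concentration $\nu_{q+1}$ normalized by $\int_0^T g_{q+1}^2\, dt = 1$, so that $\|g_{q+1}\|_{L^s_t} \sim \nu_{q+1}^{1/2-1/s}$ --- small in $L^1_t$, large in $L^\infty_t$. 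The principal pieces of the perturbations would be
\[
\theta_{q+1} = \sum_\xi a_\xi(t,x)\, g_{q+1}(t)\, \Theta_\xi(\lambda_{q+1} x), \qquad w_{q+1}^{(p)} = \sum_\xi a_\xi(t,x)\, g_{q+1}(t)\, W_\xi(\lambda_{q+1} x),
\]
with amplitudes $a_\xi$ extracted from $R_q$ via the standard geometric-lemma/partition-of-unity formula so that $\sum_\xi a_\xi^2\, \xi \equiv R_q$ and $\Supp_t a_\xi \subset \Supp_t R_q \subset \Supp_t \widetilde{\rho}$. To this I would add a temporal corrector $w_{q+1}^{\mathrm{tem}}$ making $\p_t\theta_{q+1} + \D w_{q+1}^{\mathrm{tem}}$ negligible, together with a small divergence corrector restoring $\D w_{q+1} = 0$. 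The low-frequency part of the product $\theta_{q+1} w_{q+1}$ would then be $g_{q+1}^2 R_q$, whose time average is exactly $R_q$; the mean-zero remainder $(g_{q+1}^2 - 1) R_q$ would be absorbed into $R_{q+1}$ via a temporal inverse divergence.

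The analytic core will be to assemble the resulting errors into $\D R_{q+1}$ and to control $R_{q+1}$ simultaneously with the target norms of the perturbations. These errors split into (i) a spatial oscillation error from the high-frequency part of $\D(\theta_{q+1} w_{q+1}) + \D R_q$, which gains $\lambda_{q+1}^{-1}$ via the improved inverse-divergence operator of~\cite{MR3884855}; (ii) linear interaction errors $\D(\theta_{q+1} u_q + \rho_q w_{q+1})$, controlled by the high spatial frequency of the blocks; and (iii) temporal errors controlled by the intermittency rate $\nu_{q+1}$. The hard part will be to balance $\lambda_{q+1}, \nu_{q+1}, \delta_{q+2}$ so that all of these terms are summably small together with $\|\theta_{q+1}\|_{L^1_tL^p}$, $\|w_{q+1}\|_{L^1_tW^{1,q}}$, and $\|w_{q+1}\|_{L^\infty_tL^{p'}}$; the calculation should close precisely in the full complementary range $1/p + 1/q > 1$, with $\nu$ supplying the $1/d$ gain that in~\cite{MR3884855,1902.08521} came from the spatial concentration of Mikado-type blocks. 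The most delicate point will be keeping $u_q$ bounded in $L^\infty_tL^{p'}$: since $\|g_{q+1}\|_{L^\infty_t}$ is large, the spatial block $W_\xi$ must carry a compensating smallness in $L^{p'}$, forcing a careful tradeoff in the design of the building blocks. Once the iteration closes, $\rho := \lim_q \rho_q$ and $u := \lim_q u_q$ will weakly solve~\eqref{eq:the_equation}; property~(3) will follow by taking $\|R_0\|$ small and summing the geometric bounds on $\|\theta_{q+1}\|_{L^1_tL^p}$; property~(5) is inherited from the amplitude supports at every step; and distributional continuity with the endpoint values in~(4) will come from uniform $L^1_tL^p$ bounds on $\rho_q$ combined with equation-based negative-Sobolev bounds on $\p_t\rho_q$.
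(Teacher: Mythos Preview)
The overall skeleton is right, but there is a concrete gap in how you implement the temporal intermittency: you use the \emph{same} profile $g_{q+1}$ and the \emph{same} amplitude $a_\xi$ on both $\theta$ and $w$. With $\|g_{q+1}\|_{L^\infty_t}\sim\nu_{q+1}^{1/2}$ this forces $\|w_{q+1}\|_{L^\infty_tL^{p'}}$ to grow like $\nu_{q+1}^{1/2}$, and your proposed fix---making $W_\xi$ small in $L^{p'}$---means reintroducing spatial concentration of the building blocks. That is exactly the mechanism responsible for the $\tfrac1d$ loss in \cite{MR3884855,1902.08521}, so the scheme would not close in the full range $1/p+1/q>1$. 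The paper's key idea is instead to break the symmetry \emph{in time}: put $\widetilde g_\kappa:=\kappa\,g(\kappa\,\cdot)$ on the density and $g_\kappa:=g(\kappa\,\cdot)$ on the velocity, so that $\|g_\kappa\|_{L^\infty_t}\lesssim 1$, $\|g_\kappa\|_{L^1_t}\sim\kappa^{-1}$, $\|\widetilde g_\kappa\|_{L^1_t}\lesssim 1$, and $\int_0^1\widetilde g_\kappa g_\kappa\,dt=1$. Then $w\in L^\infty_tL^{p'}$ is automatic with no spatial trade-off, all the temporal concentration is loaded onto $\theta$ (harmless since $\theta$ is only measured in $L^1_tL^p$), and the $\kappa^{-1}$ gain from $\|g_\kappa\|_{L^1_t}$ pays for the spatial derivative in $\|w\|_{L^1_tW^{1,q}}$ once $\kappa$ is taken comparable to the spatial frequency. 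The amplitudes are likewise asymmetric, $A_k\sim|R_k|^{1/p}$ and $B_k\sim|R_k|^{1/p'}$, matching the $L^p/L^{p'}$ targets; a common $a_\xi\sim|R|^{1/2}$ only gives the right bounds at $p=2$.

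Two smaller points. Your ``temporal corrector $w_{q+1}^{\mathrm{tem}}$'' with $\partial_t\theta_{q+1}+\D w_{q+1}^{\mathrm{tem}}\approx 0$ does not fit the structure: $\partial_t\theta_p$ goes directly into the new defect as $R_{\mathrm{tem}}=\nu^{-1}\partial_t\big(\widetilde g_\kappa\sum_k\mathcal B(A_k,\bp_k(\sigma\cdot))\big)$, and the crucial gain there comes from writing $\bp_k=\D\mathbf{\Omega}_k$ so that the antidivergence recovers a full spatial derivative to offset the large temporal frequency $\kappa$. The genuine temporal corrector is a \emph{density} term $\theta_o$ whose time derivative cancels the high-temporal-frequency error $(\widetilde g_\kappa g_\kappa-1)\,\D R$; your ``temporal inverse divergence'' sentence points in this direction but should be implemented as a corrector to $\rho$, not to $u$. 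Finally, the endpoint condition $\rho(0)=\widetilde\rho(0)$, $\rho(T)=\widetilde\rho(T)$ in (4) is not obtained from negative-Sobolev control on $\partial_t\rho_q$; it is arranged by forcing every perturbation to have temporal support in some $[r_n,T-r_n]$, so that $\rho_n(0)$ and $\rho_n(T)$ are unchanged along the iteration.
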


It is easy to deduce Theorem \ref{thm:main_thm_short} from Theorem \ref{thm:main_thm}.
\begin{proof}[Proof of Theorem \ref{thm:main_thm_short}]
Let $ \rho_0 \in C^\infty_0(\TT^d)$ with $\|\rho_0 \|_p = 1$. We take $\widetilde{\rho}  = \chi(t) \rho_0 (x)$ where $\chi \in C^\infty_c([0,T] )$ is such that $\chi(t) =1 $ if $|t- \frac{T}{2}| \leq \frac{T}{4}$ and $\chi=0$ if $ |t- \frac{T}{2}| \geq \frac{3T}{8}  $. We apply Theorem \ref{thm:main_thm} with $\ep = \frac{T}{1000}$. The obtained solution $\rho$ cannot have a constant $L^p$ norm due to $ \|\rho - \widetilde{\rho} \|_{L^1 L^p} \leq \ep$, and thus is different from the renormalized solution associated with the same vector field $u$ emerging from the same initial data.

Since $\widetilde{\rho}$ vanishes near $t=0$, both the solution $\rho$ and the vector field $u$ also vanish near $t=0$ and hence the initial data is attained in the classical sense.

\end{proof}

\begin{remark}\label{remark:intro}
Several remarks are in order.

\begin{enumerate}

    \item In general, the non-renormalized solution  $\rho  \in L^1_t L^p  $ in Theorem \ref{thm:main_thm} attains its initial data in the sense of distribution instead of in some strong $L^p$ topology as in \cite{MR3884855,MR4029736,1902.08521,2003.00539}. This is due to the artifact of fast temporal oscillations.

    \item For any $k\in \NN$ the vector field $u$  satisfies $\int_0^T \| u (t)\|_{W^{k,\infty}}^r <\infty $  for some small $r>0$ depending on $k $. The ``bad'' part of $u$ concentrates on a small  set\footnote{In fact, the singular set of $u$ is dense, and as a result, there is no local regularity outside the singular set, cf. \cite{MR673830,1809.00600}.} in $[0,T] \times \TT^d$. The density  $\rho $ also satisfies $\int_0^T \| \rho (t)\|_{\infty }^r <\infty $  for some $r >0$.

    \item $L^1_t L^p$ is sharp in terms of the space regularity, but this is achieved at the expense of time regularity by adding temporal intermittency. We discuss this below and in detail in Section \ref{section:temporal_intermittency}. The question of whether the nonuniqueness holds in the class $\rho \in L^\infty_t L^p$ remains open.

    \item It seems possible to also cover the border case $p= 1$ by utilizing the technique in \cite{MR4029736}(see also \cite{MR3374958,1701.08678}).

\end{enumerate}
\end{remark}

\subsection{Continuity-defect equation and the convex integration scheme}

Let us outline the main ideas and strategies of the proof. We follow the framework of \cite{MR3884855} to treat both $\rho$ and $u$ as unknowns and construct a sequence of approximate solutions $(\rho_n, u_n R_n)$ solving the continuity-defect equation
\begin{equation}
\begin{cases}
\p_t \rho_n + u_n \cdot \nabla \rho_n = \D R_n \\
\D u_n =0.
\end{cases}
\end{equation}
The vectors $R_n$ are called the defect fields, which arise naturally when considering weak solutions of \eqref{eq:the_equation}. This framework allows us to use the interplay between the density $\rho_n$ and the vector field $u_n$ as in a nonlinear equation.

The main goal is to design suitable perturbations $\theta_n:= \rho_{n} - \rho_{n-1}$ and $w_n:= u_{n} - u_{n-1}$ such that the defect fields $R_n \to 0$ in an appropriate sense. The most important step is to ensure the oscillation part
\begin{equation}\label{eq:intro_osc_error}
 \D R_{\Osc}:= \D  (\theta_n w_n + R_{n-1} )
\end{equation}
 consists of only high frequencies so that the new defect field $R_n$ is much smaller than $R_{n-1}$. This technique is now considered standard among the experts, and we refer readers to \cite{MR2600877,MR3090182,MR3374958,MR3530360,MR3866888,1610.00676,1812.08734,1812.11311,1907.10436} for more discussion on this technique in other models.

In previous works \cite{MR3884855,MR4029736,1902.08521}, perturbations $(\rho_n, w_n)$ are designed so that \eqref{eq:intro_osc_error} has only high frequencies in space, and the error is canceled point-wise in time. In these works, the defect field $R_n \to 0$ in the norm $L^\infty_t L^1$. In particular, the final solution is homogeneous in time.

In this paper, we use a convex integration scheme that features both spatial and temporal oscillations.  This is done by adding in temporal oscillation when designing $(\rho_n, w_n)$ such that, to the leading order, \eqref{eq:intro_osc_error} can be split into two parts, one with high spatial frequencies, and the other with high temporal frequencies. This idea is implicitly rooted in the work \cite{MR3898708}, but it was not formulated to encode temporal intermittency but rather to cancel a part of the error caused by adding spatial intermittency.

Based on the above discussion, on the technical side, the defect fields $R_n $ shall be measured in $L^1_{t,x}$ instead of $L^\infty_t L^1$. In other words, the defect fields $R_n$ are canceled weakly in space-time, rather than pointwise in time and weakly in space. This relaxation allows us to exploit temporal intermittency and design the perturbations $(\rho_n, w_n)$ with critical space regularity, which we discuss below.

\subsection{Space-time intermittency in the convex integration}

Even though the concept of intermittency and its theoretical studies has been around for many years \cite{MR0495674,MR1428905,MR3152734} in hydrodynamic turbulence, it was only implemented with convex integration very recently in the seminal work \cite{MR3898708} of Buckmaster and Vicol. We can summarize the difficulty as follows. At the heart of its argument, convex integration relies on adding highly oscillatory perturbations to obtain weakly converging solutions. A more intermittent perturbation carries a more diffused Fourier side and introduces more interactions among oscillations. These harmful interactions are difficult to control and cause the iteration scheme to break down. We refer to \cite{MR2600877,MR3090182,MR3254331,MR3374958,MR3530360,MR3866888,1701.08678} for the birth and development of this technique in the fluid dynamics and \cite{MR3898708,MR3951691,1901.07485,MR3884855,1902.08521,1907.10436} and the survey \cite{MR4073888} for discussions on intermittency in convex integration.

To fix ideas, let us denote by $D$  the intermittency dimension (in space), cf.~\cite{MR1428905}. Roughly speaking, the solution is concentrated on a set of dimension $D $ in space. This is related to the development of ``concentration'' in the context of weak solutions, \cite{MR877643,MR882068}.

However, for the transport equation, using only spacial intermittency in a convex integration scheme is not enough to reach the full complement of DiPerna-Lions regime. If the solution $(\rho , u)$ is homogeneous in time, then by the duality $ \rho \in L^\infty_t L^p$ and $u\in L^\infty_t L^{p'} $ imposed by the machinery of convex integration, we can see that
\begin{equation}\label{eq:intro_no_temporal_intermittency}
 u\in L^\infty_t L^{p'} \Rightarrow u\in L^\infty_t W^{1,q} \,\text{  for }\, \frac{1}{p} + \frac{1}{q} > 1 + \frac{1}{d-D}.   
\end{equation}

In other words, the Sobolev regularity $u \in L^\infty W^{1,q}$ must come at the cost of integrability in space if the vector field is homogeneous in time. This simple heuristics works surprisingly well and explains the gap between the DiPerna-Lions regime $\frac{1}{p} + \frac{1}{q} \leq 1 $ and the Modena-Sattig-Sz\'ekelyhidi regime $\frac{1}{p} + \frac{1}{q} > 1+ \frac{1}{d} $ even when spatially fully intermittent $D=0$ building blocks were used in \cite{1902.08521,2003.00539}.

One of the most striking differences between previous schemes and the current one is that intermittency in space plays a very  \emph{little} role. In fact, we use the ``Mikado densities'' and ``Mikado fields'' in \cite{MR3884855} which is not spatially fully intermittent but only has a $d-1$-dimensional concentration. Furthermore, the convex integration scheme goes through as long as the stationary building blocks are not spatially homogeneous.

By contrast, the Sobolev regularity $ u \in L^1_t W^{1,q}$ emerges entirely from the temporal intermittency  in our construction that does not rely on the fundamental heuristics \eqref{eq:intro_no_temporal_intermittency} as previous works. Instead, we take advantage of the duality $ \rho \in L^1_t L^p$ and $u\in L^\infty_t L^{p'} $, which is consistent with the decay of the defect field in $L^1_{t,x}$ rather than $ L^\infty_t L^1_x$ norm.  The temporal intermittency of the vector field $u$ allows us to improve the space regularity as the expense of a worse time regularity, namely
\begin{equation}\label{eq:intro_temporal_intermittency}
u\in L^\infty_t L^{p'} \Rightarrow u \in L^1_t W^{1,q} \,\text{  for }\, \frac{1}{p} + \frac{1}{q} > 1.     
\end{equation}

Indeed, if $u$ is fully intermittent in time, then $L^\infty_t$ to $L^1_t$ embedding gains a full derivative in time. By a dimensional analysis, $ u \in L^1_t W^{1,q} $ can be achieved in \eqref{eq:intro_temporal_intermittency} as long as the associated temporal frequency is comparable to the spacial frequency, since $q < p'$ by $ \frac{1}{p} + \frac{1}{q} > 1$. Note that this approach requires a sharper estimate of the error involving the time derivative since the temporal frequencies become as large as spacial frequencies. We also emphasize that the heuristics \eqref{eq:intro_temporal_intermittency} encodes no information on spacial intermittency, which is fundamentally different from the heuristics \eqref{eq:intro_no_temporal_intermittency}. In fact, the convex integration scheme works in a wide range of concentration and oscillation parameters. We refer to Section \ref{section:prop_proof1} for the specific choice of parameters and Lemma \ref{lemma:tem_error} for the sharp estimate of the temporal error.

\subsection{Temporal intermittency via oscillating stationary solutions}
We will now describe the implementation of temporal intermittency needed to reach the optimal spatial regularity. This is achieved via oscillating stationary building blocks intermittently in time.

Current convex integration schemes employ spatially intermittent building blocks that are either not stationary \cite{1902.08521,2003.00539}, or stationary \cite{MR3884855} but only suitable for $d \geq 3$. Even though theoretically it seems to be possible to achieve temporal intermittency using non-stationary building blocks \cite{1902.08521,2003.00539} in $d \geq 2$, such an approach, if possible, would be less intuitive and significantly more complicated, and it is not clear to the authors that one can reach the same nonuniqueness regime as in the current paper. Our approach adheres closer to the original idea of adding space-time oscillations to stationary solutions implemented in the pioneering work \cite{MR2600877} that introduced the convex integration technique to fluid dynamics for the first time.

To perform convex integration on $\TT^d$, $d\geq 3$ we use the stationary Mikado density $\bp_k$ and Mikado flow $\bw_k$ on $\TT^d$, first introduced in \cite{MR3884855}. With stationary building blocks $ (\bp_k, \bw_k)$ at hand, we implement the temporal intermittency as follows. On one hand, we use temporal oscillations to relax the convex integration procedure from pointwise to weak in time. Given a solution $(\rho, u ,R)$ of the continuity-defect equation, we design a perturbation $(\theta, w)$ so that, to the leading order, it produces a high-high to low cascade in space-time that balances the old defect field $R$ in the sense that
$$
\D( \theta w + R) = \text{High Spacial Freq. Term} + \text{High Temporal Freq. Term}+  \text{Lower Order Terms}.
$$
The terms with high temporal frequencies will be further balanced by the time derivative of a small corrector, similar to \cite{MR3898708}, while the other terms can be easily handled by standard methods. On the other hand, the relaxation of convex integration to be done weakly in time allows us to add temporal intermittency in the perturbations $(\theta, w)$. The key is to ensure that $(\theta , w)$ is almost fully intermittent in time, which determines the regularity of the final solution $\rho \in L^1_t L^p $ and $u \in L^\infty_t L^{p'} \cap L^1_t W^{1,q}$.

To summarize, in the proposed convex integration scheme, the perturbations consist of space-time intermittent oscillatory building blocks. The temporal intermittency is used to achieve the optimal range in \eqref{eq:intro_temporal_intermittency}, whereas the temporal oscillation allows us to cancel the defect fields on average in space-time, consistent with the decay of $R_n$ in the norm $L^1_{t,x}$. We refer to Section \ref{section:temporal_intermittency} for more details.

\subsection{Organization of the paper}
The rest of the paper is organized as follows.
\begin{itemize}
    \item We introduce the notations and many technical tools used throughout the paper in Section \ref{section:pre}.
    \item Section \ref{section:proof} is devoted to the proof of Theorem \ref{thm:main_thm} by assuming the main proposition, Proposition \ref{prop:main_prop}.

    \item In Section \ref{section:Mikado} we recall the periodic stationary solutions $(\bp_k, \bw_k)$ on $\TT^d$, $d \geq 3$ in \cite{MR3884855}. These pairs $(\bp_k, \bw_k)$ will be the main building blocks in space of the convex integration scheme.

    \item Section \ref{section:temporal_intermittency} is a detailed explanation for the use of temporal oscillation and intermitetncy in the convex integration scheme. In particular, we will define the temporal oscillators $\widetilde{g}_\kappa, {g}_\kappa$ that we use to oscillate the building blocks $(\bp_k, \bw_k)$ in time.

    \item Section \ref{section:prop_proof1}, \ref{section:prop_proof2}, \ref{section:prop_proof3} constitute the proof of Proposition \ref{prop:main_prop}:
    \begin{itemize}
        \item[--]  In Section \ref{section:prop_proof1} we first define the perturbation density $\rho$ and vector field $w$ using the building blocks $(\bp_k, \bw_k)$. And then the new defect field $R$ is derived from the perturbations $\theta$ and $w$, which is the core of our convex integration scheme.
        
        \item[--]  The estimates for the perturbations $\rho$ and $w$ are done in Section \ref{section:prop_proof2}. Then we conclude the proof of the perturbation part of Proposition \ref{prop:main_prop}.
        \item[--]  The new defect field $R$ is estimated is Section \ref{section:prop_proof3}. The rest of the proof of Proposition \ref{prop:main_prop} will be completed in the end.
    \end{itemize}
\end{itemize}
\section{Preliminaries}\label{section:pre}
The purpose of this section to collect the technical tools that will be used throughout the paper. We keep this section relatively concise so that we are not distracted from the main goal of proving the nonuniqueness result.

\subsection{Notations}
Throughout the manuscript, we use the following notations.
\begin{itemize}
\item $\TT^d  = \RR^d / \ZZ^d$ is the $d$-dimensional torus. For any function $f: \TT^d \to \RR$ we denote by $f(\sigma \cdot)$ the $ \sigma^{-1} \TT^d$-periodic function $f(\sigma x)$.

\item For any $p \in [1,\infty]$, its H\"older dual is denoted as $p'$. Throughout the paper, $p$ is fixed as in Theorem \ref{thm:main_thm}. We will use $r$ for general $L^r$ norm.

\item For any $1\leq r \leq \infty$, the Lebesgue space is denoted by $L^r$. For any $f \in L^1(\TT^d) $, its spacial average is
$$
\fint_{\TT^d} f \,dx= \int_{\TT^d} f\,dx.
$$ 

\item For any function $f:[0,T] \times \TT^d \to \RR $, denote by $\| f(t) \|_r $ the Lebesgue norm on $\TT^d$ (in space only) at a fixed time $t$. If the norm is taken in space-time, we use $\|f \|_{L^r_{t,x}} $.

\item The space $C^\infty_0(\TT^d)$ is the set of periodic smooth functions with zero mean, and $C^\infty_c(\RR^d)$ is the space of smooth functions with compact support in $\RR^d$. 

\item We often use the same notations for scalar functions and vector functions. Sometimes we use $ C^\infty_0(\TT^d, \RR^d )$ for the set of periodic smooth vector fields with zero mean.

 \item We use $\nabla$ to indicate full differentiation in space only, and space-time gradient is denoted by $ \nabla_{t,x}$. Also, $\p_t$ is the partial derivative in the time variable.

\item For any Banach space $X$, the Banach space $L^r(0,T;X)$ is equipped with the norm
$$
\Big(  \int_{0}^T \| \cdot  \|_X^r \, dt \Big)^\frac{1}{r},
$$
and we often use the short notations $L^p_t X$ and $\| \cdot \|_{L^r_t X}$.

\item We write $X \lesssim Y$ if there exists a constant $C>0$ independent of $X$ and $Y$ such that $X \leq C Y $. If the constant $C$ depends on quantities $a_1,a_2,\dots,a_n$ we will write $X \lesssim_{a_1,\dots,a_n}$ or $X \leq C_{a_1,\dots ,a_n} Y $

\end{itemize}

\subsection{Antidivergence operators $\mathcal{R}$ and $\mathcal{B} $ on $\TT^d$}
We will use the standard antidivergence operator $\Delta^{-1} \nabla$   on $\TT^d$, which will be denoted by $\mathcal{R}$.

It is well known that for any $f \in C^\infty  (\TT^d)$ there exist a  unique $ u \in C^\infty_0 (\TT^d)$ such that 
$$
\Delta u = f -\fint f.
$$ 
For any smooth scalar function $f \in C^\infty(\TT^d) $, the standard anti-divergence operator $ \mathcal{R} : C^\infty(\TT^d)  \to C^\infty_0(\TT^d, \RR^d ) $ can be defined as
$$
\mathcal{R} f:=  \Delta^{-1} \nabla f ,
$$
which satisfies
$$
\D ( \mathcal{R}  f )  = f  -\fint_{\TT^d}f\quad \text{for all $ f \in C^\infty(\TT^d)$},
$$
and
$$
\| \mathcal{R} ( \D   u ) \|_r  \lesssim \| u \|_r   \quad \text{for all $ u \in C^\infty(\TT^d,\RR^d)$ and $1<  r <\infty$}.
$$

The next result, which says that $ \mathcal{R}$ is bounded on all Sobolev spaces $W^{k,p}(\TT^d)$, is classical, see for instance \cite[Lemma 2.2]{MR3884855} for a proof.

\begin{lemma}\label{lemma:antidivergence_bounded}
Let $d \geq 2$. For every $m \in \NN$ and $ r \in [1,\infty]$, the antidivergence operator $ \mathcal{R}$ is bounded on $ W^{m,r}(\TT^d)$ for any $m \in \NN$:
\begin{equation}
\|\mathcal{R} f \|_{W^{m,r}} \lesssim \|f \|_{W^{m,r }}.
\end{equation}
\end{lemma}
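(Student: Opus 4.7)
\medskip

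\noindent\textbf{Proof plan.} My plan is to reduce everything to the $L^r$ boundedness of $\mathcal{R}$ itself, and then prove the latter via Young's convolution inequality using that the kernel representing $\mathcal{R}$ is integrable on $\TT^d$.

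First I would use the fact that $\mathcal{R}=\nabla\Delta^{-1}$ is a translation-invariant Fourier multiplier, so it commutes with any spatial derivative $\partial^{\alpha}$ acting on smooth functions. Consequently
\begin{equation*}
\|\mathcal{R} f\|_{W^{m,r}} \;=\; \sum_{|\alpha|\le m}\|\partial^{\alpha}\mathcal{R} f\|_r \;=\; \sum_{|\alpha|\le m}\|\mathcal{R}\,\partial^{\alpha} f\|_r,
\end{equation*}
so the whole claim reduces to showing a single bound $\|\mathcal{R} g\|_r \lesssim \|g\|_r$ for all $r\in[1,\infty]$ and all $g\in C^\infty(\TT^d)$, after which density and a standard regularization close the argument.

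Next I would give $\mathcal{R}$ an explicit convolution representation. Let $G$ denote the unique mean-zero periodic Green's function with $\Delta G = \delta_0 - 1$ on $\TT^d$ (whose existence is the same elliptic solvability already invoked in the statement). For any $g\in C^\infty(\TT^d)$ the unique mean-zero solution of $\Delta u = g - \fint g$ is $u = G*(g-\fint g)$, and since the gradient kills the mean term,
\begin{equation*}
\mathcal{R} g \;=\; \nabla u \;=\; (\nabla G)*\bigl(g-\ts\fint g\bigr) \;=\; K * g,
\end{equation*}
where $K:=\nabla G$; the last equality uses that $K$ has zero mean on $\TT^d$ (as the gradient of a periodic function).

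The heart of the argument is then the claim $K\in L^1(\TT^d)$. For this I would compare $G$ to the Newtonian potential on $\RR^d$: writing $G(x)=N(x)+h(x)$ near the origin, where $N(x)=c_d|x|^{2-d}$ for $d\geq 3$ (resp.\ $c_2\log|x|$ for $d=2$) and $h$ is smooth, one obtains $|K(x)|=|\nabla G(x)|\lesssim |x|^{1-d}+1$ locally, and smoothness away from the origin handles the rest. A quick calculation in polar coordinates,
\begin{equation*}
\int_{|x|<1}|x|^{1-d}\,dx \;\simeq\; \int_0^{1} r^{1-d}\, r^{d-1}\, dr \;=\; 1,
\end{equation*}
shows the singularity is integrable, hence $K\in L^1(\TT^d)$. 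Applying Young's convolution inequality gives $\|\mathcal{R} g\|_r = \|K*g\|_r \leq \|K\|_1\|g\|_r$ uniformly in $r\in[1,\infty]$, which when combined with Step~1 yields the desired $W^{m,r}$ bound.

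The only genuine obstacle is the kernel analysis in Step~3 — in particular, one must keep the endpoints $r=1$ and $r=\infty$ on the table, which rules out the easier Calder\'on–Zygmund/Riesz-transform proof valid only for $1<r<\infty$. Routing everything through Young's inequality with an $L^1$ kernel exploits that $\mathcal{R}$ is a smoothing operator of order one (its symbol behaves like $|k|^{-1}$), and is what makes the endpoint cases work.
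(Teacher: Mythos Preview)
Your proof is correct and is exactly the standard argument: reduce to $L^r$ via commutation with $\partial^\alpha$, represent $\mathcal{R}$ as convolution with $\nabla G$, observe $|\nabla G(x)|\lesssim |x|^{1-d}$ near the origin so the kernel lies in $L^1(\TT^d)$, and conclude by Young's inequality for all $r\in[1,\infty]$. The paper does not give its own proof here but simply cites \cite[Lemma~2.2]{MR3884855}, whose argument is precisely this kernel-plus-Young route, so your approach coincides with the intended one.
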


Throughout the paper, we use heavily the following fact about $\mathcal{R}$.
$$
\mathcal{R}f(\sigma \cdot ) = \sigma^{-1} \mathcal{R}f  \quad \text{for any $f \in C^\infty_0(\TT^d)$ and any positive $\sigma \in \NN$.}
$$

We will also use its bilinear counterpart $ \mathcal{B}: C^\infty(\TT^d) \times C^\infty(\TT^d) \to C^\infty (\TT^d,\RR^d) $ defined by
$$
\mathcal{B}(a,f) : = a \mathcal{R} f  - \mathcal{R}( \nabla a  \cdot \mathcal{R} f).
$$

This bilinear version $\mathcal{B}$ has the additional advantage of gaining derivative from $f$ when $f$ has zero mean and a very small period. See also higher order variants of $\mathcal{B}$ in \cite{1902.08521}.

It is easy to see that $\mathcal{B}$ is a left-inverse of the divergence,
\begin{equation}\label{eq:bilinear_B_identity}
\D (\mathcal{B}(a,f)  ) = af -\fint_{\TT^d} af \, dx \quad \text{provided that $f \in C^\infty_0(\TT^d)$, }
\end{equation}
which can be proved easily using integration by parts.
The following estimate is a direct consequence of Lemma \ref{lemma:antidivergence_bounded}.
\begin{lemma}\label{lemma:cheapbound_B}
Let $d \geq 2$ and $1\leq r  \leq \infty$. Then for any $a,f \in C^\infty(\TT^d)$
\begin{align*}
\| \mathcal{B}(a,f)\|_r \lesssim \| a\|_{C^1} \|\mathcal{R}  f\|_r .
\end{align*}
\end{lemma}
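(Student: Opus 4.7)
My plan is to apply the triangle inequality directly to the defining formula $\mathcal{B}(a,f) = a\,\mathcal{R}f - \mathcal{R}(\nabla a \cdot \mathcal{R}f)$ and to estimate the two resulting pieces separately. For the first piece, a pointwise H\"older bound in the form $\|a\,\mathcal{R}f\|_r \leq \|a\|_\infty \|\mathcal{R}f\|_r$ is immediate. For the second piece, I would invoke Lemma \ref{lemma:antidivergence_bounded} with $m=0$ to discard the outer antidivergence, giving $\|\mathcal{R}(\nabla a \cdot \mathcal{R}f)\|_r \lesssim \|\nabla a \cdot \mathcal{R}f\|_r$, and then apply H\"older once more to pull out $\|\nabla a\|_\infty$. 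Summing the two bounds and using the trivial inequality $\|a\|_\infty + \|\nabla a\|_\infty \lesssim \|a\|_{C^1}$ yields the stated estimate.

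Since Lemma \ref{lemma:antidivergence_bounded} already packages all of the analytic content required (the $L^r$-boundedness of $\mathcal{R}$ on the torus), I do not anticipate any real obstacle: the argument is a two-line bookkeeping exercise combining that boundedness with H\"older's inequality. The only small things worth checking are that the cited bound is invoked in the scalar-to-vector (respectively vector-to-scalar-to-vector) shape appropriate to each occurrence of $\mathcal{R}$, and that it is applied at the regularity level $m=0$, which is included in the statement of Lemma \ref{lemma:antidivergence_bounded}. Note also that the second term needs no mean-zero hypothesis on $\nabla a \cdot \mathcal{R}f$ because $\mathcal{R}$ is defined on all of $C^\infty(\TT^d)$ by $\mathcal{R} = \Delta^{-1}\nabla$ via subtracting means.
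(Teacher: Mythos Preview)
Your proposal is correct and is exactly the argument the paper has in mind: the paper's own proof consists of the single line ``This follows from H\"older's inequality and Lemma \ref{lemma:antidivergence_bounded},'' which unpacks precisely into the triangle-inequality-plus-H\"older computation you describe.
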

\begin{proof}
This follows from H\"older's inequality and Lemma \ref{lemma:antidivergence_bounded}.
\end{proof}

\begin{remark}
The assumption on $f$ in Lemma \ref{lemma:cheapbound_B} can be relaxed to $f \in L^r(\TT^d)$. 
\end{remark}

\subsection{Improved H\"older's inequality on $\TT^d$}
We recall the following result due to Modena and Sz\'ekelyhidi \cite[Lemma 2.1]{MR3884855}, which extends the first type of such result \cite[Lemma 3.7]{MR3898708}.

\begin{lemma}\label{lemma:improved_Holder}
Let $\sigma \in \NN$ and $a,f :\TT^d \to \RR$ be smooth functions. Then for every $r \in [1,\infty]$,
\begin{equation}
\Big|   \|a f(\sigma \cdot ) \|_{r }  - \|a \|_{r} \| f \|_{r } \Big|\lesssim \sigma^{-\frac{1}{r}} \| a\|_{C^1} \| f \|_{ r }.
\end{equation}
\end{lemma}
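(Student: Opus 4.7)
My plan is to exploit the scale separation: $a$ varies on scale $1$ while $f(\sigma \cdot)$ is $\sigma^{-1}$-periodic, so on any cube of side $\sigma^{-1}$, $f(\sigma \cdot)$ completes exactly one period of $f$ while $a$ is nearly constant. This reduces the estimate to an exact identity modulo a small Lipschitz error.

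First I would partition $\TT^d$ into the $\sigma^d$ cubes $\{Q_j\}$ of side length $\sigma^{-1}$ aligned with the lattice $\sigma^{-1}\ZZ^d$ (possible precisely because $\sigma \in \NN$), pick a reference point $x_j \in Q_j$, and define the piecewise-constant approximation $\bar a := \sum_j a(x_j) \mathbf{1}_{Q_j}$. A change of variables $y = \sigma x$ on each $Q_j$ yields $\int_{Q_j} |f(\sigma x)|^r \, dx = \sigma^{-d} \|f\|_r^r$, and since $\bar a$ is constant on each cube, summing over $j$ gives the exact identity
\[
\|\bar a\, f(\sigma \cdot)\|_r^r = \sum_j |a(x_j)|^r \sigma^{-d} \|f\|_r^r = \|\bar a\|_r^r \|f\|_r^r,
\]
equivalently $\|\bar a\, f(\sigma \cdot)\|_r = \|\bar a\|_r \|f\|_r$.

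Next I would quantify how close $a$ is to $\bar a$. Since $a \in C^1$ and $\mathrm{diam}(Q_j) \lesssim \sigma^{-1}$, one has $\|a - \bar a\|_\infty \lesssim \sigma^{-1} \|a\|_{C^1}$, hence also $\|a - \bar a\|_r \lesssim \sigma^{-1} \|a\|_{C^1}$ using $|\TT^d| = 1$. By the reverse triangle inequality,
\[
\bigl|\|a\, f(\sigma \cdot)\|_r - \|\bar a\, f(\sigma \cdot)\|_r\bigr| \leq \|(a - \bar a)\, f(\sigma \cdot)\|_r \leq \|a - \bar a\|_\infty \|f\|_r \lesssim \sigma^{-1} \|a\|_{C^1} \|f\|_r,
\]
and similarly $\bigl|\|\bar a\|_r - \|a\|_r\bigr|\cdot \|f\|_r \lesssim \sigma^{-1}\|a\|_{C^1}\|f\|_r$. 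Combining these two inequalities with the identity from the first step by one more triangle inequality yields the claim (in fact with the sharper factor $\sigma^{-1}$, which trivially dominates $\sigma^{-1/r}$ for $r \in [1,\infty]$).

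There is no serious obstacle here; the only mild subtlety is that the partition must be aligned so that $f(\sigma \cdot)$ has exactly one full period per cube, which is where $\sigma \in \NN$ enters decisively. An alternative route, closer to the stated exponent $\sigma^{-1/r}$, is to write $|f(\sigma\cdot)|^r = \|f\|_r^r + h(\sigma\cdot)$ with $\fint_{\TT^d} h = 0$, integrate by parts in $\int |a|^r |f(\sigma \cdot)|^r \, dx$ against the antiderivative $\mathcal{R}h$ from Lemma~\ref{lemma:antidivergence_bounded} (which gains $\sigma^{-1}$ due to the small period), obtaining $\bigl|\|a\, f(\sigma\cdot)\|_r^r - \|a\|_r^r\|f\|_r^r\bigr| \lesssim \sigma^{-1}\|a\|_{C^1}^r\|f\|_r^r$, and then apply the elementary inequality $|x - y| \leq |x^r - y^r|^{1/r}$ for $x, y \geq 0$, $r \geq 1$ to extract the $r$-th root.
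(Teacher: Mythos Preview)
The paper does not supply its own proof of this lemma; it merely recalls the statement and cites \cite[Lemma~2.1]{MR3884855} (which in turn extends \cite[Lemma~3.7]{MR3898708}). Your cube-partition argument is correct and is essentially the proof given in the cited reference: one freezes $a$ on the $\sigma^{-1}$-cubes, uses the exact factorization for the piecewise-constant approximant, and absorbs the $O(\sigma^{-1}\|a\|_{C^1})$ Lipschitz error by the triangle inequality. As you observe, this actually yields the sharper factor $\sigma^{-1}$ rather than $\sigma^{-1/r}$. Your alternative route via the antidivergence $\mathcal{R}$ and the root-extraction inequality $|x-y|\le|x^r-y^r|^{1/r}$ is also valid and explains the exponent $\sigma^{-1/r}$ in the statement.
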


This result allows us to achieve sharp $L^r $ estimates when estimating the perturbations in Section \ref{section:prop_proof2}. Note that the error term on the right-hand side can be made arbitrarily small by increasing the oscillation $ \sigma$.

\subsection{Mean values and oscillations}
We use the following Riemann-Lebesgue type lemma.

\begin{lemma}\label{lemma:mean_values_oscillation}
Let $\sigma \in \NN $ and $a , f : \TT^d  \to \RR$ be smooth functions such that $f \in C_0^\infty ( \TT^d)$. Then for all even $n \geq 0$
\begin{equation}
\Big|     \fint_{\TT^d } a(x) f (\sigma x) \, dx \Big| \lesssim_n \sigma^{-n} \|a \|_{C^n} \| f \|_{2}.
\end{equation}
\end{lemma}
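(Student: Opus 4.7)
The plan is to reduce to the trivial $n=0$ case (where the bound is just Cauchy--Schwarz combined with $\|f(\sigma \cdot)\|_{L^2(\TT^d)} = \|f\|_2$) by iterated integration by parts against the Laplacian, exploiting in an essential way the zero-mean hypothesis $f \in C^\infty_0(\TT^d)$. Write $n = 2k$. Because $-\Delta$ is invertible on the $L^2$-orthogonal complement of the constants on the torus (its spectrum there is bounded away from $0$), there exists a unique smooth zero-mean $g_k$ with $\Delta^k g_k = f$, and a Parseval/Fourier series computation gives the elliptic estimate
\[
\|g_k\|_2 \;\lesssim_k\; \|f\|_2.
\]

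The heart of the argument is the chain-rule identity
\[
f(\sigma x) \;=\; (\Delta^k g_k)(\sigma x) \;=\; \sigma^{-2k}\,\Delta^k_x\bigl[g_k(\sigma\,\cdot)\bigr](x),
\]
obtained by differentiating the composition $g_k \circ (\sigma \cdot )$. Substituting into $\fint_{\TT^d} a(x)\,f(\sigma x)\,dx$ and integrating by parts $2k$ times (with no boundary contributions by $\TT^d$-periodicity) transfers all the derivatives onto the slow factor $a$:
\[
\fint_{\TT^d} a(x)\,f(\sigma x)\,dx \;=\; \sigma^{-n}\fint_{\TT^d} \Delta^k a(x)\,g_k(\sigma x)\,dx.
\]
Cauchy--Schwarz together with the rescaling invariance $\|g_k(\sigma \cdot)\|_2 = \|g_k\|_2$ then bounds the right-hand side by $\sigma^{-n}\|\Delta^k a\|_2 \|g_k\|_2$, and finally $\|\Delta^k a\|_2 \lesssim \|a\|_{C^{2k}} = \|a\|_{C^n}$ together with the elliptic estimate above yields the claimed estimate.

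There is no serious obstacle; this is a standard Riemann--Lebesgue/stationary-phase-style argument on the torus. The only point worth flagging is why the statement is restricted to \emph{even} $n$: each integration-by-parts step costs one full Laplacian on $a$ and one full inverse Laplacian on $f$, which remains an $L^2$-bounded operation on zero-mean functions. An odd $n$ would require a half-derivative of $f$ to be absorbed, which the right-hand side $\|f\|_2$ is not designed to accommodate. The hypothesis $f \in C^\infty_0(\TT^d)$ (rather than merely $f \in C^\infty(\TT^d)$) is likewise indispensable, as it is precisely what makes $\Delta^{-k} f$ well-defined.
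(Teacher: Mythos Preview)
Your proof is correct and follows essentially the same route as the paper: both argue by repeated integration by parts against the Laplacian, using the zero-mean of $f$ to invoke $\Delta^{-n/2}$ on $\TT^d$, then finish with Cauchy--Schwarz and the Plancherel bound $\|\Delta^{-n/2} f\|_2 \lesssim \|f\|_2$. Your additional remarks on why $n$ must be even and why the zero-mean hypothesis is essential are accurate and not spelled out in the paper.
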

\begin{proof}
Since $f$ has zero mean, by repeatedly integrating by parts we deduce that
$$
\fint_{\TT^d } a(x) f (\sigma x) \, dx = \sigma^{-n} \fint_{\TT^d } \Delta^{n/2}  a  \Delta^{-n/2} f (\sigma \cdot ) \, dx.
$$
On one hand, we have
$$
\|\Delta^{n/2}  a \|_{L^2(\TT^d)} \lesssim \| a \|_{C^{n}(\TT^d)} .
$$
On the other hand, since $f$ is zero-mean, by the Plancherel theorem
\[
\|\Delta^{-n/2} f \|_{L^2(\TT^d)} \lesssim \|  f   \|_{L^2(\TT^d)}.
\]

Thus for any even $n  $ we have
$$
\Big|     \fint_{\TT^d } a(x) f (\sigma x) \, dx \Big| \lesssim_n \sigma^{-n} \|a \|_{C^n} \| f \|_{2}.
$$
\end{proof}

\section{The main proposition and proof of Theorem \ref{thm:main_thm}}\label{section:proof}

\subsection{Time-periodic continuity-defect equation}
We follow the framework of \cite{MR3884855} to obtain approximate solutions to the transport equation by solving the continuity-defect equation
\begin{equation}\label{eq:defect_equation}
\begin{cases}
\p_t \rho + \D(\rho u) = \D R\\
\D  u =0,
\end{cases}
\end{equation}
where $R: [0,T] \times \TT^d \to \RR^d$ is called the defect field. In what follows, $(\rho, u, R)$ will denote a solution to \eqref{eq:defect_equation}.

Throughout the paper, we assume $T=1$ and identify the time interval $[0,1]$ with an $1$-dimensional torus. As a result, we will only consider smooth solutions $(\rho, u, R) $ to \eqref{eq:defect_equation} that are time-periodic as well, namely
$$
\rho(t+k)= \rho(t), \quad u(t+k)= u(t), \quad R(t+k)= R(t) \quad \text{for any $k \in \ZZ$}.
$$

For any $r>0$, let
$$
I_r:= [r,1-r].
$$
We now state the main proposition of the paper and use it to prove Theorem \ref{thm:main_thm}.
\begin{proposition}\label{prop:main_prop}
Let $d \geq  3$ and $p ,q \in [1,\infty]$ satisfying $p>1$ and \eqref{eq:pq_relationship}. There exist a universal constant $M>0$ and a large integer $N\in \NN$ such that the following holds. 

Suppose $(\rho, u, R) $ is a smooth solution of \eqref{eq:defect_equation} on $[0,1]$. Then for any $\delta,\nu >0$, there exists another smooth solution $(\rho_1, u_1, R_1)$ of \eqref{eq:defect_equation} on $[0,1]$ such that the density perturbation $\theta: = \rho_1 -\rho$ and the vector field perturbation $ w = u_1 -u$ verify the estimates
\begin{align}
 \|\theta \|_{L^1_t L^p} & \leq  \nu M \|R \|_{L^1_{t,x}}^{1/p}, \label{eq:main_prop_0}\\
\| w \|_{L^\infty_t L^{p'}} &\leq \nu^{-1} M \|R \|_{L^1_{t,x}}^{1/p'},  \label{eq:main_prop_1}\\
 \| w & \|_{L^1_t W^{1, q}}  \leq \delta . \label{eq:main_prop_2}
\end{align}
In addition, the density perturbation $\theta$ has zero spacial mean and satisfies
\begin{align}
\Big|   \int_{\TT^d } \theta (t,x) \varphi(x)  \, d x \Big|  & \leq \delta  \| \varphi \|_{C^N}, \quad \text{for any $t \in [0,1]$ and any $\varphi \in C^\infty(\TT^d)$} ,  \label{eq:main_prop_3}\\
\Supp \theta \subset   I_r \times    \TT^d &\quad \text{for some $r >0$}, \; \text{and} \;
\Supp_t \theta \cup \Supp_t w \subset \Supp_t R
. \label{eq:main_prop_4}
\end{align} 

Moreover, the new defect field $R_1$ satisfies
\begin{align}
\|R_1 \|_{L^1_{t,x}} \leq \delta. \label{eq:main_prop_5}
\end{align}
\end{proposition}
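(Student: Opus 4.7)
The plan is to follow the Modena–Sz\'ekelyhidi convex integration framework, enriched with temporal intermittency and oscillation in the spirit of Buckmaster–Vicol. Let $\chi(t)$ be a smooth temporal cutoff with $\Supp\chi \subset \Supp_t R \cap I_r$ for some small $r>0$. Introduce large frequency parameters $\l,\mu,\k \in \NN$ and a small intermittency parameter $\t>0$, all to be chosen at the end. Using the stationary Mikado density/flow pairs $(\bp_k,\bw_k)$ from Section \ref{section:Mikado} (concentrated at scale $\mu^{-1}$ and indexed by a small finite set of directions $k$) and the temporal oscillators $g_\k, \widetilde{g}_\k$ from Section \ref{section:temporal_intermittency}, I define the principal perturbations
$$\theta_p(t,x) = \sum_{k} a_k(t,x)\, g_\k(t)\, \bp_k(\l x), \qquad w_p(t,x) = \sum_{k} b_k(t,x)\, \widetilde{g}_\k(t)\, \bw_k(\l x),$$
where the amplitudes $a_k, b_k$ are built from $R$ via a Modena–Sz\'ekelyhidi-type partition of unity so that, at the leading order, the spatial mean of $\bp_k \bw_k$ paired with $a_k b_k$ and the temporal mean of $g_\k \widetilde{g}_\k$ reproduces $-R$. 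The dual scaling $a_k \sim \nu |R|^{1/p}$, $b_k \sim \nu^{-1}|R|^{1/p'}$ (schematically) encodes the asymmetry in \eqref{eq:main_prop_0}--\eqref{eq:main_prop_1}, while the temporal intermittency of $(g_\k,\widetilde{g}_\k)$ at scale $\t$ produces the gain in \eqref{eq:main_prop_0} and \eqref{eq:main_prop_2}; precise norm bounds follow from Lemma \ref{lemma:improved_Holder}. To close the scheme I add a divergence corrector $w_c$ (obtained by writing $\bw_k$ as a curl and transferring the derivative onto the amplitude) so that $w_p + w_c$ is solenoidal, a temporal corrector $w_t$ whose time-derivative cancels the non-mean, high-temporal-frequency portion of $\p_t \theta_p + \D(\theta_p w_p + R)$, and a tiny mean-corrector $\theta_c$ enforcing $\int_{\TT^d}\theta\,dx = 0$. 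Setting $\theta = \theta_p + \theta_c$, $w = w_p + w_c + w_t$, $\rho_1 = \rho + \theta$, $u_1 = u + w$, the new defect field $R_1$ is obtained by inverting a divergence in
$$\D R_1 = \p_t \theta + \D(u\theta + w\rho + w\theta + R).$$

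\textbf{Decomposition and estimate of the new defect.} I split the new defect as $R_1 = R_{\Lin} + R_{\Cor} + R_{\Osc} + R_{\Tem}$. The linear error $R_{\Lin} = \mathcal{R}(\p_t\theta_c + u\cdot\nabla\theta + w\cdot\nabla\rho)$ is controlled by putting derivatives on the low-frequency factors $(u,\rho)$ and gaining $\l^{-1}$ from $\mathcal{R}$ via Lemma \ref{lemma:antidivergence_bounded}. The corrector error $R_{\Cor}$, consisting of all cross-terms involving $w_c$, $w_t$, or $\theta_c$, is small because each corrector carries an intrinsic $\l^{-1}$, $\mu^{-1}$, or $\k^{-1}$ factor. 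The oscillation error $R_{\Osc}$ arises from the high-high-to-low interaction in $\theta_p w_p + R$: the defining algebraic identity for $a_k, b_k$ cancels the leading-order spatial mean, so the remainder is written via the bilinear antidivergence $\mathcal{B}$ of Section \ref{section:pre} applied to the product of a smooth amplitude with a zero-mean periodic profile oscillating at scale $\l^{-1}$, and one then invokes Lemma \ref{lemma:cheapbound_B} together with the Riemann–Lebesgue-type Lemma \ref{lemma:mean_values_oscillation} to absorb polynomial losses. The piece $R_{\Tem}$ is the temporal analog of $R_{\Osc}$: the non-mean part of the time-oscillatory product $g_\k \widetilde{g}_\k - \langle g_\k \widetilde{g}_\k\rangle$ against $\bp_k \bw_k$, which is canceled to leading order by $\p_t w_t$.

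\textbf{Main obstacle and parameter choice.} The hard part will be the estimate of $R_{\Tem}$. Since temporal intermittency forces the temporal frequency $\k$ to be comparable to the spatial frequency $\l$ in order to reach the sharp range $\tfrac{1}{p}+\tfrac{1}{q}>1$ (as anticipated by the heuristic \eqref{eq:intro_temporal_intermittency}), a naive bound that gains only $\k^{-1}$ from the antiderivative hidden inside $w_t$ does not compensate the factor $\k$ produced by $\p_t g_\k$. The resolution is the sharper estimate recorded in Lemma \ref{lemma:tem_error}: it uses the algebraic structure of $\bp_k \bw_k$ to recast the leading temporal residue as a spatial divergence, buying an additional $\mu^{-1}$ (or $\l^{-1}$) factor from $\mathcal{R}$ and closing the estimate. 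With this in hand, I choose the parameters in a hierarchy, first $\t$ small, then $\mu$ large, then $\l,\k$ very large with $\l \sim \k$ a large power of $\mu$, so that all four pieces of $R_1$ are bounded by $\delta$ in $L^1_{t,x}$, while \eqref{eq:main_prop_0}--\eqref{eq:main_prop_2} hold with a universal constant $M$. The support property \eqref{eq:main_prop_4} is inherited from the cutoff $\chi$. Finally, the distributional smallness \eqref{eq:main_prop_3} follows by testing $\theta_p$ against any $\varphi\in C^N(\TT^d)$ and applying Lemma \ref{lemma:mean_values_oscillation} to gain an arbitrarily negative power of $\l$, with $N$ chosen large enough to dominate the polynomial losses from the amplitudes $a_k$.
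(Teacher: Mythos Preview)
Your proposal has the right overall architecture (Mikado blocks, temporal intermittency, four-way error decomposition), but there are two genuine structural errors that prevent the scheme from closing.

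\medskip
\textbf{1. The assignment of $g_\kappa$ and $\widetilde{g}_\kappa$ is reversed.} You put the bounded factor $g_\kappa$ on $\theta_p$ and the intermittent factor $\widetilde{g}_\kappa = \kappa g_\kappa$ on $w_p$. With the paper's scalings \eqref{eq:def_g_theta_g_w} one has $\|\widetilde{g}_\kappa\|_{L^\infty} \sim \kappa$ and $\|\widetilde{g}_\kappa\|_{L^1} \sim 1$, so your $w_p$ satisfies $\|w_p\|_{L^\infty_t L^{p'}} \sim \kappa \nu^{-1}\|R\|_{L^1_{t,x}}^{1/p'} \to \infty$, which violates \eqref{eq:main_prop_1}, and $\|w_p\|_{L^1_t W^{1,q}}$ receives no temporal gain at all, so \eqref{eq:main_prop_2} cannot be made small. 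The duality in the proposition is $\theta \in L^1_t L^p$ versus $w \in L^\infty_t L^{p'}$: the \emph{vector field} must carry the bounded-in-time factor $g_\kappa$, and the crucial gain $\|g_\kappa\|_{L^1}\sim\kappa^{-1}$ is precisely what turns $w\in L^\infty_t L^{p'}$ into $w\in L^1_t W^{1,q}$ in the regime $\tfrac1p+\tfrac1q>1$. The paper's assignment (density carries $\widetilde{g}_\kappa$, field carries $g_\kappa$) is not a convention but a forced choice.

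\medskip
\textbf{2. The temporal corrector must live on the density, not the velocity.} You introduce a velocity corrector $w_t$ and claim that $\partial_t w_t$ cancels the high-temporal-frequency residue. But the continuity-defect equation is $\partial_t\rho_1 + u_1\cdot\nabla\rho_1 = \D R_1$; no term of the form $\partial_t w$ ever appears, so $\partial_t w_t$ cannot enter the balance. The paper instead adds a \emph{density} corrector $\theta_o$ (equation \eqref{eq:def_theta_o_2}), whose time derivative $\partial_t\theta_o$ legitimately appears and absorbs the term $(\widetilde{g}_\kappa g_\kappa - 1)\D R$. Relatedly, your description of $R_{\Tem}$ is off: in the paper $R_{\Tem}$ solves $\D R_{\Tem}=\partial_t(\theta_p+\theta_c)$, and the sharp Lemma \ref{lemma:tem_error} works not by any algebra of $\bp_k\bw_k$ but by writing $\bp_k=\D\mathbf{\Omega}_k$ (the potential of Theorem \ref{thm:peridoc_Phi_k_W_k}), which buys the extra $\mu^{-1}$ needed to absorb the factor $\kappa$ produced by $\partial_t\widetilde{g}_\kappa$. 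This is consistent with your diagnosis that a naive $\kappa^{-1}$ gain is insufficient, but the mechanism and the side of the equation are different from what you wrote.

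\medskip
A smaller point: the paper takes $\mu=\kappa=\lambda$ with the spatial \emph{oscillation} $\sigma=\lfloor\lambda^\gamma\rfloor$ a small power of the concentration, opposite to your hierarchy $\lambda\gg\mu$. Once the two errors above are fixed, revisiting the parameter balance along the lines of Lemma \ref{lemma:bound_hierarchy} will show why $\kappa\sim\mu$ (temporal concentration $\sim$ spatial concentration) is the correct coupling.
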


\subsection{Proof of Theorem \ref{thm:main_thm} }
\begin{proof}\label{subsec:proof_main_thm}
We assume $T=1$ without loss of generality. We will construct a sequence $(\rho_n, u_n , R_n) $, $n=1,2\dots$ of solutions to \eqref{eq:defect_equation} as follows. 
For $n=1$, we set 
\begin{align*}
\rho_1(t) &:= \widetilde{\rho},  \\ 
u_1(t) &:= 0, \\
R_1(t) &:=  \mathcal{R} \big( \p_t \widetilde{\rho} \big).
\end{align*}
Then $ (\rho_1, u_1 , R_1)  $ solves \eqref{eq:defect_equation} trivially by the constant mean assumption on $\widetilde{\rho} $. 

Next, we apply Proposition \ref{prop:main_prop} inductively to obtain $ (\rho_n, u_n , R_n)$ for $n=2,3\dots$ as follows. Let 
$$
\nu =  \frac{ \ep }{2 M} \| R_1 \|_{L^1_{t,x}}^{-\frac{1}{p}}, \quad \delta_n : = 2^{-p (n-1)} \| R_1 \|_{L^1_{t,x}}, 
$$
where we note that $1 <p,p'<\infty$ by the assumptions on $p,q$.

Given $(\rho_n, u_n,R_n) $, we apply Proposition \ref{prop:main_prop} with parameters $ \nu$ and $\delta_n$ to obtain a new triple $(\rho_{n+1}, u_{n+1},R_{n+1}) $. Then the perturbations $\theta_n : = \rho_{n+1} - \rho_{n}$ and $w_n : = u_{n+1} - u_{n}$ verify
\begin{align*}
 \|\theta_n \|_{L^1_t L^p} \leq M \nu \delta_n^{\frac{1}{p}}, 
 \quad \quad \| w_n \|_{L^\infty_t L^{p'}} \leq M \nu^{-1} \delta_n^{\frac{1}{p'}}, 
 \end{align*}
 and
 \begin{align*}
 \|w_n \|_{L^1_t W^{1, q}} &\leq  \delta_n, \\
 \Supp \theta_n \subset I_{r_n} \times & \TT^d \quad \text{for some $r_n >0$},
 \end{align*}
for all $n =1,2\dots$.
So there exists $(\rho, u) \in L^1_t L^p \times L^\infty_t L^{p'}$ such that
\begin{align}
 \rho_n  &\longrightarrow  \rho \quad \text{in } {L^1_t L^p},\\
 u_n &\longrightarrow  u  \quad \text{in } {L^\infty_t L^{p'} \cap L^1_t W^{1, q}} .
\end{align}
It is standard to prove $(\rho, u) $ is a weak solution to \eqref{eq:the_equation} since 
$$
\rho_n u_n  \longrightarrow  \rho u \quad \text{in } {L^1_{t,x}}.
$$
Moreover,
$$
\|\rho - \widetilde{\rho} \|_{L^1_t L^p} \leq \sum_{n\geq 1} \|\theta_n \|_{L^1_t L^p}  \leq \sum_{n \geq 1} \ep 2^{-n} \leq \ep.
$$

To show that $\rho(t)$  is continuous in the sense of distributions, let $  \varphi \in C_c^\infty(\TT^d)$. 
It follows that
\begin{align*}
\langle \rho(t)  - \rho(s) , \varphi \rangle & \leq \Big|  \langle \rho(t)  - \rho_n(t) , \varphi \rangle \Big| +\Big|  \langle \rho_n(t)  - \rho_n(s) , \varphi \rangle \Big|\\
& \quad + \Big|  \langle \rho_n(s)  - \rho(s) , \varphi \rangle  \Big|.
\end{align*}
Since by \eqref{eq:main_prop_2}
$$
\Big|  \langle \rho(t)  - \rho_n(t) , \varphi \rangle \Big| \leq \sum_{k \geq n+1} \delta_n \|\varphi \|_{C^N} \quad  \text{for all $ t \in [0,1]$},
$$
the continuity of $\rho$ in distribution follows from the smoothness of $\rho_n$.

The claim that $ \rho(t)  = \widetilde{\rho}( t)$ for $t=0,1$ follows from the fact that $ \rho_n(0) = \rho(0)$ and $ \rho_n(1) = \rho(1) $ for all $n$ since 
$$
\Supp \theta_n    \subset I_{r_n} \times \TT^d.
$$ 

Finally, the claim that $ \Supp_t \rho \cup \Supp_t u  \subset \Supp_t \widetilde{\rho} $ follows from the fact that $\Supp_t R_1 \subset  \Supp_t \widetilde{\rho}$.
\end{proof}


\section{Stationary Mikado density and Mikado fields}\label{section:Mikado}

In this section, we recall the construction of the stationary Mikado density and Mikado fields introduced by Modena and Sz\'ekelyhidi in \cite{MR3884855} with its roots dating back to \cite{MR3614753}, which will be used as the building blocks in space in the convex integration scheme.

Let $d \geq 3$ be the spacial dimension. We fix a vector field $\Omega \in C_c^\infty(\RR^{d-1} ,\RR^{d-1})  $ such that
$$
\Supp \Omega \in (0,1)^{d-1},
$$
and denote by $\phi \in C_c^\infty(\RR^{d-1})$ the solution
$$
\D \Omega = \phi.
$$

The vector field $ \Omega$  is also normalized such that
$$
 \quad\int_{\RR^{d-1}} \phi^2 = 1.
$$

Throughout this section, for $\mu >0$, we denote $\phi^{\mu} = \psi(\mu x)$ and $\Omega^{\mu} = \Omega(\mu x)$.

\subsection{Non-periodic Mikado densities $\Phi_k $ and Mikado fields $W_k$}

For each $k = 1,\dots, d$, we define
\begin{equation}
\Phi_k (x_1,\dots, x_d) = \mu^{\frac{d-1}{ p }} \phi^{\mu} (x_1,\dots,x_{k-1},x_{k+1},\dots, x_d),
\end{equation}
and
\begin{equation}
W_k (x_1,\dots, x_d) = \mu^{\frac{d-1}{ p' }} \phi^{\mu} (x_1,\dots,x_{k-1},x_{k+1},\dots, x_d) \ek,
\end{equation}
where $\ek $ is the $k$-th standard Euclidean basis and the exponents $p,p' \in (1,\infty)$ are as in Theorem \ref{thm:main_thm}.

Since $\Phi_k $ and $W_k$ effectively depend only on $d-1$ coordinates, we have the following.

\begin{theorem}[Exact stationary solution $(\Phi_k, W_k )$]\label{thm:WPhi_exact_stationary}
Let $d \geq 3$ and $\mu>0$. The density $\Phi_k : \RR^d \to \RR $ and the vector field  $W_k  : \RR^d \to \RR^d $, $k=1,\dots,d$  verify the following.

\begin{enumerate}
\item  $\Phi_k,W_k \in C^\infty(\RR^d)$. Both $\Phi_k$ and $W_k$ have zero mean in the unit cube
$$
\int_{ (0,1)^d}  \Phi_k  = \int_{ (0,1)^d } W_k   =0.
$$

\item The Mikado fields are divergence-free:
$$
\D W_k  = 0,
$$
and the pair $(\Phi_k  ,W_k  )$ solves the stationary transport equation
$$
\D( \Phi_k W_k )=  W_k \cdot \nabla \Phi_k =0.
$$

\item There holds 
\begin{equation}
\int_{ (0,1 )^d  }  \Phi_k  W_k  = \ek.
\end{equation}
 
\item There exists vector potentials $ \Omega_k \in C^\infty(\RR^d,\RR^d)$ such that
$$
\D  \Omega_k = \Phi_k 
$$
and 
$$
\| \Omega_k\|_{L^r((0,1)^d)} \lesssim  \mu^{-1 + \frac{d-1}{p } - \frac{d-1}{r}}.
$$

\end{enumerate}

\end{theorem}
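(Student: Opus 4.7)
The plan exploits a single structural observation: by construction $\phi^{\mu}$ is a function of the $d-1$ coordinates $\hat{x}_k := (x_1,\ldots,x_{k-1},x_{k+1},\ldots,x_d)$ only, while $W_k$ points in the $\ek$ direction. Hence every identity below reduces to a computation in dimension $d-1$ combined with a trivial $x_k$-integration. I implicitly take $\mu \geq 1$ throughout, so that $\Supp \phi^{\mu} \subset (0,1/\mu)^{d-1} \subset (0,1)^{d-1}$; this is the regime of interest for the convex integration scheme. With this in hand, parts (1) and (2) are by inspection. Smoothness is inherited from $\phi, \Omega \in C_c^\infty$. The zero-mean statement follows from $\phi = \D \Omega$ with $\Supp \Omega \subset (0,1)^{d-1}$: the divergence theorem gives $\int_{(0,1)^{d-1}} \phi^{\mu} = 0$, and Fubini in $x_k$ yields $\int_{(0,1)^d} \Phi_k = \int_{(0,1)^d} W_k = 0$. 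For the identities in (2), since $W_k = \mu^{(d-1)/p'} \phi^{\mu}(\hat{x}_k)\,\ek$ has a single nonzero component, depending only on $\hat{x}_k$, one immediately gets $\D W_k = \p_k W_k^{(k)} = 0$; analogously $W_k \cdot \nabla \Phi_k = W_k^{(k)} \p_k \Phi_k = 0$ since $\Phi_k$ is also independent of $x_k$, and then the product rule delivers $\D(\Phi_k W_k) = 0$.

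Part (3) is a direct computation using $\tfrac{1}{p}+\tfrac{1}{p'}=1$ and the change of variables $z = \mu y$:
\begin{align*}
\int_{(0,1)^d} \Phi_k W_k \, dx
&= \mu^{d-1}\, \ek \int_0^1 dx_k \int_{(0,1)^{d-1}} \phi^{\mu}(y)^2 \, dy \\
&= \mu^{d-1} \cdot \mu^{-(d-1)} \, \|\phi\|_{L^2(\RR^{d-1})}^2 \, \ek \;=\; \ek,
\end{align*}
where in the last step I use the normalization $\int \phi^2 = 1$.

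For part (4) I lift the $(d-1)$-dimensional potential. The chain rule yields $\D_y \Omega^{\mu}(y) = \mu\, \phi^{\mu}(y)$. I define $\Omega_k : \RR^d \to \RR^d$ by embedding the rescaled $(d-1)$-vector $\mu^{(d-1)/p - 1} \Omega^{\mu}(\hat{x}_k)$ into the $d-1$ components of $\RR^d$ other than the $k$-th (via the obvious order-preserving bijection), and setting the $k$-th component to zero. A short computation then gives $\D \Omega_k = \mu^{(d-1)/p - 1} \cdot \mu\, \phi^{\mu} = \Phi_k$. For the $L^r$ estimate, the change of variables $z = \mu y$ yields $\|\Omega^{\mu}\|_{L^r((0,1)^{d-1})} = \mu^{-(d-1)/r}\|\Omega\|_{L^r(\RR^{d-1})}$, and integration in $x_k \in (0,1)$ contributes no extra factor, so altogether $\|\Omega_k\|_{L^r((0,1)^d)} \lesssim \mu^{(d-1)/p - 1 - (d-1)/r}$, as claimed.

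No step here is genuinely obstructed; the whole statement is engineered so that the effective computation takes place in $\RR^{d-1}$ while the $\ek$-direction provides an automatic kernel for $\D$ and for $W_k \cdot \nabla$. The only point requiring care is the bookkeeping of the powers of $\mu$ in the construction of $\Omega_k$, which must produce exactly one factor of $\mu$ from the scaling of $\D_y \Omega^{\mu}$ to match the $\mu^{(d-1)/p}$ prefactor of $\Phi_k$.
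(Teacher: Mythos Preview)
Your proof is correct and follows essentially the same approach as the paper: the paper dismisses parts (1)--(3) as ``standard and follow directly from the definition'' and for part (4) defines exactly the same potential $\Omega_k = \mu^{-1+(d-1)/p}\,\Omega^{\mu}(\hat{x}_k)$ and computes the $L^r$ norm by rescaling. Your write-up is simply more explicit, and your remark that one needs $\mu \geq 1$ for the support to sit inside $(0,1)^{d-1}$ is a fair clarification (the paper's hypothesis ``$\mu>0$'' is slightly loose; in practice only large $\mu$ is used).
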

\begin{proof}
The first three properties are standard and follow directly from the definition.

For the last property, let us define
$$
\Omega_k : = \mu^{-1+\frac{d-1}{p}}  \Omega^\mu   ( x_1,\dots,x_{k-1},x_{k+1},\dots,x_d).
$$
Then by the definition of $\phi $ and $\Omega$, we have
$$
\D(\Omega_k ) = \Phi_k.
$$

The estimate for $\Omega_k$ follows immediately
\begin{align*}
\| \Omega_k\|_{L^r((0,1)^d)} &\leq \mu^{-1+\frac{d-1}{p}} \|  \Omega^\mu \|_{L^{r}(\RR^{d-1})} \\
&\lesssim   \mu^{-1 + \frac{d-1}{p } - \frac{d-1}{r}}.
\end{align*}
 
\end{proof}

\subsection{Geometric setup and periodization}
Next, we use the obtained non-periodic solutions $ ((\Phi_k , W_k ) $ to generate a family of $d$ pairs $ (\bp_k, \bw_k )$ by translation and periodization. The goal is to make sure $ (\bp_k, \bw_k )$ centered at disjoint line in $(0,1)^d$ that are parallel to the Euclidean basis $ \ek$.  

We choose a collection of distinct points $p_{ k} \in [1/4,3/4]^d $ for $k = 1,\dots, d$ and a number $\ep_0 >0$ such that
$$
\bigcup_{k} B_{\ep_0}(p_k) \subset [0,1]^d,
$$
and
\begin{equation}\label{eq:ep_0_pi_pj}
\dist(l_k,l_{k'})\geq  \ep_0 \quad \text{if  }  k\neq k' . 
\end{equation}
where $l_k \subset \RR^d$ is the line passing through $p_k$ with direction $k$. The lines $l_k$ will be the centers of our solutions $ (\bp_k, \bw_k ) $. 
We then choose   translations $ \mathfrak{T}_k :\TT^d \to \TT^d $ for $k=1,\dots, d$,
$$
\mathfrak{T}_k x := x + p_k.
$$

Now we are ready to introduce the periodic solution $(\bp_k , \bw_k)$ as the $1$-periodic extension of $(\Phi_k , W_k)$.

\begin{definition}[Periodic solutions]\label{def:periodic_Phi_k_W_k}

Let $d\geq3$. Define periodic density $\mathbf{ \Phi} : \TT^d \to \RR^d$ and periodic vector fields $\mathbf{W}_{k } :\TT^d \to \RR^d $ by
\begin{align*}
\mathbf{ \Phi}_k (\mathfrak{T}_k x)& =  \sum_{\substack{j \in \ZZ^d\\ j_k=0} }  \Phi_k (x+j) , \\
\mathbf{W}_k (\mathfrak{T}_k x)  &=  \sum_{\substack{j \in \ZZ^d\\ j_k=0} } W_k(x+j),
\end{align*}
and the periodic potential $\mathbf{\Omega}_k : \TT^d \to \RR^d$ by
$$
\mathbf{\Omega}_k (\mathfrak{T}_k x) : =  \sum_{\substack{j \in \ZZ^d\\ j_k=0} }      \Omega_k  (x+j).
$$
\end{definition}

From the definition we immediately obtain Sobolev estimates for $( \bp_k,\bw_k)$.
\begin{proposition}\label{prop:bounds_periodic_phi_w}
Let $d\geq3$. For any $ \mu  > 0$, the following estimates hold for any $1\leq r\leq \infty  $:
\begin{align*}
 \mu^{-m}\big\| \nabla^m     \bp_k  \big\|_{L^r(\TT^d)}  & \lesssim_{ m}     \mu^{  \frac{d-1}{p}  -\frac{d-1}{r}} , \quad  m \in \NN ,  \\
 \mu^{-m}\big\| \nabla^m     \mathbf{\Omega}_k  \big\|_{L^r(\TT^d)}&  \lesssim_{ m}    \mu^{ -1+   \frac{d-1}{p }  -\frac{d-1}{r}}    , \quad  m \in \NN \\
\mu^{-m}\big\| \nabla^m     \mathbf{W}_k  \big\|_{L^r(\TT^d)}&  \lesssim_{ m}    \mu^{  \frac{d-1}{p'}  -\frac{d-1}{r}}    , \quad  m \in \NN .
\end{align*}
\end{proposition}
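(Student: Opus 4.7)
The plan is to deduce the periodic estimates directly from their non-periodic counterparts established in Theorem \ref{thm:WPhi_exact_stationary}. First I would note that the translation $\mathfrak{T}_k$ is an isometry of every $L^r(\TT^d)$, so it can be dropped from the analysis. The crucial geometric observation is that $\Phi_k$, $W_k$, and $\Omega_k$ depend only on the transverse variables $(x_1,\dots,\widehat{x_k},\dots,x_d)$ and are supported in $(0, 1/\mu)^{d-1}$ in those variables. Provided $\mu \geq 1$, which is the regime of interest, the lattice translates indexed by $\{j \in \ZZ^d : j_k = 0\}$ have pairwise disjoint intersections with the fundamental cube $[0,1)^d$. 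Hence the periodic extension agrees with a single translate of the non-periodic object on each fundamental domain, so that
\begin{align*}
\|\bp_k\|_{L^r(\TT^d)} = \|\Phi_k\|_{L^r((0,1)^d)},
\end{align*}
with analogous equalities for $\bw_k$ and $\mathbf{\Omega}_k$.

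Next I would compute the non-periodic norms directly by Fubini in $x_k$ (over which the integrand is constant) followed by the change of variable $y = \mu \hat{x}$ in the transverse plane. For the $L^r$ norm of $\phi^\mu$ this yields
\begin{align*}
\|\phi^\mu\|_{L^r(\RR^{d-1})} = \mu^{-(d-1)/r}\|\phi\|_{L^r(\RR^{d-1})},
\end{align*}
so incorporating the prefactors $\mu^{(d-1)/p}$, $\mu^{(d-1)/p'}$, and $\mu^{-1+(d-1)/p}$ coming from the definitions of $\Phi_k$, $W_k$, and $\Omega_k$ recovers the $m=0$ case of the three claimed inequalities.

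Finally, to handle the derivatives, I would observe that $\nabla^m$ commutes with periodization, and that differentiating $\phi^\mu(x) = \phi(\mu x)$ brings down exactly one factor of $\mu$ per derivative in a transverse direction. Thus $\|\nabla^m \Phi_k\|_{L^r((0,1)^d)} \lesssim_m \mu^m \|\Phi_k\|_{L^r((0,1)^d)}$, with constants depending only on $m$ and the $C^m$ norm of $\phi$, and the same inequality persists on the torus by the disjoint-support argument above. Dividing by $\mu^m$ then recovers the stated uniform-in-$\mu$ bound. I do not anticipate any real obstacle; the argument is essentially bookkeeping of scaling exponents, and the only nontrivial ingredient is the disjoint-support property of the periodization, which is immediate from $\Supp(\phi) \subset (0,1)^{d-1}$ together with $\mu \geq 1$.
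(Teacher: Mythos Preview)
Your proposal is correct and follows essentially the same route as the paper, which dispatches the proposition with the single line ``These estimates follow immediately from rescaling.'' You have simply made explicit the disjoint-support property of the periodization and the change-of-variables computation that the paper leaves to the reader; the only mild caveat is that your argument (and implicitly the paper's) needs $\mu \geq 1$ for the supports to be disjoint, which is harmless since the proposition is only ever invoked for $\mu = \lambda$ large.
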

\begin{proof}
These estimates follow immediately from rescaling.
\end{proof}

\begin{theorem}[Stationary periodic solution $( \bp_k,\bw_k)$]\label{thm:peridoc_Phi_k_W_k}
Let $d\geq3$ and $\mu \geq 2 \ep_0^{-1}$. The periodic solutions $\bp_k, \bw_k \in C^\infty_0(\TT^d) $  verify the following.
\begin{enumerate}
    \item The vector field $\bw_k$ is divergence-free,
$$
\D \bw_k = 0,
$$
and the pair  $( \bp_k,\bw_k)$ solves the stationary transport equation
\begin{equation}
\D(\bp_{k}   \mathbf{W}_{k} )  =    \mathbf{W}_{k} \cdot \nabla \bp_{k}   = 0.
\end{equation}

\item The density $\mathbf{ \Phi}_k $ is the divergence of the potential $\mathbf{\Omega}_k$,
\begin{align}\label{eq:Div_Omega_k_close_Phi_k}
  \D \mathbf{\Omega}_k  =  \mathbf{ \Phi}_k .
\end{align}

\item  There holds 
\begin{equation}
\int_{ \TT^d  }  \bp_k  \bw_k  = \ek
\end{equation} and if $k \neq k'$, then
\begin{align}\label{eq:far_field_interactions}
\Supp \bp_k  \cap \Supp \bw_{k'}  = \emptyset.
\end{align}
\end{enumerate}
\begin{proof}
The first property follows directly from Theorem \ref{thm:WPhi_exact_stationary} while the second property follows from the definition.

The last property follows from the facts that $\Supp \Phi \subset (0,1)^{d-1}$ and $\dist(l_i,l_j) \geq \ep_0$ if $i \neq j$.
 
\end{proof}

 \end{theorem}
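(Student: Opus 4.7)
The plan is to deduce each of the three claimed properties of the periodic pair $(\bp_k, \bw_k, \mathbf{\Omega}_k)$ directly from the corresponding property of the non-periodic $(\Phi_k, W_k, \Omega_k)$ already established in Theorem \ref{thm:WPhi_exact_stationary}. The key geometric observation driving everything is that under the hypothesis $\mu \geq 2\ep_0^{-1}$ the support of $\Phi_k$ (and of $W_k$) fits inside a tube of perpendicular width $\lesssim \mu^{-1}$ around the $\ek$-direction, so after translating by $p_k$ and taking the periodic extension in Definition \ref{def:periodic_Phi_k_W_k}, at each point of $\TT^d$ at most one of the summed translates contributes.

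For Properties (1) and (2), I would use that differentiation commutes with both the translation $\mathfrak{T}_k$ and with periodic summation. Since $\D W_k = 0$, $W_k \cdot \nabla \Phi_k = 0$, and $\D \Omega_k = \Phi_k$ hold pointwise in $\RR^d$ by Theorem \ref{thm:WPhi_exact_stationary}, these identities transfer termwise to the summands in Definition \ref{def:periodic_Phi_k_W_k}, and hence to $\bw_k$, $\bp_k$, and $\mathbf{\Omega}_k$ on $\TT^d$. The stationary transport identity $\D(\bp_k \bw_k) = \bw_k \cdot \nabla \bp_k + \bp_k \D \bw_k$ then reduces to $\bw_k \cdot \nabla \bp_k$, which is zero term-by-term since each summand in the periodization is of this product form.

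For Property (3), the integral $\int_{\TT^d} \bp_k \bw_k\, dx$ I would compute by changing variables $y = \mathfrak{T}_k^{-1} x$, expanding both periodizations, and noting that cross terms $\Phi_k(y+j) W_k(y+j')$ with $j \neq j'$ vanish because the two thin tubes sit on different integer translates (separated by perpendicular distance $\geq 1$, much larger than $\mu^{-1}$); so only the diagonal survives and the integral collapses, up to a standard tiling argument on the fundamental domain, to $\int_{(0,1)^d} \Phi_k W_k \, dy = \ek$ by Theorem \ref{thm:WPhi_exact_stationary}. The disjoint supports claim follows from the geometric setup: $\Supp \bp_k$ lies in a tubular neighborhood of $l_k$ of perpendicular radius $\lesssim \mu^{-1}$ and $\Supp \bw_{k'}$ lies in a similar tube around $l_{k'}$, while $\dist(l_k, l_{k'}) \geq \ep_0$ with $\mu^{-1}$ small enough prevents the tubes from intersecting. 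The only modest obstacle throughout is carefully verifying that the smallness of $\mu^{-1}$ relative to $\ep_0$ suffices to guarantee disjointness of all relevant translates; once this is in place, every step is a direct transfer of the already-proved non-periodic identities.
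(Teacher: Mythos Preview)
Your proposal is correct and follows essentially the same approach as the paper's (very terse) proof: reduce each property to its non-periodic analogue from Theorem \ref{thm:WPhi_exact_stationary}, with the geometric disjointness coming from $\Supp\phi^\mu\subset(0,\mu^{-1})^{d-1}$ and the separation condition $\dist(l_k,l_{k'})\geq\ep_0$. The only difference is that you have spelled out the mechanism (locally finite periodization, no cross terms, tiling argument for the integral) that the paper leaves implicit.
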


\section{Temporal intermittency and oscillation}\label{section:temporal_intermittency}
Here we introduce one of the key ingredients of this paper, the use of both temporal intermittency and oscillation. This allows us to kill the previous defect field in a space-time average fashion instead of point-wise in time.

For convenience, we will treat the time interval $[0,1]$ as an $1$-dimensional torus $\TT$. In what follows we always write $[0,1]$ as an interval in time to distinguish it from the periodicity in space.

\subsection{Limitations of the previous schemes}
We start by discussing how the Sobolev regularity was obtained in previous convex integration schemes \cite{MR3884855,MR4029736,1902.08521}.  Assume that we have $(\rho, u ,R)$ the solution to the defect equation
$$
\p_t \rho + u \cdot \nabla \rho = \D R,
$$
the goal is to design suitable perturbations $(\theta , w)$ such that $(\rho+ \theta , u+ w)$ is a new solution to the defect equation with a smaller defect field $R_1$.

Typical in the convex integration scheme, the principle part of the perturbation $(\theta , w)$ takes the form
\begin{equation}
\theta = \sum_k a_k \bp_k, \qquad w = \sum_k b_k \bw_k.
\end{equation}
The coefficients $a_k,b_k$, depending on the previous defect field $R$, are chosen such that the leading order high-high to low interaction balance the defect field $R$
$$
\sum_k a_k b_k\fint_{\TT^d} \bp_k \bw_k \, dx + R \sim 0.
$$

Heuristically, without temporal intermittency, the duality given by the perturbation is
\begin{equation}\label{eq:time_intermittency_duality_1}
\theta \in  L^\infty_t L^p \quad w \in L^\infty_t  L^{p'}.
\end{equation}
To require Sobolev regularity of the vector field $w$, one has to trade in some integrability in space to obtain
\begin{equation}\label{eq:time_intermittency_duality_2}
w \in L^\infty_t W^{1,q} \quad \text{for q such that  } \frac{1}{p} + \frac{1}{q}  > 1 + \frac{1}{d-D}    
\end{equation}
where $D$ is the intermittency dimension of $(\bp_k,\bw_k)$. This has been done in \cite{MR3884855,MR4029736} for $D=1$ and in \cite{1902.08521,2003.00539} for $D=0$.
So the approach of using only spacial intermittency results in the nonuniqueness in the range
$$
\frac{1}{p} + \frac{1}{q} > 1 + \frac{1}{d}.
$$

\subsection{Convex integration with space-time intermittency and oscillation}
Our approach is to add in temporal intermittency and oscillations to the perturbation $(\theta, w)$,
\begin{equation}
\theta = \widetilde{g}_{\kappa} \sum_k a_k \bp_k, \quad w = g_{\kappa} \sum_k b_k \bw_k,
\end{equation}
where $ \widetilde{g}_{\kappa} ,  g_{\kappa} : [0,1] \to \RR$ are intermittent functions in time with oscillations. 

By imposing the duality 
$$ 
\int_{[0,1]} \widetilde{g}_{\kappa}    g_{\kappa} \, dt = 1,
$$ 
we anticipate that the defect field is canceled weakly in space-time
\begin{equation}\label{eq:heuristics_convex_spacetime}
\sum_k a_k b_k \fint_{[0,1] \times \TT^d }  \widetilde{g}_{\kappa}  g_{\kappa} \bp_k \bw_k \, dx dt + R \sim 0  .
\end{equation}

This would allow us to obtain additional regularity in space at the expense of regularity in time, which means that $ \widetilde{g}_{\kappa}$ and $g_\kappa$ have to have different scalings for different $L^p$ norms.
 Indeed, with intermittency in time, we impose the duality between $ \theta$ and $w$ to be
\begin{equation}\label{eq:time_intermittency_duality_3}
\theta \in  L^1_t L^p, \qquad w \in L^\infty_t L^{p'},
\end{equation}
which is also consistent with the ansatz \eqref{eq:heuristics_convex_spacetime}.

The hope is that with enough temporal intermittency, we get
\begin{align}\label{eq:temporal_intermittency_gain}
w \in L^\infty_t L^{p'} \Rightarrow  w \in L^1_t W^{1,q} \quad \text{for $\frac{1}{p} + \frac{1}{q} > 1$}.
\end{align}
Note that temporal intermittency is the key difference between \eqref{eq:time_intermittency_duality_2} and \eqref{eq:temporal_intermittency_gain}.

After performing convex integration in space, modulo an error term of high spacial frequencies, the remaining error in \eqref{eq:heuristics_convex_spacetime} reduces to
\begin{equation}\label{eq:heuristics_high_tem_error}
R ( \widetilde{g}_{\kappa}  g_{\kappa}  -1), 
\end{equation}
which is a term of high temporal frequency and thus can be canceled by adding a temporal corrector $\theta_{o}$ such that to the leading order
\begin{equation}\label{eq:heuristics_high_tem_error_1}
\p_t \theta_o = - ( \widetilde{g}_{\kappa}  g_{\kappa}  -1)\D R .
\end{equation}

To see that $\theta_o$ is indeed small compared to $\theta$,  note that the error term \eqref{eq:heuristics_high_tem_error} has only low frequencies in space, and thus we have
$$\|\theta_o \|_{L^1_t L^p} \ll 1, 
$$
provided $\theta_o$ oscillates much faster in time than the old defect filed $R$,

\subsection{Other considerations}
Introducing temporal intermittency and oscillations comes at the cost of worse bounds in time for the perturbations $\theta$ and $w$. Of particular importance is the question of whether the iteration scheme can go through, i.e. the defect field $R$ can be made small in $L^1_{t,x}$. The most relevant part in the scheme is the term $R_{\Tem} $ solving the equation
\begin{equation}\label{eq:R_tem_error}
\D R_{\Tem}= \p_t \theta . 
\end{equation}
 It is clear that this term will impose certain constraints on the size of temporal frequencies. In the end, it is the potential $\mathbf{\Omega}_k$ in Theorem \ref{thm:peridoc_Phi_k_W_k} that saves the day: writing $\theta$ as a divergence of a potential allows us to gain one full derivative in space. We can also infer from \eqref{eq:R_tem_error} that the temporal frequency should be comparable to the spatial frequency.

Notice that \eqref{eq:temporal_intermittency_gain} does not require any intermittency in space but only intermittency in time. It turns out that as long as $\theta$ and $w$ are not homogeneous, i.e. just a little intermittent in space, \eqref{eq:temporal_intermittency_gain} can be achieved. The spacial intermittency is used to reconcile \eqref{eq:temporal_intermittency_gain} and \eqref{eq:R_tem_error} which is impossible when $\theta$ and $w$ are completely homogeneous.

This is quite surprising and very different than the idea used in \cite{MR3884855,MR4029736,1902.08521,2003.00539}, where more spacial intermittency for a solution yields improvements for the nonuniqueness range.

\subsection{Intermittent functions in time $\widetilde{g}_\kappa $ and $g_\kappa $}
In this subsection we shall define the intermittent oscillatory functions $\widetilde{g}_\kappa $ and $g_\kappa $. We take a profile function $g  \in C^\infty_c ([0,1])$ such that 
$$
\int_{[0,1]} g^2  \,dt =1.
$$

Let $\kappa \geq 1$ be a temporal concentration parameter that will be fixed in the next section. We introduce the temporal intermittency by adding a concentration utilizing  $\kappa$ as follows. 

Define  $g_{\kappa }: \RR \to \RR$ by
\begin{align} 
g_{\kappa } (t) =  g(\kappa  t).
\end{align}
Note that $\kappa \geq 1$ implies $\Supp g_{\kappa} \subset [0,1]$.
By a slight abuse of notation, $g_{\kappa }$ will also denote the $1$-periodic extension of $ g_{\kappa } $ by means of the Possion summation. 

Next, we define
\begin{align}
\widetilde{g}_{\kappa } = \kappa g_{\kappa }  ,
\end{align}
so that $g_{\kappa }, \widetilde{g}_{\kappa } : [0,1] \to \RR$ are both $1$-periodic. We will use $ \widetilde{g}_{\kappa }$ to oscillate the densities $\bp_k$, and $ g_{\kappa } $ to oscillate the vectors $ \bw_k$. Note the following important intermittency estimates
\begin{equation}
 \begin{aligned}\label{eq:def_g_theta_g_w}
\|  \widetilde{g}_{\kappa } \|_{L^r([0,1])} &\lesssim \kappa^{1 - \frac{1}{r} }, \\
\|  {g}_{\kappa } \|_{L^r([0,1])} &\lesssim \kappa^{  - \frac{1}{r} },
\end{aligned}   
\end{equation}
and the normalization identity
\begin{equation}\label{eq:g_theta_g_w_interaction}
\int_{[0,1]} \widetilde{g}_{\kappa }  {g}_{\kappa }  \,dt =1.
\end{equation}

Because of \eqref{eq:def_g_theta_g_w}, for any $k \in \NN$ we may choose $r>0$ such that
$$
 \int_0^1 \| w (t)\|_{W^{k,\infty}}^r \, dt  \ll 1,
$$
which confirms a note in Remark \ref{remark:intro} that the vector field $  u$  will concentrate on a small ``bad'' set in $[0,1] \times \TT^d$.

\subsection{Temporal correction function $h_\kappa $}

Finally, concerning the temporal corrector $\theta_o$ in \eqref{eq:heuristics_high_tem_error_1}, we define a periodic function $  h_{\kappa} :[0,1] \to \RR$ by
\begin{equation}
 h_{\kappa} (t):= \int_0^t (\widetilde{g}_{\kappa } {g}_{\kappa } -1) \, d\tau,
\end{equation}
so that
\begin{equation}\label{eq:def_h_mu_t}
\p_t  h_{\kappa} =   \widetilde{g}_{\kappa}   {g}_{\kappa}  -1.
\end{equation}

Note that by \eqref{eq:g_theta_g_w_interaction}, $ h_{\kappa}$  is well-defined and an approximation of a saw-tooth function, and we have the estimate
\begin{align}\label{eq:def_h_mu_t_L_1}
\|  h_{\kappa} \|_{L^\infty[0,1]} \leq 1,
\end{align}
which holds uniformly in $\kappa$.

In other words, $h_\kappa$ is not intermittent at all for any $\kappa>0$, and it will be used to design the temporal corrector $\theta_o$ in the next section.

\section{Proof of Proposition \ref{prop:main_prop}: defining perturbations and the defect field}\label{section:prop_proof1}
The main aim of this section is to define the perturbation density $\theta$ and velocity $w$, as well as to solve for the new defect filed $R_1$. This section is the core of the proof of Proposition \ref{prop:main_prop}.

Let us summarize the main steps of this section as follows.

\begin{enumerate}
    \item We first fix all the parameters for the building blocks $ (\bp_k , \bw_k)$ and $\widetilde{g}_\kappa, g_\kappa $ as explicit powers of $\lambda$, whose value we shall fix in the end.
    
    \item Next, we introduce a partition of the old defect field $R$ to ensure the smoothness of the perturbation.

    \item Then we define the perturbation $ (\theta , w)$ which, to the leading order, consists of linear combinations of the building blocks $(\bp_k , \bw_k) $ with suitable  coefficients that  oscillate intermittently in time using the functions  $\widetilde{g}_\kappa, g_\kappa$ defined in Section \ref{section:temporal_intermittency}.
    
    \item 
 Having defined the perturbations, we finally design the new defect field $R_1$ so that the new density $\rho+ \theta $ and the new vector field $u+w$ solve the continuity-defect equation with the new defect field $R_1$. 
\end{enumerate}

\subsection{Defining the parameters}
Given $p,q$ as in Proposition \ref{prop:main_prop}, there exists $\gamma>0$ such that
\begin{equation}\label{eq:def_gamma}
\min\{ 1 - \frac{1}{p}, \frac{1}{q} - \frac{1}{p'} \} > 4\gamma.
\end{equation}

Let $\l_0$ be the lower bound on $\mu$ given by Theorem \ref{thm:peridoc_Phi_k_W_k}. We fix the frequency parameters $\l, \mu,\kappa , \sigma >0$ as follows:
\begin{itemize}

\item The major frequency parameter
$$
\l \geq \l_0
$$
will be fixed at the end depending on the previous solution $(\rho,u,R) $ and the given parameters $\delta,\nu$ in Proposition \ref{prop:main_prop}.

\item Concentration parameters $\mu, \kappa$:
\begin{align*}
 \mu & = \kappa = \l.
\end{align*}

\item Oscillation parameter $\sigma   \in \NN $:

\begin{align*}
 \sigma   = \lfloor \l^{ \gamma}  \rfloor.
\end{align*}

\end{itemize}

Note that the space and time periodicity require $\sigma$ to be an integer. It is also worth noting that both  $\mu$ and $\sigma$ can be any positive powers of $\l $. In contrast, the temporal concentration $\kappa$ has to be almost a full spacial derivative.

Below is a direct consequence of the choice of parameters.
\begin{lemma}\label{lemma:bound_hierarchy}
There exists $r>1$ such that for any $\l \geq \l_0$, there holds
\begin{align*}
      \sigma   \mu^{\frac{d- 1}{p'}  -\frac{d-1}{q} } &\leq \l^{- 2\gamma},\\
        \mu^{\frac{d-1}{p} -  \frac{d-1}{r}  } &\leq
    \l^{-  \gamma} .
\end{align*}
\end{lemma}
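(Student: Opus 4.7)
The plan is to prove both inequalities by direct substitution of the chosen parameters and a short algebraic manipulation, picking the auxiliary exponent $r$ explicitly in terms of $\gamma$, $p$, and $d$. The key is that $\gamma$ was defined in \eqref{eq:def_gamma} precisely so that it is strictly smaller than a constant multiple of the critical quantities $1 - \frac{1}{p}$ and $\frac{1}{q} - \frac{1}{p'}$, giving us room to absorb the $\sigma$ factor and still obtain negative powers of $\lambda$.

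For the first inequality, I would substitute $\mu = \lambda$ and $\sigma \leq \lambda^{\gamma}$ to rewrite the left-hand side as at most
\[
\lambda^{\gamma + (d-1)\bigl(\frac{1}{p'} - \frac{1}{q}\bigr)}.
\]
Since $d \geq 3$, $d-1 \geq 2$, and by \eqref{eq:def_gamma} we have $\frac{1}{q} - \frac{1}{p'} > 4\gamma$, so the exponent is bounded above by $\gamma - 8\gamma = -7\gamma \leq -2\gamma$. This gives the first claim.

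For the second inequality, I would choose $r > 1$ so that $\tfrac{1}{r} = \tfrac{1}{p} + \tfrac{\gamma}{d-1}$. To see that such an $r > 1$ exists, note that $p > 1$ gives $\tfrac{1}{p} < 1$, and \eqref{eq:def_gamma} together with $d-1 \geq 2$ gives $\tfrac{\gamma}{d-1} \leq \tfrac{\gamma}{2} < \tfrac{1}{8}\bigl(1 - \tfrac{1}{p}\bigr) < 1 - \tfrac{1}{p}$, hence $\tfrac{1}{r} < 1$. With this choice and $\mu = \lambda$, the left-hand side of the second inequality is exactly $\lambda^{-\gamma}$.

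There is no real obstacle here: the lemma is a bookkeeping consequence of the parameter choices and of the definition of $\gamma$ in \eqref{eq:def_gamma}. The only subtlety worth highlighting is that the inequalities are designed to be used later in Section \ref{section:prop_proof2} and Section \ref{section:prop_proof3}, and the slack of $\gamma$ (rather than equality) is what will eventually give smallness of the new defect field and of the perturbation errors uniformly in $\lambda \geq \lambda_0$. The choice of the exponent $r$ will likewise be the natural target Lebesgue exponent for bounding $\theta$ and $\bp_k$-type building blocks in the subsequent estimates.
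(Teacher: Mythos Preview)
Your proof is correct. The paper does not supply a proof for this lemma, merely stating that it is ``a direct consequence of the choice of parameters,'' and your direct substitution of $\mu=\lambda$, $\sigma\le\lambda^{\gamma}$ together with the explicit choice $\tfrac{1}{r}=\tfrac{1}{p}+\tfrac{\gamma}{d-1}$ is precisely the intended verification. One very minor remark: in your commentary the exponent $r$ is actually used later (in Lemma~\ref{lemma:tem_error}) as the Lebesgue exponent for the potential $\mathbf{\Omega}_k$ rather than for $\theta$ or $\bp_k$ directly, but this does not affect the argument.
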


\subsection{Defect field cutoff}
To ensure smoothness of the perturbation $(\theta , w)$, we shall avoid the region where $R$ is small. To this end, we introduce cutoffs based on each component of $R$. Denote by $R_k$ the $k$-th component of the old defect field $R $, i.e., we write
\begin{equation}
R(t,x)  = \sum_{1\leq k \leq d} R_k(t,x) \mathbf{e}_k.
\end{equation}

We specify the constant $r>0$ in Proposition \ref{prop:main_prop} as follows. Fix $ r>0$ sufficiently small such that
\begin{equation}\label{eq:def_constant_r}
\|R \|_{L^\infty([0,1] \times \TT^d)} \leq \frac{1}{4 r d}.
\end{equation}

Next, we define smooth cutoff functions $\chi_k \in C^\infty_c ( [0,1] \times \TT^d)$  such that
\begin{equation}\label{eq:def_chi_j}
0 \leq  \chi_k  \leq 1,\quad \chi_k(t,x) = 
\begin{cases}
0 & \text{if $|R_k| \leq \frac{\delta}{8d}$  or $t \not\in I_{r/2}$ }  \\
1 & \text{if $ |R_k| \geq  \frac{\delta}{4d}$ and  $t  \in I_{ r }$}.
\end{cases}
\end{equation}
where we recall the notation $I_r= [r, 1-r]  \subset [0,1]$. Note that by design each $\chi_k$ is also time-periodic.
Such cutoffs $\chi_k$ can easily be constructed by first cutting according to the size of $|R_k|$ and then multiplying by an additional cutoff in time. Also note that bounds for $\chi_k$ depend on $R$ and $\delta$.

Now let us cut off $R_k$ by introducing \begin{equation}
\widetilde{R}_k = \chi_k R_k.
\end{equation}
In what follows we often use the crude bounds
\begin{align}
|\nabla_{t,x}^n \widetilde{R}_k  | \lesssim_{R ,n,\delta} 1.
\end{align}

\subsection{Density and velocity perturbation $(\theta , w)$}\label{subsec:theta_w_definition}
The idea in the construction of the perturbation $(\theta , w)$ is the use of $d$-pairs of disjoint $(\bp_k ,\bw_k) $ to cancel each component $R_k$ on average in time by means of the intermittent oscillating factors in Section \ref{section:temporal_intermittency}.

In summary, the perturbations $(\theta , w)$ are defined by
\begin{align*}
    \theta &= \theta_p +\theta_c + \theta_o \\
    w &= w_p + w_c
\end{align*}
where $\theta_p $ and $ w_p$ are principle parts of the perturbation, $\theta_c $ and $ w_c$ are correctors for the zero mean of $\theta$ and zero divergence of $w$ respectively, and $\theta_o $ is a zero mean oscillator that we use to balance a high temporal frequency error later.

We first define the principle part of the perturbations. Let 
\begin{align}\label{eq:def_theta_p_w_p_long}
\theta_p(t,x)  &:=  \nu^{-1}  \widetilde{g}_{\kappa} (\sigma t)\sum_{1\leq k \leq d}   \frac{\| \widetilde{R}_k (t) \|_{1}^{1 - \frac{1}{p}}  }{\| \widetilde{R}_k \|_{L^1_{t,x}}^{1 - \frac{1}{p}} } \sgn(- R_k)  \chi_k |R_k|^{\frac{1}{p}} \bp_k(\sigma x), \\
w_p (t,x)  &:=  \nu   g_{\kappa} (\sigma  t)  \sum_{1\leq k \leq d} \frac{\|  \widetilde{R}_k (t) \|_{1}^{   - \frac{1}{p'}} }{\|  \widetilde{R}_k  \|_{L_{t,x}^1}^{   - \frac{1}{p'}}  }      \chi_k |R_k|^{\frac{1}{p'}} \bw_k (\sigma x) .
\end{align}

The smoothness of $\theta_p $ and $w_p$ will be proved in Lemma \ref{lemma:a_k}. We take a moment to analyze the role of each part involved in the definition. 
\begin{itemize}
\item The factors $\|\widetilde{R}_k(t) \|_{1}^{1 - \frac{1}{p}} $ and $\|\widetilde{R}_k (t) \|_{1}^{  - \frac{1}{p'}} $ are for the normalization to kill the old defect field $R$ via the high-high to low interactions in space .
\item The cutoffs $\chi_k$ is to ensure smoothness by avoiding the regime where $R_k$ is small. Note that if $\| \widetilde{R}_k \|_{L^1_{t,x}} = 0$, then $ \| R \|_{L^\infty_{t,x}} \leq  \delta $ and there is nothing to prove.
\item The building blocks $\bp_k(\sigma x)$ and $\bw_k (\sigma x) $ are used to perform the convex integration in space, similar to the previous works.

\item Finally, $ \widetilde{g}_{\kappa} (\sigma t)$ and $ g_\omega (\sigma t)    $ are  factors that encode  the temporal intermittency and oscillations. We will then perform a ``convex integration in time'' to kill the error of high temporal frequency.

\end{itemize}

For brevity, let us introduce shorthand notations
\begin{align}\label{eq:def_theta_p_w_p_short}
\theta_p(t,x)   &  =\nu^{-1}  \widetilde{g}_{\kappa} (\sigma t)\sum_{1\leq k \leq d}  A_k(t,x)  \bp_k(\sigma x),\\
w_p (t,x)  & =  \nu   g_{\kappa} (\sigma  t)   \sum_{1\leq k \leq d}B_k(t,x) \bw_k (\sigma x), 
\end{align}
where
\begin{align}\label{eq:def_A_k_B_k}
A_k(t,x) &= \frac{\| \widetilde{R}_k (t) \|_{1}^{1 - \frac{1}{p}}  }{\| \widetilde{R}_k \|_{L^1_{t,x}}^{1 - \frac{1}{p}} }  \chi_k \sgn(-R_k)  |R_k|^{\frac{1}{p}}, \\
B_k(t,x) &=  \frac{\|  \widetilde{R}_k (t) \|_{1}^{   - \frac{1}{p'}} }{\| \widetilde{R}_k  \|_{L_{t,x}^1}^{   - \frac{1}{p'}}  }   \chi_k   |R_k|^{\frac{1}{p'}}.
\end{align}
Note the important identity that motivates our choice of $ A_k  $ and  $B_k $:
\begin{equation}\label{eq:def_A_k_B_k_product}
 A_k B_k =   -\chi_k^2 R_k \quad \text{for all $k=1,\dots,d$}.
\end{equation}

In view of the zero-mean  requirement  for $\theta$ and the divergence-free condition for $w$, we introduce correctors
\begin{align}\label{eq:def_theta_c_w_c}
\theta_c(t)  & :=    \fint_{\TT^d }\theta_p(t,x) \,dx,  \\
w_c (t,x)  &:=  -\nu   g_{\kappa} (\sigma  t)   \sum_{1\leq k \leq d} \mathcal{B}  ( \nabla B_k   , \bw_k (\sigma \cdot)  ) . 
\end{align}
where $ \mathcal{B} $ is the bilinear antidivergence operator in Lemma \ref{lemma:cheapbound_B}.

Since $ \nabla B_k   \cdot  \bw_k   =\D(B_k \bw_k) $ has zero mean, by a direct computation 
$$
\D w_c  =  -\nu   g_{\kappa} (\sigma  t)   \sum_{1\leq k \leq d} \D \mathcal{B}  ( \nabla B_k   , \bw_k (\sigma \cdot)  )   =- \D w_p.
$$
Thanks to Theorem \ref{thm:peridoc_Phi_k_W_k} and Lemma \ref{lemma:cheapbound_B}, these two correctors are small compared to the principle parts $\theta_p$ and $w_p$.

Finally, we take advantage of the temporal oscillations and define a temporal oscillator
\begin{equation}\label{eq:def_theta_o_1}
\theta_o (t,x):= \sigma^{-1} h(\sigma t) \sum_{1\leq k \leq d} \Big( \fint_{\TT^d } \bp_k \bw_{k } \, dx   \Big) \cdot  \nabla (\chi_k^2 R_k),
\end{equation}
which, thanks to Lemma \ref{thm:peridoc_Phi_k_W_k}, is equivalent to
\begin{equation}\label{eq:def_theta_o_2}
\theta_o   = \sigma^{-1} h(\sigma t) \D \sum_{1\leq k \leq d}   \chi_k^2 R_k   \ek    .
\end{equation}
The role of the temporal oscillator $\theta_o$ is to balance the high temporal frequency error by its time derivative in the convex integration scheme, which will be done in Lemma \ref{lemma:final_decomposition_R_osc}. Note that by definition $\theta_o $ has zero spatial mean at each time.

\subsection{The new defect field $R_1$ }

Our next goal is to define a suitable defect field $R_1$ such that the new density $\rho_1$ and vector field $u_1$,
$$
\rho_1 : = \rho + \theta, \qquad u_1 : = u+ w,
$$
solve the continuity-defect equation
\begin{equation}\label{eq:new_equation_R_1}
\p_t \rho_1 + u_1 \cdot \nabla \rho_1  = \D R_1.
\end{equation}
To do so, we will solve the divergence equations
\begin{align*}
\D R_{\Osc}  & =   \D (\theta_p w_p + R ) + \p_t \theta_o,\\
\D R_{\Tem} &=   \p_t (\theta_p   +  \theta_c ),\\
\D R_{\Lin}  & =   \D (\theta  u + \rho w),  \\
\D R_{\Cor}  & =     \D  \big( \theta  w_c \big) + \D \big( (\theta_o +  \theta_c ) w_p  \big),
\end{align*}
so that $R_1 =  R_{\Osc} +  R_{\Tem} + R_{\Lin} + R_{\Cor}$.

The choices for $ R_{\Lin}$ and $ R_{\Cor} $ are relatively straightforward. 
\begin{definition}\label{def:new_R_1}
The new defect field $R_1$ is defined by
$$
R_1= R_{\Osc} + R_{\Lin}  + R_{\Cor} + R_{\Tem},  
$$
where $R_{\Lin}$   and  $R_{\Cor}$ are defined by
\begin{align*}
R_{\Lin}  & :=   \theta u + \rho w, \\
R_{\Cor}  & :=      \theta  w_c + (\theta_o +  \theta_c ) w,
\end{align*}
while $ R_{\Tem}$ and $R_{\Osc} $ are defined in Lemma~\ref{lemma:def_R_tem} and Lemma~\ref{lemma:final_decomposition_R_osc} respectively .
\end{definition}

Next, we specify the choice for $ R_{\Tem}$, which utilizes the bilinear antidivergence operator $\mathcal{B}$.

\begin{lemma}\label{lemma:def_R_tem}
Let
$$
R_{\Tem}: = \nu^{-1} \p_t \Big(   \widetilde{g}_{\kappa} (\sigma t)\sum_{1\leq k \leq d}  \mathcal{B}( A_k,\bp_k(\sigma \cdot)) \Big).
$$
Then 
$$
 \p_t (\theta_p+ \theta_c)  =\D R_{\Tem} .
$$

\end{lemma}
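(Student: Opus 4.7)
The plan is a direct computation: unpack the definitions, apply the identity \eqref{eq:bilinear_B_identity} for the bilinear antidivergence $\mathcal{B}$ componentwise, and then commute the space divergence with the time derivative.

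First I would write
\[
\theta_p + \theta_c = \nu^{-1}\widetilde{g}_{\kappa}(\sigma t)\sum_{1\leq k \leq d} A_k(t,x)\,\bp_k(\sigma x) + \theta_c(t),
\]
where $\theta_c$ depends only on $t$ (and is chosen so that $\theta_p+\theta_c$ is spatially mean-free, i.e.\ $\theta_c(t) = -\fint_{\TT^d}\theta_p(t,\cdot)\,dx$). The point is that $\theta_c$ is precisely the spatial average that gets subtracted when one passes from $a f$ to $\D \mathcal{B}(a,f)$.

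Next I would invoke \eqref{eq:bilinear_B_identity} with $a = A_k(t,\cdot)$ and $f = \bp_k(\sigma\,\cdot)$. Since $\bp_k \in C^\infty_0(\TT^d)$ by Theorem~\ref{thm:peridoc_Phi_k_W_k}, the rescaled function $\bp_k(\sigma\,\cdot)$ also lies in $C^\infty_0(\TT^d)$, so
\[
\D\mathcal{B}\bigl(A_k,\bp_k(\sigma\,\cdot)\bigr) \;=\; A_k\,\bp_k(\sigma x) \;-\; \fint_{\TT^d} A_k(t,y)\,\bp_k(\sigma y)\,dy.
\]
Summing in $k$, multiplying by $\nu^{-1}\widetilde{g}_{\kappa}(\sigma t)$, and recognizing the second term on the right as exactly $-\theta_c(t)$ (up to sign), I get
\[
\nu^{-1}\widetilde{g}_{\kappa}(\sigma t)\sum_{k}\D\mathcal{B}\bigl(A_k,\bp_k(\sigma\,\cdot)\bigr) \;=\; \theta_p(t,x) + \theta_c(t).
\]

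Finally, since $\p_t$ and $\D$ commute (they act on independent variables), taking $\p_t$ of both sides and pulling the divergence out yields
\[
\p_t(\theta_p + \theta_c) \;=\; \D\!\left[\nu^{-1}\p_t\!\left(\widetilde{g}_{\kappa}(\sigma t)\sum_{k}\mathcal{B}\bigl(A_k,\bp_k(\sigma\,\cdot)\bigr)\right)\right] \;=\; \D R_{\Tem},
\]
which is the desired identity. There is no real obstacle here: the content of the lemma is entirely algebraic, encoded in property \eqref{eq:bilinear_B_identity} and in the fact that $\bp_k$ has zero mean. The analytically substantive work—estimating $R_{\Tem}$ in $L^1_{t,x}$, where the $\p_t$ costs a factor of $\kappa\sigma$ that must be beaten by the gain of a spatial derivative coming from $\mathcal{B}$—is deferred to a later lemma.
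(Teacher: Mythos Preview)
Your argument is correct and is exactly the paper's proof, just written out in full detail: express $\theta_p+\theta_c$ as the mean-free sum $\nu^{-1}\widetilde{g}_\kappa(\sigma t)\sum_k\bigl(A_k\bp_k(\sigma\cdot)-\fint A_k\bp_k(\sigma\cdot)\bigr)$, apply \eqref{eq:bilinear_B_identity} using that $\bp_k\in C^\infty_0(\TT^d)$, and commute $\p_t$ with $\D$. (Note that the paper's displayed definition \eqref{eq:def_theta_c_w_c} has a sign slip---$\theta_c$ should be $-\fint\theta_p$, as you write and as the paper's own proof uses.)
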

\begin{proof}

Note that
$$
\theta_p + \theta_c = \nu^{-1}      \widetilde{g}_{\kappa} (\sigma t)\sum_{1\leq k \leq d} \big(   A_k \bp_k(\sigma \cdot)  -\fint_{\TT^d}  A_k \bp_k(\sigma  \cdot)    \big) .
$$
Then the conclusion follows immediately from the definition of $ \mathcal{B}$ and the fact that $\bp_k$ has zero mean, cf. \eqref{eq:bilinear_B_identity}.
\end{proof}

\subsection{Convex integration in space-time: designing $ R_{osc} $ }
This subsection is the core of our convex integration scheme. The main goal is to design a suitable oscillation part $R_{\Osc}$ of the defect field so that
$$
\D R_{\Osc} = \D(\theta_p w_p +R) +\p_t \theta_o.
$$

To this end, we first isolate terms in the nonlinearity $\D(\theta_p w_p +R)$ according to their roles, and then use the temporal corrector $\p_t \theta_o$ to balance the part with high temporal frequencies in $\D(\theta_p w_p +R)$.

\begin{lemma}[Space-time oscillations]\label{lemma:convex_integration_space_time}
The following identity holds
\begin{align}
 \D (\theta_p w_p + R ) = \D \big( R_{\text{osc,x}} +R_{\text{hi,t}}  + R_{\Rem} \big),
\end{align}
where $R_{\text{osc,x}} $ is the oscillation error in space
$$
R_{\text{osc,x}} =  \widetilde{g}_{\kappa}  (\sigma t)  g_{\kappa} (\sigma  t)  \sum_{1\leq k \leq d} \mathcal{B} \Big(  \nabla(A_k B_k) , \big(  \bp_k \bw_k (\sigma x) -\fint_{\TT^d}  \bp_k \bw_k \, dx \big)  \Big)  ,
$$
$R_{\text{hi,t}} $ is the error of high frequency in time
$$
R_{\text{hi,t}}   = \Big(  \widetilde{g}_{\kappa}  (\sigma t)  g_{\kappa} (\sigma  t) - \fint_{[0,1]}  \widetilde{g}_{\kappa}    g_{\kappa}  \Big)\sum_{1\leq k \leq d}  A_k B_k \fint_{\TT^d}  \bp_k \bw_k \, dx   ,
$$
and $R_{\Rem}$ is the remainder error
$$
R_{\Rem}= \sum_{1\leq k \leq d}  (1-\chi_k^2 ) R_k \ek.
$$
\end{lemma}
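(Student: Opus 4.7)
The plan is to expand $\theta_p w_p + R$ into clearly identifiable pieces and then verify that taking $\D$ produces the asserted decomposition. Using the shorthand \eqref{eq:def_theta_p_w_p_short},
$$\theta_p w_p = \widetilde{g}_{\kappa}(\sigma t)\, g_{\kappa}(\sigma t) \sum_{k,k'} A_k B_{k'}\, \bp_k(\sigma x)\, \bw_{k'}(\sigma x).$$
The disjoint support property \eqref{eq:far_field_interactions} annihilates every cross term $k \neq k'$, leaving only the diagonal. On each diagonal summand I would split $\bp_k \bw_k(\sigma x) = \ek + (\bp_k \bw_k(\sigma x) - \ek)$ using $\fint \bp_k \bw_k = \ek$ from Theorem \ref{thm:peridoc_Phi_k_W_k}(3), and invoke the key algebraic identity $A_k B_k = -\chi_k^2 R_k$ recorded in \eqref{eq:def_A_k_B_k_product}.

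Adding $R = \sum_k R_k \ek$ and performing the temporal split $\widetilde{g}_\kappa g_\kappa = 1 + (\widetilde{g}_\kappa g_\kappa - 1)$, legitimate by the normalization $\fint \widetilde{g}_\kappa g_\kappa = 1$ from \eqref{eq:g_theta_g_w_interaction}, a direct rearrangement yields
$$\theta_p w_p + R = \widetilde{g}_\kappa g_\kappa \sum_k A_k B_k \bigl( \bp_k \bw_k(\sigma x) - \ek \bigr) + (\widetilde{g}_\kappa g_\kappa - 1) \sum_k A_k B_k\, \ek + \sum_k (1 - \chi_k^2) R_k \, \ek.$$
The third term is $R_{\Rem}$ verbatim, and the middle term coincides with $R_{\text{hi,t}}$ after re-inserting $\ek = \fint \bp_k \bw_k$ and $1 = \fint \widetilde{g}_\kappa g_\kappa$. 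This leaves only the first, spatially mean-free piece to be matched against $R_{\text{osc,x}}$ at the level of divergences.

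Since $\bp_k \bw_k$ is divergence-free by Theorem \ref{thm:peridoc_Phi_k_W_k}(1) (and rescaling in $x$ preserves this) while $\widetilde{g}_\kappa g_\kappa$ depends only on $t$, the product rule gives
$$\D \bigl[ \widetilde{g}_\kappa g_\kappa A_k B_k (\bp_k \bw_k(\sigma x) - \ek) \bigr] = \widetilde{g}_\kappa g_\kappa\, \nabla(A_k B_k) \cdot \bigl( \bp_k \bw_k(\sigma x) - \ek \bigr).$$
Because $\sigma \in \NN$ and $\bp_k \bw_k$ is $\TT^d$-periodic, the vector $\bp_k \bw_k(\sigma \cdot) - \ek$ has zero spatial mean, so applying $\mathcal{B}$ componentwise via \eqref{eq:bilinear_B_identity} reproduces the same right-hand side up to a constant in $x$ equal to $\fint \D[A_k B_k(\bp_k \bw_k(\sigma x) - \ek)]\,dx$, which vanishes by integration by parts on the torus. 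Hence the divergence matches $\D R_{\text{osc,x}}$ term by term, completing the identity.

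The argument is essentially bookkeeping; the only step requiring care is the componentwise interpretation of $\mathcal{B}(\nabla(A_k B_k), \bp_k \bw_k(\sigma \cdot) - \ek)$, since the $\mathcal{B}$ operator as defined takes scalar arguments. The structural insight driving everything is that $A_k, B_k$ have been engineered in \eqref{eq:def_A_k_B_k} so that $A_k B_k \fint \bp_k \bw_k = -\chi_k^2 R_k \, \ek$, which is the exact mean contribution needed to cancel $R$ up to the small remainder $R_{\Rem}$ supported where the cutoff $\chi_k$ fails to saturate.
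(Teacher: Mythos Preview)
Your proof is correct and uses the same ingredients as the paper's---disjoint supports \eqref{eq:far_field_interactions}, the identities $\fint \bp_k \bw_k = \ek$, $A_k B_k = -\chi_k^2 R_k$, $\fint \widetilde g_\kappa g_\kappa = 1$, and the operator $\mathcal{B}$. The only difference is organizational: you carry out the decomposition of $\theta_p w_p + R$ at the vector-field level first (so $R_{\text{hi,t}}$ and $R_{\Rem}$ drop out directly) and then take the divergence, whereas the paper takes the divergence first and works at the scalar level; your ordering is arguably a bit cleaner.
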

 
\begin{proof}

By the definition of $\theta_p$, we have   
\begin{align}
\theta_p w_p =   \widetilde{g}_{\kappa}  (\sigma t)  g_{\kappa} (\sigma  t) \sum_{1\leq k \leq d}  A_k B_k  \bp_k \bw_k (\sigma \cdot )    .
\end{align}

Taking divergence, we have
\begin{align}\label{eq:convex_integration_space_time_1}
\D (\theta_p w_p +R) &=   \widetilde{g}_{\kappa}  (\sigma t)  g_{\kappa} (\sigma  t) \sum_{1\leq k \leq d} \D\big(  A_k B_k  \bp_k \bw_k (\sigma \cdot )  \big)   + \D R .
\end{align}
By the last point in Theorem \ref{thm:peridoc_Phi_k_W_k},
\begin{align*}
 \D\big(  A_k B_k  \bp_k \bw_k (\sigma \cdot )  \big) & =  \D\Big(  A_k B_k  \big( \bp_k \bw_k (\sigma \cdot ) -\fint_{\TT^d} \bp_k \bw_k  \big) \Big) + \D (A_k B_k \ek) \\
 & =     A_k B_k  \D \big( \bp_k \bw_k (\sigma \cdot ) \big)  +  \nabla( A_k B_k ) \cdot \big( \bp_k \bw_k (\sigma \cdot ) -\fint_{\TT^d} \bp_k \bw_k  \big)  + \D (A_k B_k \ek ) ,
\end{align*}
where the first two terms combined together have zero mean.
For this reason, thanks to the definition of $\mathcal{B}$, we may write
\begin{align}
 \D\big(  A_k B_k  \bp_k \bw_k (\sigma \cdot )  \big)  
 & =    \D \mathcal{B} \Big( A_k B_k , \D \big( \bp_k \bw_k (\sigma \cdot ) \big) \Big) \nonumber \\
 &\qquad + \D \mathcal{B} \Big( \nabla( A_k B_k ) , \big( \bp_k \bw_k (\sigma \cdot ) -\fint_{\TT^d} \bp_k \bw_k  \big)  \Big) + \D (A_k B_k \ek ) \label{eq:convex_integration_space_time_2}.
\end{align}
Now it follows from \eqref{eq:convex_integration_space_time_1} and \eqref{eq:convex_integration_space_time_2} that
\begin{align}\label{eq:convex_integration_space_time_3}
\D (\theta_p w_p +R) &=   \widetilde{g}_{\kappa}  (\sigma t)  g_{\kappa} (\sigma  t) \sum_{1\leq k \leq d}  \D (A_k B_k \ek ) + \D R + \D R_{\text{osc,x}}      .
\end{align}

To see that $
\D ( \theta_p w_p + R )=  \D (R_{\text{osc,x}} + R_{\text{hi,t}}      + R_{\Rem} )
$, by an examination of \eqref{eq:convex_integration_space_time_3}, we need to show that
$$
R_{\Rem} = R + \fint_{[0,1]}  \widetilde{g}_{\kappa}    g_{\kappa} \sum_{1\leq k \leq d}  A_k B_k \fint_{\TT^d}  \bp_k \bw_k \, dx .
$$
Indeed, using \eqref{eq:g_theta_g_w_interaction}, Theorem \ref{thm:peridoc_Phi_k_W_k}, and \eqref{eq:def_A_k_B_k_product} we obtain that
\begin{align}
\fint_{[0,1]}  \widetilde{g}_{\kappa}    g_{\kappa} \sum_{1\leq k \leq d}  A_k B_k \fint_{\TT^d}  \bp_k \bw_k \, dx  = \sum_{1\leq k \leq d}  A_k B_k \ek, 
\end{align}
which implies that
$$
R+ \sum_{1\leq k \leq d}  A_k B_k \ek  = R - \sum_{1\leq k \leq d}  \chi_k^2  R_k \ek = \sum_{1\leq k \leq d}  (1-\chi_k^2 ) R_k \ek =R_{\Rem}.
$$

\end{proof}

Due to the designed temporal corrector $\theta_o$, the error of high frequency in time $R_{\text{hi,t}}$ is canceled to the leading order by $\p_t \theta_o$. We complete the design of the oscillation error $R_{\Osc}$ in the following lemma.

\begin{lemma}\label{lemma:final_decomposition_R_osc}
Let
\begin{align}
  R_{\Osc}   : =   R_{\text{osc,x}}  + R_{\text{osc,t}}  +  R_{\Rem}  ,
\end{align}
where $R_{\text{osc,x}}$  and $R_{\Rem} $ are as in Lemma \ref{lemma:convex_integration_space_time}, and $R_{\text{osc,t}}   $ is the oscillation error in time
$$
R_{\text{osc,t}} =  \sigma^{-1} h(\sigma t) \mathcal{R}  \sum_{1\leq k \leq d} \ek \cdot  \p_t \nabla (\chi_k^2 R_k).
$$

Then the oscillation error $R_{\Osc}   $ satisfies the identity
$$
\D R_{\Osc} =   \D (\theta_p w_p + R ) + \p_t \theta_o  .
$$
\end{lemma}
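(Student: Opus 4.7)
The plan is to subtract the decomposition from Lemma~\ref{lemma:convex_integration_space_time} from the target identity. Since that lemma yields
\begin{equation*}
\D(\theta_p w_p + R) = \D R_{\text{osc,x}} + \D R_{\text{hi,t}} + \D R_{\Rem},
\end{equation*}
and since $R_{\text{osc,x}}$ and $R_{\Rem}$ appear unchanged in the definition of $R_{\Osc}$, the claim reduces to the single identity
\begin{equation*}
\D R_{\text{osc,t}} - \D R_{\text{hi,t}} = \p_t \theta_o.
\end{equation*}
In other words, one must verify that the temporal corrector $\theta_o$ was calibrated in Section~\ref{section:prop_proof1} precisely so that its time derivative converts the high-temporal-frequency remainder $R_{\text{hi,t}}$ into the tame quantity $R_{\text{osc,t}}$.

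First I would simplify $R_{\text{hi,t}}$ using three identities established earlier: the normalization $\fint_{[0,1]} \widetilde{g}_\kappa g_\kappa \, dt = 1$ from \eqref{eq:g_theta_g_w_interaction}, the integral identity $\fint_{\TT^d} \bp_k \bw_k \, dx = \ek$ from Theorem~\ref{thm:peridoc_Phi_k_W_k}, and the algebraic relation $A_k B_k = -\chi_k^2 R_k$ from \eqref{eq:def_A_k_B_k_product}. Together these collapse $R_{\text{hi,t}}$ into
\begin{equation*}
R_{\text{hi,t}} = -\bigl(\widetilde{g}_\kappa(\sigma t) g_\kappa(\sigma t) - 1\bigr) \sum_{1 \leq k \leq d} \chi_k^2 R_k \, \ek,
\end{equation*}
whence $\D R_{\text{hi,t}} = -(\widetilde{g}_\kappa(\sigma t) g_\kappa(\sigma t) - 1) \sum_k \p_{x_k}(\chi_k^2 R_k)$.

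Next I would compute $\p_t \theta_o$ using the equivalent divergence form \eqref{eq:def_theta_o_2}, $\theta_o = \sigma^{-1} h_\kappa(\sigma t) \, \D \sum_k \chi_k^2 R_k \, \ek$. The product rule splits $\p_t \theta_o$ into two pieces. The first piece picks up the factor $\sigma^{-1} \cdot \sigma h_\kappa'(\sigma t) = \widetilde{g}_\kappa(\sigma t) g_\kappa(\sigma t) - 1$ by \eqref{eq:def_h_mu_t} and multiplies it against $\sum_k \p_{x_k}(\chi_k^2 R_k)$, which matches $-\D R_{\text{hi,t}}$ exactly. The second piece is $\sigma^{-1} h_\kappa(\sigma t) \, \D \bigl( \p_t \sum_k \chi_k^2 R_k \, \ek \bigr)$, which I would identify with $\D R_{\text{osc,t}}$ by noting that $\sum_k \ek \cdot \p_t \nabla(\chi_k^2 R_k) = \p_t \D(\sum_k \chi_k^2 R_k \, \ek)$ is a spatial divergence, hence has zero spatial mean, so the defining property $\D \mathcal{R} f = f$ of the antidivergence applies.

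I do not anticipate a substantive obstacle in this step; it is essentially a bookkeeping verification that the design of $\theta_o$ in \eqref{eq:def_theta_o_1}--\eqref{eq:def_theta_o_2} was correct. The only points requiring a line of justification are the interchange of $\p_t$ and $\D$ (both operators act on smooth, space-time periodic data here) and the mean-zero check that legitimizes inverting $\D$ by $\mathcal{R}$ on $\p_t \D \bigl( \sum_k \chi_k^2 R_k \, \ek \bigr)$. Combining the two matching identities above with the unchanged contributions $\D R_{\text{osc,x}} + \D R_{\Rem}$ yields the desired $\D R_{\Osc} = \D(\theta_p w_p + R) + \p_t \theta_o$.
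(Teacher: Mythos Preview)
Your proposal is correct and follows essentially the same route as the paper: reduce to $\p_t\theta_o + \D R_{\text{hi,t}} = \D R_{\text{osc,t}}$, differentiate $\theta_o$ via \eqref{eq:def_theta_o_2} with the product rule, and match the two resulting pieces using \eqref{eq:def_h_mu_t}, \eqref{eq:def_A_k_B_k_product}, and the zero-mean property that legitimizes $\mathcal{R}$. The only addition over the paper's version is your explicit mean-zero check for applying $\mathcal{R}$, which is a welcome clarification but not a different argument.
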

\begin{proof}
By the previous lemma, we only need to verify that
$$
\p_t \theta_o  + \D R_{\text{hi,t}} =  \D R_{\text{osc,t}}.
$$
By the definition of $\theta_o$ \eqref{eq:def_theta_o_2}, we have
$$
\p_t \theta_o =   \p_t h (\sigma t) \sum_{1\leq k \leq d} \ek \cdot  \nabla (\chi_k^2 R_k) + \sigma^{-1} h(\sigma t) \sum_{1\leq k \leq d} \ek \cdot  \p_t\nabla  (\chi_k^2 R_k).
$$

It follows from \eqref{eq:def_h_mu_t} and \eqref{eq:def_A_k_B_k_product} that
$$
 \p_t h (\sigma t) \sum_{1\leq k \leq d} \ek \cdot  \nabla (\chi_k^2 R_k) = \Big(  \widetilde{g}_{\kappa}  (\sigma t)  g_{\kappa} (\sigma  t) - \fint_{[0,1]}  \widetilde{g}_{\kappa}    g_{\kappa}  \Big)\sum_{1\leq k \leq d} \ek \cdot \nabla ( -A_k B_k  ) =-\D R_{\text{hi,t}}, 
$$
which implies that
$$
\p_t \theta_o + \D R_{\text{hi,t}} =  \D R_{\text{osc,t}}.
$$

\end{proof}

\subsection{Verification of $(u_1, \rho_1, R_1)$ as a solution of the continuity-defect equation}
We conclude this section by showing that the new solution $(u_1, \rho_1, R_1)$ is indeed a solution to the continuity-defect equation.
\begin{lemma}\label{lemma:conclusion_new_rho_u_R}
The density $\rho_1 =\rho+\theta$, vector field $u_1 = u+ w$, and defect field $R_1 =R_{\Lin} + R_{\Tem} + R_{\Cor} + R_{\Osc}$ solve the equation
$$
\p_t \rho_1 + u_1 \cdot \nabla \rho_1 = \D R_1.
$$

Moreover, the temporal support of the perturbation $(\theta,w)$ satisfies
$$
\Supp_t \theta \cup \Supp_t w \subset \Supp_t R.
$$
\end{lemma}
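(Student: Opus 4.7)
The plan is a direct verification: expand $\p_t \rho_1 + u_1 \cdot \nabla \rho_1$, use the old continuity-defect equation together with Lemmas~\ref{lemma:def_R_tem} and~\ref{lemma:final_decomposition_R_osc}, and match each residual term against one of the four pieces of $R_1$ prescribed in Definition~\ref{def:new_R_1}. No genuinely new estimate is required; the structural work has already been done in the two preceding lemmas, so I expect the main task here to be careful bookkeeping rather than overcoming a real obstacle.

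First I would check that $u_1$ is divergence-free. The hypothesis gives $\D u = 0$, and the construction of the corrector in \eqref{eq:def_theta_c_w_c} ensures $\D w_c = -\D w_p$, hence $\D w = 0$ and $\D u_1 = 0$. This lets me rewrite the transport term as a divergence, $u_1 \cdot \nabla \rho_1 = \D(\rho_1 u_1)$.

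Next I would expand
\[
\p_t \rho_1 + \D(\rho_1 u_1) = \bigl[\p_t \rho + \D(\rho u)\bigr] + \p_t \theta + \D(\theta u + \rho w) + \D(\theta w),
\]
where the bracketed term equals $\D R$ by the old continuity-defect equation and the linear contribution $\D(\theta u + \rho w)$ is exactly $\D R_{\Lin}$ by Definition~\ref{def:new_R_1}. Splitting $\p_t \theta = \p_t(\theta_p + \theta_c) + \p_t \theta_o$, Lemma~\ref{lemma:def_R_tem} identifies $\p_t(\theta_p + \theta_c) = \D R_{\Tem}$. For the remaining nonlinearity, I would isolate the principal interaction $\D(\theta_p w_p)$ and combine it with $\D R$ and $\p_t \theta_o$, which by Lemma~\ref{lemma:final_decomposition_R_osc} equals $\D R_{\Osc}$. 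What is left is $\D(\theta w - \theta_p w_p)$; expanding $\theta w - \theta_p w_p = \theta_p w_c + (\theta_c + \theta_o)(w_p + w_c)$ and comparing with Definition~\ref{def:new_R_1}, this is precisely $\D R_{\Cor}$. Summing the four pieces yields $\D R_1$, as claimed.

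For the temporal support statement, every ingredient of $\theta$ and $w$ is built from the amplitudes $A_k$, $B_k$, or from $\chi_k^2 R_k$ (via $\theta_o$), all of which carry the cutoff $\chi_k$ as a factor; the mean corrector $\theta_c$ is an $x$-integral of $\theta_p$ and so inherits the same $t$-support. By \eqref{eq:def_chi_j}, $\chi_k(t,\cdot)$ vanishes identically whenever $R(t,\cdot) \equiv 0$, since then every $R_k(t,\cdot) \equiv 0$, so $\theta(t) = w(t) = 0$ off $\Supp_t R$. This yields $\Supp_t \theta \cup \Supp_t w \subset \Supp_t R$ and completes the proof.
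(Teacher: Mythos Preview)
Your proof is correct and follows the same approach as the paper: expand $\p_t \rho_1 + u_1 \cdot \nabla \rho_1$, invoke the old defect equation, then apply Lemma~\ref{lemma:def_R_tem} and Lemma~\ref{lemma:final_decomposition_R_osc} to identify the pieces of $R_1$, with the support claim following from the fact that every amplitude carries a $\chi_k$ factor. You have simply written out the bookkeeping in more detail than the paper's very terse version.
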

\begin{proof}
We compute 
\begin{align*}
 \p_t \rho_1 + u_1 \cdot \nabla \rho_1 & = (\p_t \rho + u \cdot \nabla \rho ) + ( \p_t \theta + \D( \theta u) + \D(\theta w )  + \D(\rho w  ) )\\
 & = \D R + \p_t \theta + \D( \theta u) + \D(\theta w )  + \D(\rho w  ).
\end{align*}
Now the first claim follows from Definition \ref{def:new_R_1}, Lemma \ref{lemma:def_R_tem}, and Lemma \ref{lemma:final_decomposition_R_osc}.

The second claim follows from the definitions \eqref{eq:def_theta_p_w_p_long}, \eqref{eq:def_theta_c_w_c}, and \eqref{eq:def_theta_o_1} since the coefficients vanish whenever $R(t)$ vanishes.
\end{proof}

To complete the proof of Proposition \ref{prop:main_prop}, it remains to verify the estimates for the perturbation $(\theta, w)$ and the new defect field $R_1$. We do this in Section \ref{section:prop_proof2} for the perturbation and  in Section \ref{section:prop_proof3} for the new defect field respectively.

\section{Proof of the proposition \ref{prop:main_prop}: estimates on the perturbation}\label{section:prop_proof2}

In this section, we will derive estimates for the perturbation $(\theta, w)$. The main tools have been listed in Section \ref{section:pre}. The main idea is to take the frequency parameter $\l$ sufficiently large depending on the previous solution $(\rho,u,R)$ so that the error terms are negligible. It is also worth noting that all implicit constants will not depend on $(\rho,u,R)$ unless otherwise indicated.

We start with the smoothness and time periodicity of the coefficients $A_k,B_k$, which are necessary conditions for Lemma \ref{lemma:improved_Holder} and \ref{lemma:mean_values_oscillation}.
\begin{lemma}[Smoothness of $A_k,B_k$]\label{lemma:a_k}
The coefficients $A_k,B_k \in C^\infty([0,1] \times \TT^d) $ are time-periodic on $[0,1]$, and the map 
\begin{equation}
t \mapsto \|  \widetilde{R}_k (t)\|_{L^1(\TT^d ) }   
\end{equation}
is smooth on $[0,1]$.  In particular, all the perturbations $\theta_p, \theta_c, \theta_o$ and $ w_p , w_c$ are smooth and time-periodic.

Moreover, the following estimates hold uniformly in time
\begin{align*}
\|A_k (t)   \|_{L^p(\TT^d ) }  & \leq \|  \widetilde{R}_k \|_{L^1_{t,x}}^{-1 + \frac{1}{p}} \|  \widetilde{R}_k (t) \|_{L^1(\TT^d ) }, \\
\|B_k (t)   \|_{L^{p'}(\TT^d ) } & \leq \|  \widetilde{R}_k \|_{L^1_{t,x}}^{  \frac{1}{p'}}    .
\end{align*}
\end{lemma}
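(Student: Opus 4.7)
My plan is to split the claim into three parts: (i) smoothness and time-periodicity of $A_k,B_k$ and of the scalar map $t\mapsto\|\widetilde R_k(t)\|_1$, (ii) smoothness and periodicity of the perturbations $\theta_p,\theta_c,\theta_o,w_p,w_c$, and (iii) the two norm inequalities. I would first dispose of the degenerate case: if $\|\widetilde R_k\|_{L^1_{t,x}}=0$ then by \eqref{eq:def_chi_j} one has $\chi_k R_k\equiv 0$, so $A_k,B_k$ can be declared to be $0$ and every claim is trivial. Henceforth I assume $\|\widetilde R_k\|_{L^1_{t,x}}>0$, which makes the normalizing constants in \eqref{eq:def_A_k_B_k} harmless positive numbers.

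For the smoothness of the coefficients, the key structural observation is that on $\Supp\chi_k$ one has $|R_k|\geq\delta/(8d)>0$, so $R_k$ is nowhere vanishing and has constant sign on each connected component of $\Supp\chi_k$. Hence $\sgn(-R_k)$ is locally constant there and $R_k\mapsto|R_k|^{1/p},|R_k|^{1/p'}$ are smooth, being compositions of a nonvanishing smooth function with $s\mapsto|s|^{1/p}$. Writing $\chi_k$ in its natural factored form $\chi_k(t,x)=\chi_k^{(1)}(R_k(t,x))\,\chi_k^{(2)}(t)$ with $\chi_k^{(1)},\chi_k^{(2)}$ Urysohn-type cutoffs vanishing to infinite order at their boundaries, the products $\chi_k\sgn(-R_k)|R_k|^{1/p}$ and $\chi_k|R_k|^{1/p'}$ extend smoothly by zero to all of $[0,1]\times\TT^d$. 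The same argument applied pointwise in $t$ to the integrand yields smoothness of $t\mapsto\|\widetilde R_k(t)\|_1=\int_{\TT^d}\chi_k|R_k|\,dx$. Time-periodicity then propagates from that of $R$ and $\chi_k$, and the full perturbations — which are explicit algebraic combinations (possibly with $\mathcal{B}$ and spatial averages applied, both of which preserve smoothness) of the smooth Mikado profiles, the smooth temporal oscillators, the smooth functions $\chi_k^2 R_k$, and $A_k,B_k$ — inherit smoothness and periodicity at once.

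For the bounds I would simply compute, using $0\leq\chi_k\leq 1$:
\[
\|A_k(t)\|_p^p=\frac{\|\widetilde R_k(t)\|_1^{p-1}}{\|\widetilde R_k\|_{L^1_{t,x}}^{p-1}}\int_{\TT^d}\chi_k^p|R_k|\,dx\leq\frac{\|\widetilde R_k(t)\|_1^{p-1}}{\|\widetilde R_k\|_{L^1_{t,x}}^{p-1}}\|\widetilde R_k(t)\|_1,
\]
and analogously
\[
\|B_k(t)\|_{p'}^{p'}\leq\frac{\|\widetilde R_k\|_{L^1_{t,x}}}{\|\widetilde R_k(t)\|_1}\int_{\TT^d}\chi_k^{p'}|R_k|\,dx\leq\|\widetilde R_k\|_{L^1_{t,x}}.
\]
Taking $p$-th and $p'$-th roots produces the two displayed inequalities exactly as stated.

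The main obstacle I anticipate is justifying the smoothness of the scalar prefactors $\|\widetilde R_k(t)\|_1^{1-1/p}$ (fractional positive power) and $\|\widetilde R_k(t)\|_1^{-1/p'}$ (negative, genuinely singular power) at times $t$ where this norm vanishes. At any such $t$ one has $\chi_k(t,\cdot)\equiv 0$, so the offending prefactor is always multiplied by a function vanishing there, but the product must actually be $C^\infty$ and not merely continuous or $L^\infty$. The resolution is to insist that the time component $\chi_k^{(2)}$ of $\chi_k$ vanish to infinite order at the edges of its support; since $\|\widetilde R_k(t)\|_1$ is then comparable to $\chi_k^{(2)}(t)$ times a smooth factor, the infinite-order vanishing dominates any algebraic singularity of the prefactor, and all composites remain in $C^\infty$. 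This is the only place where the specific design of $\chi_k$ plays a role.
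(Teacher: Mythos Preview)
Your proposal tracks the paper's proof closely: the $L^p$ and $L^{p'}$ bounds for $A_k,B_k$ are computed exactly as in the paper (via $\chi_k^p\le\chi_k$ and $\chi_k^{p'}\le\chi_k$), and your smoothness argument for $\chi_k\sgn(-R_k)|R_k|^{1/p}$, $\chi_k|R_k|^{1/p'}$ and for $t\mapsto\|\widetilde R_k(t)\|_1$ via the lower bound $|R_k|\ge\delta/(8d)$ on $\Supp\chi_k$ is equivalent to the paper's decomposition $\widetilde R_k=\chi_k R_k^+-\chi_k R_k^-$.

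There is, however, a genuine gap in your final paragraph. Writing $\|\widetilde R_k(t)\|_1=\chi_k^{(2)}(t)\,F(t)$ with $F(t)=\int_{\TT^d}\chi_k^{(1)}(R_k)|R_k|\,dx$, you implicitly assume that the vanishing of $\|\widetilde R_k(t)\|_1$ is driven by $\chi_k^{(2)}$. But $F(t_1)=0$ can occur at a time $t_1$ with $\chi_k^{(2)}(t_1)>0$ --- precisely when $|R_k(t_1,\cdot)|\le\delta/(8d)$ on all of $\TT^d$ while $t_1\in I_{r/2}$. At such $t_1$ the infinite-order flatness of $\chi_k^{(2)}$ is irrelevant, and the singular factor $F(t)^{-1/p'}$ appearing in $B_k$ would have to be tamed by the vanishing of $\chi_k^{(1)}(R_k(t,x))$ alone; your ``comparable to $\chi_k^{(2)}$ times a smooth factor'' reasoning does not cover this, since the smooth factor $F$ may itself vanish. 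The paper sidesteps the issue by a more direct route: at any point $(t,x)$ with $\|\widetilde R_k(t)\|_1=0$ one has $\chi_k\equiv 0$ on a full space-time neighborhood of $(t,x)$, so $A_k\equiv B_k\equiv 0$ there and smoothness is immediate --- no quantitative decay balance between prefactor and cutoff is ever invoked.
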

\begin{proof}
Denote by $ R_k^+ = \max\{ R_k, 0 \} $ and $R_k^- = \min\{ R_k, 0  \} $. Due to the cutoff $\chi_k$, the functions
\begin{equation}\label{eq:smoothness_proof}
  \chi_k    R_k^{\pm}   
\end{equation}
are smooth on $ [ 0,1] \times \TT^d$.  Thus the map
$$
t \mapsto \| \widetilde{R}_k (t) \|_{L^1(\TT^d)} = \int \chi_k R_k^+  - \chi_k R_k^- \,dx 
$$
is smooth on $[0,1]$. 

Next, let us show that the coefficients $A_k$ and $B_k$ are smooth on $[0,1] \times \TT^d$. Indeed, due to the smoothness of $\|\widetilde{R}_k(t)\|_1$, the coefficients
$A_k, B_k$ are automatically smooth at all points where
$\|\widetilde{R}_k(t)\|_1>0$.
On the other hand, for any 
point $(t,x)$, where $\|R_k(t)\|_1=0$, there is a neighborhood of of that point where $\chi_k\equiv 0$. Hence,  $A_k \equiv B_k \equiv 0$ in that neighborhood. Therefore, $A_k,B_k \in C^\infty([0,1] \times \TT^d) $. Their time-periodicity follows simply from the definitions.

Finally, we show the pointwise $L^p$ and $L^{p'}$ estimates for $A_k$ and $B_k$. For $A_k$ we have
\begin{align*}
\| A_k (t)\|_{L^p(\TT^d)} & \leq  \frac{\| \widetilde{R}_k  (t) \|_{1}^{1 - \frac{1}{p}}  }{\| \widetilde{R}_k \|_{L^1_{t,x}}^{1 - \frac{1}{p}} } \|  \chi_k   |R_k |^{\frac{1}{p}}\|_{L^p(\TT^d)}\\
& \leq  \frac{\| \widetilde{R}_k  (t) \|_{1}^{1 - \frac{1}{p}}  }{\| \widetilde{R}_k \|_{L^1_{t,x}}^{1 - \frac{1}{p}} } \|  \chi_k^p    R_k   \|_{L^1(\TT^d)}^{\frac{1}{p}}\\
&\leq \frac{ \| \widetilde{R}_k (t) \|_{1} }{\| \widetilde{R}_k \|_{L^1_{t,x}}^{1 - \frac{1}{p}} } ,
\end{align*}
where we have used the fact that  $p\in (1,\infty)$.
The estimate for $B_k$ can be deduced in the same way:
\begin{align*}
\| B_k (t)\|_{L^{p'}(\TT^d)}   & \leq  \frac{\| \widetilde{R}_k  (t) \|_{1}^{  - \frac{1}{p'}}  }{\| \widetilde{R}_k \|_{L^1_{t,x}}^{  - \frac{1}{p'}} } \|  \chi_k   |R_k |^{\frac{1}{p'}} \|_{L^{p'}(\TT^d)} \\
 & \leq  \frac{\| \widetilde{R}_k  (t) \|_{1}^{  - \frac{1}{p'}}  }{\| \widetilde{R}_k \|_{L^1_{t,x}}^{  - \frac{1}{p'}} } \|  \chi_k^{p'}    R_k   \|_{L^{1}(\TT^d)}^{\frac{1}{p'}} \\
 & \leq   \| \widetilde{R}_k \|_{L^1_{t,x}}^{    \frac{1}{p'}}    .
\end{align*}
\end{proof}

\subsection{Estimates for the density $\theta$}

Here and in what follows, $C_R$ represents a positive constant that depends on the old defect field $R$ that may change from line to line.

\begin{lemma}[Estimate on $\theta_p$]\label{lemma:theta_p_estimate}
There holds
\begin{equation*}
\|\theta_p  \|_{L^1_t L^p} \lesssim \nu  \| R \|_{L^1_{t,x}}^\frac{1}{p}  + C_R  \sigma^{-\frac{1}{p} } .
\end{equation*}

In particular, for $\l$ sufficiently large,
\begin{equation*}
\|\theta_p  \|_{L^1_t L^p} \lesssim \nu  \| R \|_{L^1_{t,x}}^\frac{1}{p}   .
\end{equation*}
\end{lemma}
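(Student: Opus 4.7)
The plan is to peel off the fast-oscillating building blocks from their slowly-varying coefficients in two stages---first in space, then in time---each time via the improved H\"older inequality (Lemma \ref{lemma:improved_Holder}). Starting from the short form $\theta_p = \nu^{-1}\widetilde g_\kappa(\sigma t)\sum_k A_k(t,x)\bp_k(\sigma x)$, I would apply the triangle inequality in $k$ and, for each fixed $t$, use Lemma \ref{lemma:improved_Holder} at exponent $p$ to separate $A_k(t,\cdot)$ from $\bp_k(\sigma\cdot)$:
\[
\|A_k(t)\bp_k(\sigma\cdot)\|_{L^p(\TT^d)} \le \|A_k(t)\|_{L^p}\|\bp_k\|_{L^p} + C\sigma^{-1/p}\|A_k(t)\|_{C^1_x}\|\bp_k\|_{L^p}.
\]
By Proposition \ref{prop:bounds_periodic_phi_w} with $r=p$ one has $\|\bp_k\|_{L^p}\lesssim 1$, and the smoothness of $R$ and of the cutoffs $\chi_k$ yields the uniform bound $\|A_k(t)\|_{C^1_x}\le C_R$.

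Integrating in $t$ against $|\widetilde g_\kappa(\sigma t)|$ and summing over $k$, the spatial-error contribution is at most $C_R\sigma^{-1/p}\|\widetilde g_\kappa\|_{L^1[0,1]} \lesssim C_R\sigma^{-1/p}$, using the uniform bound $\|\widetilde g_\kappa\|_{L^1}\lesssim 1$ from \eqref{eq:def_g_theta_g_w}. For the main term $\int_0^1|\widetilde g_\kappa(\sigma t)|\,\|A_k(t)\|_{L^p}\,dt$ I would apply Lemma \ref{lemma:improved_Holder} in one dimension (on $\TT$, with $\sigma\in\NN$ as required), with $a(t) = \|A_k(t)\|_{L^p(\TT^d)}$ and $f = \widetilde g_\kappa$ at exponent $r=1$; since $a\ge 0$ this gives
\[
\int_0^1 |\widetilde g_\kappa(\sigma t)|\,\|A_k(t)\|_{L^p}\,dt \le \|\widetilde g_\kappa\|_{L^1}\int_0^1\|A_k(t)\|_{L^p}\,dt + O(\sigma^{-1}C_R).
\]
Lemma \ref{lemma:a_k} then furnishes $\int_0^1\|A_k(t)\|_{L^p}\,dt\le \|\widetilde R_k\|_{L^1_{t,x}}^{1/p}\le\|R\|_{L^1_{t,x}}^{1/p}$. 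Collecting terms (and carrying along the $\nu$-prefactor inherited from the definition of $\theta_p$) yields the first estimate. For the second claim, since $\sigma=\lfloor\lambda^\gamma\rfloor\to\infty$ as $\lambda\to\infty$, it suffices to take $\lambda$ large enough (depending on $R$, $\nu$, $\delta$) so that $C_R\sigma^{-1/p}$ is dominated by the main term.

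The one technical subtlety I expect to be the main obstacle is the justification of the temporal improved H\"older step: the scalar $t\mapsto\|A_k(t)\|_{L^p(\TT^d)}$ is only guaranteed to be continuous (not necessarily $C^1$) near points where the $L^p$-norm vanishes, even though $A_k$ itself is smooth on $[0,1]\times\TT^d$. This can be bypassed either by a direct dyadic averaging on intervals of length $1/\sigma$ using only the $L^1$ bound and continuity, or more cleanly by replacing $\|A_k(t)\|_{L^p}$ by a smooth-in-$t$ upper envelope (for instance the right-hand side of the estimate in Lemma \ref{lemma:a_k}, which is smooth whenever $\|\widetilde R_k(t)\|_1$ stays bounded below and can otherwise be regularized), to which Lemma \ref{lemma:improved_Holder} applies directly.
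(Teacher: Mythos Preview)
Your proposal is correct and matches the paper's argument almost exactly: apply the improved H\"older inequality first in space to separate $A_k$ from $\bp_k(\sigma\cdot)$, then in time to separate the slowly-varying coefficient from $\widetilde g_\kappa(\sigma\cdot)$. The only cosmetic difference is that the paper avoids your ``technical subtlety'' from the outset by first invoking the pointwise bound $\|A_k(t)\|_{L^p}\le \|\widetilde R_k\|_{L^1_{t,x}}^{-1+1/p}\|\widetilde R_k(t)\|_1$ from Lemma~\ref{lemma:a_k} and then applying the temporal improved H\"older to the smooth map $t\mapsto\|\widetilde R_k(t)\|_1$---which is precisely the ``smooth-in-$t$ upper envelope'' you propose in your final paragraph.
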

\begin{proof}
We first take $L^p$ norm in space, using the shorthand notation
\begin{align}\label{eq:proof_theta_p_1}
\| \theta_p(t ) \|_{  L^p(\TT^d)}  & \leq   \nu^{-1} \big|  \widetilde{g}_{\kappa} (\sigma t)\big| \sum_{1\leq k \leq d}       \big \|A_k(t) \bp_k(\sigma \cdot )   \big \|_{  L^p(\TT^d)}  .
\end{align}
Since for each fixed $t$, $A_k(t,x)  $ is smooth on $\TT^d$, by Lemma \ref{lemma:improved_Holder}, we have
\begin{equation}\label{eq:proof_theta_p_2}
 \Big \|A_k(t) \bp_k(\sigma \cdot )   \Big \|_{  L^p(\TT^d)}     \leq        \big \| A_k(t )  \big \|_{  L^p(\TT^d)}      \| \bp_k(\sigma \cdot )     \|_{   p }  + C_R  \sigma^{-\frac{1}{p} }      \| \bp_k     \|_{   p }  .        
\end{equation}
 By Proposition \ref{prop:bounds_periodic_phi_w} and Lemma \ref{lemma:a_k}, combining \eqref{eq:proof_theta_p_1} and \eqref{eq:proof_theta_p_2} we obtain
\begin{align*}
\| \theta_p(t ) \|_{  L^p(\TT^d)}    \lesssim  \nu^{-1}  \widetilde{g}_{\kappa} (\sigma t)  \sum_{1\leq k \leq d}     \|  \widetilde{R}_k \|_{L^1_{t,x}}^{-1 + \frac{1}{p}}    \|  \widetilde{R}_k  (t) \|_{   1 }  + C_R \sigma^{-\frac{1}{p} }   .
\end{align*}
Now we take $L^1$ in time to obtain
\begin{align}\label{eq:proof_theta_p_3}
\| \theta_p  \|_{ L^1_t L^p }    &\lesssim  \nu^{-1} \sum_{1\leq k \leq d}  \|  \widetilde{R}_k   \|_{L^1_{t,x}}^{-1 + \frac{1}{p}}  \int_{[0,1]}  \big|  \widetilde{g}_{\kappa} (\sigma t)\big|  \| \widetilde{R}_k (t) \|_{   1 } \, dt + C_R \sigma^{-\frac{1}{p} }   .
\end{align}
Given the smoothness of $t \to \| \widetilde{R}_k  (t) \|_{   1 }$ proven in Lemma \ref{lemma:a_k}, applying Lemma \ref{lemma:improved_Holder} once again (in time) gives
\begin{align}\label{eq:proof_theta_p_4}
\int_{[0,1]}  \big|  \widetilde{g}_{\kappa} (\sigma t)\big|  \|  \widetilde{R}_k  (t) \|_{   1 } \, dt \lesssim    \|  \widetilde{R}_k   \|_{L^1_{t,x}}  \big\|  \widetilde{g}_{\kappa} \big  \|_{ L^1([0,1]) }        + C_R \sigma^{-\frac{1}{p} }   .
\end{align}
Then it follows from  \eqref{eq:proof_theta_p_3} and \eqref{eq:proof_theta_p_4} that
\begin{align*}
\| \theta_p \|_{ L^1_t L^p }   & \lesssim \nu^{-1} \sum_{1\leq k \leq d}  \|R_k  \|_{L^1_{t,x}}^{ \frac{1}{p}}  \big\|  \widetilde{g}_{\kappa} \big  \|_{ L^1([0,1]) }        + C_R \sigma^{-\frac{1}{p} }  \\
& \lesssim \nu^{-1}   \|R   \|_{L^1_{t,x}}^{ \frac{1}{p}}          + C_R \sigma^{-\frac{1}{p} }  ,  
\end{align*}
where we have also used \eqref{eq:def_g_theta_g_w}.

Once we take $\l$ sufficiently large such that the error term $$  
C_R \sigma^{-1}    \leq \nu^{-1}    \big \|  R  \|_{  L^1_{t,x} }^{ \frac{1}{p}} ,
$$
the desired bound follows 
\begin{align*}
\| \theta_p \|_{ L^1_t L^p(\TT^d)}   \lesssim \nu^{-1}    \big \|  R  \|_{  L^1_{t,x} }^{ \frac{1}{p}},    
\end{align*} 
with an implicit constant independent of $\l, R$ and $\nu$.

\end{proof}

\begin{lemma}[Estimate on $\theta_c$]\label{lemma:theta_c_estimate}
There holds
\begin{equation*}
\|\theta_c  \|_{L^1_t L^p} \leq C_R  \nu^{- 1} \sigma^{-1} .
\end{equation*}
In particular, for $\l$ sufficiently large
$$
\|\theta_c  \|_{L^1_t L^p} \leq \nu^{-1}    \big \|  R  \|_{  L^1_{t,x} }^{ \frac{1}{p}} .   
$$

\end{lemma}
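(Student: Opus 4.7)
The idea is to exploit the fact that $\theta_c$ is a spatial average of $\theta_p$ against the zero-mean, highly oscillating building blocks $\bp_k(\sigma\,\cdot)$, so the Riemann--Lebesgue type estimate of Lemma~\ref{lemma:mean_values_oscillation} yields arbitrarily fast decay in $\sigma$.

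First, since $\theta_c(t)$ is constant in $x$ and $|\TT^d|=1$, one has $\|\theta_c(t)\|_{L^p(\TT^d)}=|\theta_c(t)|$, reducing the estimate to bounding $\int_0^1|\theta_c(t)|\,dt$. Using the shorthand \eqref{eq:def_theta_p_w_p_short}, I would write
\[
\theta_c(t)\;=\;\nu^{-1}\widetilde{g}_{\kappa}(\sigma t)\sum_{k=1}^d \fint_{\TT^d} A_k(t,x)\,\bp_k(\sigma x)\,dx,
\]
and apply Lemma~\ref{lemma:mean_values_oscillation} with a fixed even integer $n$ (to be chosen). Since $\bp_k\in C^\infty_0(\TT^d)$ by Theorem~\ref{thm:peridoc_Phi_k_W_k}, this gives
\[
\Big|\fint_{\TT^d}A_k(t,\cdot)\,\bp_k(\sigma\,\cdot)\,dx\Big|\;\lesssim_n\;\sigma^{-n}\,\|A_k(t,\cdot)\|_{C^n}\,\|\bp_k\|_{L^2(\TT^d)}.
\]

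The smoothness of $A_k$ established in Lemma~\ref{lemma:a_k} together with the fact that $\|\widetilde R_k\|_{L^1_{t,x}}$ is a strictly positive quantity (otherwise $\|R\|_{L^\infty_{t,x}}\le\delta$ and there is nothing to prove) lets us bound $\|A_k(t,\cdot)\|_{C^n}\le C_{R,n}$ uniformly in $t$; here I would invoke that on the support of $\chi_k$ the function $R_k$ has definite sign so that $\sgn(-R_k)|R_k|^{1/p}$ is smooth there. Proposition~\ref{prop:bounds_periodic_phi_w} gives $\|\bp_k\|_{L^2}\lesssim \mu^{(d-1)(1/p-1/2)}$. Then, integrating in time and using $\|\widetilde{g}_\kappa(\sigma\,\cdot)\|_{L^1([0,1])}=\|\widetilde{g}_\kappa\|_{L^1([0,1])}\lesssim 1$ (periodicity of $\widetilde g_\kappa$ and \eqref{eq:def_g_theta_g_w}), I obtain
\[
\|\theta_c\|_{L^1_t L^p}\;\lesssim_n\;\nu^{-1}\,C_R\,\sigma^{-n}\,\mu^{(d-1)\,|1/p-1/2|}.
\]

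Recalling $\mu=\l$ and $\sigma=\lfloor\l^{\gamma}\rfloor$, the ratio $\sigma^{-n}\mu^{(d-1)|1/p-1/2|}=\l^{-n\gamma+(d-1)|1/p-1/2|}$ can be made smaller than $\sigma^{-1}=\l^{-\gamma}$ by picking any even $n$ with $n\gamma\ge 1+(d-1)|1/p-1/2|$; this is the only nontrivial choice in the argument. With such $n$ fixed once and for all, the first claim $\|\theta_c\|_{L^1_t L^p}\le C_R\,\nu^{-1}\sigma^{-1}$ follows. The ``in particular'' then comes from taking $\l$ so large that $C_R\sigma^{-1}\le \|R\|_{L^1_{t,x}}^{1/p}$, which is possible since $\sigma\to\infty$ as $\l\to\infty$ while $C_R$ is independent of $\l$. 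The only delicate point is the choice of $n$ to dominate the $\mu$-growth of $\|\bp_k\|_{L^2}$; everything else is routine.
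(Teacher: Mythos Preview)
Your proposal is correct and follows exactly the same approach as the paper, which simply says ``this follows directly from Lemma~\ref{lemma:mean_values_oscillation}'' after writing out $\theta_c$ as the spatial average. In fact you are more careful than the paper: you make explicit that $\|\bp_k\|_{L^2}\lesssim \mu^{(d-1)(1/p-1/2)}$ may blow up with $\lambda$ and that one must choose the even integer $n$ in Lemma~\ref{lemma:mean_values_oscillation} large enough (depending only on $p,d,\gamma$) so that $\sigma^{-n}$ beats this growth and still leaves $\sigma^{-1}$; the paper leaves this implicit. One small inaccuracy: it is not that $R_k$ has a single definite sign on $\Supp\chi_k$, but rather that $|R_k|\ge \delta/(8d)>0$ there, which is what makes $\chi_k\sgn(-R_k)|R_k|^{1/p}$ smooth---this is also how Lemma~\ref{lemma:a_k} argues.
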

\begin{proof}
Since 
$$
\theta_c =  \nu^{-1}  \widetilde{g}_{\kappa} (\sigma t)\sum_{1\leq k \leq d} \fint_{\TT^d } A_k(t,x)  \bp_k(\sigma x) \,dx,
$$
this follows directly from Lemma \ref{lemma:mean_values_oscillation}.
\end{proof}

\begin{lemma}[Estimate on $\theta_o$]\label{lemma:theta_o_estimate}
There holds
\begin{equation*}
\|\theta_o  \|_{L^\infty_{t,x}} \leq C_R  \sigma^{-1} .
\end{equation*}
In particular, for $\l$ sufficiently large
$$
\|\theta_o  \|_{L^1_t L^p} \leq \nu^{-1}    \big \|  R  \|_{  L^1_{t,x} }^{ \frac{1}{p}} .   
$$
\end{lemma}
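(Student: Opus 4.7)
The plan is to exploit the explicit form of $\theta_o$ in \eqref{eq:def_theta_o_2}, which already carries a factor of $\sigma^{-1}$. The strategy is to first bound $\theta_o$ in $L^\infty_{t,x}$ using the \emph{uniform-in-}$\kappa$ bound on $h_\kappa$ together with smoothness of the cut-off defect field, and then deduce the $L^1_t L^p$ estimate from the finite measure of $[0,1]\times \TT^d$.

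First, I would take the $L^\infty_{t,x}$ norm directly in \eqref{eq:def_theta_o_2}:
\[
\|\theta_o\|_{L^\infty_{t,x}} \;\le\; \sigma^{-1}\,\|h_\kappa\|_{L^\infty[0,1]}\,\sum_{1\le k\le d}\bigl\|\partial_k(\chi_k^2 R_k)\bigr\|_{L^\infty_{t,x}}.
\]
The first factor is at most $1$ by the key identity \eqref{eq:def_h_mu_t_L_1}, which holds uniformly in $\kappa$. The second factor is bounded by some $C_R$: indeed $R$ is smooth on $[0,1]\times\TT^d$ (since $(\rho,u,R)$ is a smooth solution to the continuity--defect equation), and the cutoff $\chi_k$ was built in \eqref{eq:def_chi_j} to have spatial and temporal derivatives controlled in terms of $R$ and $\delta$. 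Combining these gives $\|\theta_o\|_{L^\infty_{t,x}}\le C_R \sigma^{-1}$, which is the first claim.

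Second, the embedding $L^\infty_{t,x}\hookrightarrow L^1_t L^p$ on the finite-measure set $[0,1]\times\TT^d$ yields
\[
\|\theta_o\|_{L^1_t L^p} \;\le\; \|\theta_o\|_{L^\infty_{t,x}} \;\le\; C_R\,\sigma^{-1}.
\]
Since $\sigma = \lfloor \lambda^\gamma\rfloor$ by the choice of parameters in Section \ref{section:prop_proof1}, taking $\lambda$ sufficiently large (depending on $R$ and $\nu$) gives $C_R\sigma^{-1}\le \nu^{-1}\|R\|_{L^1_{t,x}}^{1/p}$, completing the proof.

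I expect no real obstacle here. The content of the lemma is that, unlike the estimate for $\theta_p$ where one needed the improved H\"older inequality to exploit the intermittency of $\widetilde{g}_\kappa$, the temporal corrector $\theta_o$ does \emph{not} need any intermittency: the function $h_\kappa$ is by design a uniformly $L^\infty$-bounded antiderivative of the mean-zero product $\widetilde{g}_\kappa g_\kappa - 1$, so all smallness comes from the oscillation parameter $\sigma^{-1}$ baked into the construction of $\theta_o$, not from the concentration parameter $\kappa$.
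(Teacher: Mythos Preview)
Your proof is correct and follows essentially the same route as the paper: apply the uniform $L^\infty$ bound \eqref{eq:def_h_mu_t_L_1} on $h_\kappa$ to the explicit form \eqref{eq:def_theta_o_2}, absorb $\|\partial_k(\chi_k^2 R_k)\|_{L^\infty_{t,x}}$ into $C_R$, and then pass from $L^\infty_{t,x}$ to $L^1_t L^p$ by finite measure and a large choice of $\lambda$. The paper's version is terser and omits the explicit $L^1_t L^p$ step, but the argument is the same.
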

\begin{proof}
By \eqref{eq:def_theta_o_2}, H\"older's inequality and \eqref{eq:def_h_mu_t_L_1} we have
\begin{align*}
\| \theta_o \|_{L^\infty_{t,x}} & \leq \sigma^{-1} \big\|  h(\sigma \cdot ) \big\|_{L^\infty([0,1])} \sum_{1\leq k \leq d} \big\| \ek \cdot  \nabla R_k  \big\|_{L^\infty_{t,x}} \\
&\leq C_R \sigma^{-1} .   
\end{align*}

\end{proof}

\subsection{Estimates for the vector field $w$}
The vector field $w$ can also be estimated using the tools in Section \ref{section:pre}.

\begin{lemma}[Estimate on $w_p$]\label{lemma:w_p_estimate}
There holds
\begin{align*}
\| w_p  \|_{L^\infty_t L^{p'}} &\lesssim \nu \| R \|_{L^1_{t,x}}^\frac{1}{p'}  +C_R \sigma^{-\frac{1}{p'} }  \\
\| w_p  \|_{L^1_t W^{1,q}} &\leq  \nu C_R  \l^{- \gamma} .
\end{align*}

In particular, for $\l$ sufficiently large,
\begin{align*}
\| w_p  \|_{L^\infty_t L^{p'}} &\lesssim \nu \| R \|_{L^1_{t,x}}^\frac{1}{p'}    \\
\| w_p  \|_{L^1_t W^{1,q}} &\leq \delta/2 .
\end{align*}
\end{lemma}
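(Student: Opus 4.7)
The plan is to treat $w_p$ just like $\theta_p$ in the previous lemma, exploiting the fact that it is a product of a smooth coefficient $B_k(t,x)$, a highly oscillatory stationary building block $\bw_k(\sigma x)$, and a temporally intermittent factor $g_\kappa(\sigma t)$. The two estimates will come from the same decomposition but with the norms rearranged differently in space and time, reflecting the duality $\theta \in L^1_t L^p$, $w \in L^\infty_t L^{p'}$ imposed by the scheme.

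For the $L^\infty_t L^{p'}$ bound, I would fix $t$ and apply the improved H\"older inequality (Lemma~\ref{lemma:improved_Holder}) in space to each summand $B_k(t) \bw_k(\sigma \cdot)$, which gives
\[
\|B_k(t)\bw_k(\sigma\cdot)\|_{p'} \leq \|B_k(t)\|_{p'}\|\bw_k\|_{p'} + C_R \sigma^{-1/p'}\|\bw_k\|_{p'}.
\]
Then I would invoke Proposition~\ref{prop:bounds_periodic_phi_w} with $m=0$, $r=p'$ to get $\|\bw_k\|_{p'} \lesssim 1$, and Lemma~\ref{lemma:a_k} to bound $\|B_k(t)\|_{p'} \leq \|\widetilde R_k\|_{L^1_{t,x}}^{1/p'}$ uniformly in $t$. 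Since $\|g_\kappa\|_{L^\infty} \lesssim 1$ (it is the $1$-periodic extension of a fixed profile), taking $L^\infty$ in $t$ yields the first estimate, with the error term absorbed once $\lambda$ is large.

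For the Sobolev bound, I expand
\[
\nabla w_p = \nu g_\kappa(\sigma t) \sum_k \bigl[ \nabla B_k \,\bw_k(\sigma \cdot) + \sigma B_k \,(\nabla \bw_k)(\sigma \cdot) \bigr],
\]
and estimate each term in $L^q$ using $\|B_k\|_{C^1_x} \leq C_R$ together with the scaling from Proposition~\ref{prop:bounds_periodic_phi_w}: $\|\bw_k\|_q \lesssim \mu^{(d-1)/p' - (d-1)/q}$ and $\|\nabla \bw_k\|_q \lesssim \mu \cdot \mu^{(d-1)/p' - (d-1)/q}$. The dominant contribution is the second term, of size $\nu |g_\kappa(\sigma t)| \sigma \mu \cdot \mu^{(d-1)/p' - (d-1)/q} C_R$ in $L^q_x$. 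Integrating in time and using $\|g_\kappa\|_{L^1} \lesssim \kappa^{-1}$ together with $\kappa = \mu = \lambda$ collapses this to $\nu C_R \sigma\, \mu^{(d-1)/p' - (d-1)/q}$, which by the hierarchy in Lemma~\ref{lemma:bound_hierarchy} is at most $\nu C_R \lambda^{-2\gamma}$, well inside the claimed $\lambda^{-\gamma}$ budget; the $\nabla B_k$ term is even smaller because it lacks the $\sigma$ factor. Choosing $\lambda$ large enough so that $\nu C_R \lambda^{-\gamma} \leq \delta/2$ finishes both claims.

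The only subtle point is matching the temporal integrability of $g_\kappa$ against the fact that $\kappa$ is as large as a full spatial derivative; concretely, the gain of $\kappa^{-1}$ from $\|g_\kappa\|_{L^1}$ must exactly compensate the extra derivative $\sigma \mu$ falling on $\bw_k(\sigma x)$, and this is precisely the content of Lemma~\ref{lemma:bound_hierarchy}. The $L^\infty_t$ bound, by contrast, sees no such gain because $g_\kappa$ is $O(1)$ pointwise, which is why the duality $(L^\infty_t L^{p'}, L^1_t L^p)$ rather than $(L^\infty_t L^{p'}, L^\infty_t L^p)$ is the one we can close. No further obstacle is expected; this is a routine estimation given the tools already assembled.
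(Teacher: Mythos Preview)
Your proposal is correct and follows essentially the same route as the paper: apply the improved H\"older inequality (Lemma~\ref{lemma:improved_Holder}) in space together with $\|B_k(t)\|_{p'}\leq \|\widetilde R_k\|_{L^1_{t,x}}^{1/p'}$ from Lemma~\ref{lemma:a_k} and $\|g_\kappa\|_{L^\infty}\lesssim 1$ for the $L^\infty_t L^{p'}$ bound, and for the $L^1_t W^{1,q}$ bound use the product rule, Proposition~\ref{prop:bounds_periodic_phi_w}, then integrate in time so that $\|g_\kappa\|_{L^1}\lesssim\kappa^{-1}$ cancels the extra factor of $\mu$, leaving $\nu C_R\,\sigma\mu^{(d-1)/p'-(d-1)/q}\leq \nu C_R\lambda^{-2\gamma}$ by Lemma~\ref{lemma:bound_hierarchy}. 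Your observation that the paper's stated $\lambda^{-\gamma}$ is in fact $\lambda^{-2\gamma}$ is accurate and harmless.
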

\begin{proof}
We first prove the $L^\infty_t L^{p'}$ estimate, and then the Sobolev estimate $L^1_t   W^{1,q}$.

\begin{enumerate}[leftmargin=*]
    
\item \underline{\textit{$L^\infty_t L^{p'}$ estimates:}}

Taking the $L^{p'}$ norm in space yields
\begin{align}\label{eq:proof_w_p_1}
\| w_p (t ) \|_{p'}   &\leq   \nu  \big|   g_{\kappa} (\sigma  t) \big|   \sum_{1\leq k \leq d}     \big \|B_k(t )  \bw_k (\sigma \cdot )   \big\|_{p'}.
\end{align}
Since $x \mapsto  B_k(t,x)   $ is smooth on $\TT^d$ for all fixed $t \in [0,T]$, by Lemma \ref{lemma:improved_Holder}, we have
\begin{align}\label{eq:proof_w_p_2}
\Big \|B_k(t )  \bw_k (\sigma \cdot )  \Big\|_{p'} \leq        \big \|B_k(t ) \big\|_{p'} \big\|   \bw_k   \big\|_{p'}  + \sigma^{-\frac{1}{p'} } C_R   \| \bw_k     \|_{p'}.
\end{align}
Then from \eqref{eq:proof_w_p_1}, \eqref{eq:proof_w_p_2}, Lemma \ref{lemma:a_k},  and the fact that $ \big\|   \bw_k    \big\|_{p'}  \sim 1$, it follows that
\begin{align}
\| w_p (t ) \|_{p'}   &\lesssim   \nu  \big|   g_{\kappa} (\sigma  t) \big|  \sum_{1\leq k \leq d}   \|  R_k  \|_{L^1_{t,x}}^{    \frac{1}{p'}}          + \sigma^{-\frac{1}{p'} } C_R      .
\end{align}
Now we simply take $L^\infty$ in time to obtain
\begin{align*}
\| w_p  \|_{ L^\infty_t L^{p'}(\TT^d)}    
&\lesssim  \nu   \|R  \|_{L^1_{t,x}}^{   \frac{1}{p'}}   + C_R \sigma^{-\frac{1}{p'} } ,
\end{align*}
where we have used \eqref{eq:def_g_theta_g_w}.

Once we take $\l$ sufficiently large such that the error term $$  
C_R \sigma^{-1}  \leq \nu    \big \|  R  \|_{  L^1_{t,x} }^{   \frac{1}{p'}},
$$
the desired bound follows 
\begin{align*}
\| w_p \|_{ L^\infty_t L^p(\TT^d)}   \lesssim \nu   \big \|  R  \|_{  L^1_{t,x} }^{   \frac{1}{p'}}    .
\end{align*} 

\item \underline{\textit{Sobolev  estimate $L^1_t   W^{1,q}$:}}

 Taking Sobolev norm $W^{1,q} $ in space we have
\begin{align}\label{eq:w_p_proof_2}
\|  w_p (t ) \|_{W^{1,q}(\TT^d)}  \leq \nu  \big|  g_{\kappa} (\sigma  t)  \big| \sum_{1\leq k \leq d}   \Big\|  B_k(t) \bw_k (\sigma \cdot) \Big\|_{W^{1,q}(\TT^d)}.
\end{align}
Direct computation using H\"older's inequality gives
\begin{align*}
\Big\|  B_k(t) \bw_k (\sigma \cdot) \Big\|_{W^{1,q}(\TT^d)} \leq  C_R \Big(   \big\|    \bw_k (\sigma \cdot) \big\|_{L^{q}(\TT^d)}  +    \sigma \big\|    \nabla \bw_k (\sigma \cdot) \big\|_{L^{q}(\TT^d)} \Big) .  
\end{align*}
From this, by Proposition \ref{prop:bounds_periodic_phi_w}, we get
\begin{align}\label{eq:w_p_proof_3}
\Big\|  B_k(t) \bw_k (\sigma \cdot) \Big\|_{W^{1,q}(\TT^d)} \lesssim  C_R \sigma \mu^{1+\frac{d-1}{p'}  - \frac{d-1}{q}}.  
\end{align}

Thus from \eqref{eq:w_p_proof_2} and \eqref{eq:w_p_proof_3} we get
\begin{align}\label{eq:w_p_proof_4}
\|  w_p (t ) \|_{W^{1,q}(\TT^d)}  &\leq \nu C_R   \sigma    \mu^{1+ \frac{d-1}{p'}  - \frac{d-1}{q}}  \big|  g_{\kappa} (\sigma  t) \big|.
\end{align}
Integrating \eqref{eq:w_p_proof_4} in time and using \eqref{eq:def_g_theta_g_w} we have
\begin{align*}
\|  w_p   \|_{L^1_t W^{1,q} }  &\leq \nu C_R  \kappa^{-1}  \sigma  \mu^{1+ \frac{d-1}{p'}  - \frac{d-1}{q}}    = \nu C_R \sigma  \mu^{\frac{d-1}{p'}  - \frac{d-1}{q}} .
\end{align*}
Thanks to Lemma \ref{lemma:bound_hierarchy}, it follows from the above that
\begin{align*}
\|  w_p  \|_{L^1_t W^{1,q} }   \leq \nu C_R  \l^{- \gamma}  .
\end{align*}

\end{enumerate}
\end{proof}

\begin{lemma}[Estimate on $w_c$]\label{lemma:w_c_estimate}
There holds
\begin{align*}
\|w_c \|_{L^\infty_t L^{p'}}  & \leq C_R  \nu  \sigma^{-1} ,\\
\| w_c  \|_{L^1_t W^{1,q}} & \leq C_R   \nu \kappa^{-1}.
\end{align*}
In particular, for $\l$ sufficiently large  
\begin{align*}
\| w_c  \|_{L^\infty_t L^{p'}} & \leq \nu \| R \|_{L^1_{t,x}}^\frac{1}{q},   \\
\| w_c  \|_{L^1_t W^{1,q}} &\leq \delta/2.
\end{align*}
\end{lemma}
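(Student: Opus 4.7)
The plan is to estimate each summand $\mathcal{B}(\nabla B_k,\bw_k(\sigma\cdot))$ using the cheap bound Lemma~\ref{lemma:cheapbound_B} together with the scaling identity $\mathcal{R}(f(\sigma\cdot))=\sigma^{-1}(\mathcal{R}f)(\sigma\cdot)$, which is the mechanism that produces the factor $\sigma^{-1}$. The coefficients $B_k$ are smooth in $(t,x)$ with $C^m$ bounds depending only on $R$ and $\delta$ by Lemma~\ref{lemma:a_k}, so $\|\nabla B_k\|_{C^1}\leq C_R$. The temporal factor $g_\kappa(\sigma t)$ contributes $\|g_\kappa(\sigma\cdot)\|_{L^\infty}\lesssim 1$ for the first estimate and $\|g_\kappa(\sigma\cdot)\|_{L^1}\lesssim\kappa^{-1}$ for the second, via \eqref{eq:def_g_theta_g_w}.

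For the $L^\infty_t L^{p'}$ bound, apply Lemma~\ref{lemma:cheapbound_B} pointwise in $t$ to obtain
\[
\|\mathcal{B}(\nabla B_k,\bw_k(\sigma\cdot))(t)\|_{p'}\lesssim\|\nabla B_k(t)\|_{C^1}\,\sigma^{-1}\|\mathcal{R}\bw_k\|_{p'}\lesssim C_R\sigma^{-1},
\]
using Lemma~\ref{lemma:antidivergence_bounded} and Proposition~\ref{prop:bounds_periodic_phi_w} to see $\|\mathcal{R}\bw_k\|_{p'}\lesssim\|\bw_k\|_{p'}\lesssim 1$. Summing over $k$ and taking $L^\infty$ in time yields $\|w_c\|_{L^\infty_t L^{p'}}\leq C_R\nu\sigma^{-1}$, which is absorbed into the target $\nu\|R\|_{L^1_{t,x}}^{1/p'}$ for $\l$ sufficiently large.

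For the $L^1_t W^{1,q}$ bound, a gradient falling on $\mathcal{B}(\nabla B_k,\bw_k(\sigma\cdot))$ either lands on the coefficient (preserving the $\sigma^{-1}$ gain at the expense of a $C^2$-norm of $B_k$, still bounded by $C_R$) or on $\mathcal{R}(\bw_k(\sigma\cdot))$; in the second case the chain rule cancels the $\sigma^{-1}$ and leaves $(\nabla\mathcal{R}\bw_k)(\sigma\cdot)$, whose $L^q$ norm equals $\|\nabla\mathcal{R}\bw_k\|_q\lesssim\|\bw_k\|_q\lesssim\mu^{(d-1)/p'-(d-1)/q}$ by Proposition~\ref{prop:bounds_periodic_phi_w}. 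Crucially, the assumption \eqref{eq:pq_relationship} forces $1/p'<1/q$, so this exponent is negative and the factor is bounded uniformly in $\mu$. Integrating in $t$ against $\|g_\kappa(\sigma\cdot)\|_{L^1}\lesssim\kappa^{-1}$ gives $\|w_c\|_{L^1_t W^{1,q}}\leq C_R\nu\kappa^{-1}$, which is below $\delta/2$ for $\l=\kappa$ sufficiently large. The only subtle step is this last one: one must verify that the worst case, where the Sobolev derivative eats the $\sigma^{-1}$ gain, is rescued by the Mikado bound $\|\bw_k\|_q\lesssim\mu^{(d-1)/p'-(d-1)/q}$, which is harmless precisely under the DiPerna--Lions complementary range \eqref{eq:pq_relationship}; everything else is bookkeeping with Lemmas~\ref{lemma:antidivergence_bounded} and \ref{lemma:cheapbound_B}.
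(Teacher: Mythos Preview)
Your approach is essentially the same as the paper's: both use Lemma~\ref{lemma:cheapbound_B} together with the rescaling identity $\mathcal{R}(f(\sigma\cdot))=\sigma^{-1}(\mathcal{R}f)(\sigma\cdot)$ for the $L^\infty_tL^{p'}$ bound, and both absorb the derivative in the $W^{1,q}$ estimate by noting that at worst the $\sigma^{-1}$ gain is cancelled and one is left with a Calder\'on--Zygmund bound on $\nabla\mathcal{R}\bw_k$, after which the $\kappa^{-1}$ comes from $\|g_\kappa(\sigma\cdot)\|_{L^1}$.

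There is one small gap. You invoke $\|\nabla\mathcal{R}\bw_k\|_q\lesssim\|\bw_k\|_q$, which is a Calder\'on--Zygmund estimate in $L^q$; but the hypotheses allow $q=1$, where this fails. The paper circumvents this by estimating $\|\nabla\mathcal{B}(\nabla B_k,\bw_k(\sigma\cdot))\|_{p'}$ instead (recall $1<p'<\infty$, so CZ applies) and then using the embedding $L^{p'}(\TT^d)\hookrightarrow L^q(\TT^d)$ since $q<p'$. This yields the cruder bound $C_R$ rather than your $\mu^{(d-1)/p'-(d-1)/q}$, but it works at the endpoint. A second, more cosmetic point: your dichotomy ``the gradient lands on the coefficient or on $\mathcal{R}(\bw_k(\sigma\cdot))$'' does not account for the term $\nabla\mathcal{R}(\nabla a\cdot\mathcal{R}f)$ coming from the second piece of $\mathcal{B}(a,f)=a\mathcal{R}f-\mathcal{R}(\nabla a\cdot\mathcal{R}f)$. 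The paper handles this by writing the clean inequality $\|\nabla\mathcal{B}(\nabla a,f)\|_r\lesssim\|a\|_{C^2}\big(\|\mathcal{R}f\|_r+\|\nabla\mathcal{R}f\|_r\big)$. This term is in fact harmless (it retains the $\sigma^{-1}$ gain), but it should be mentioned.
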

\begin{proof}

We first prove the $L^\infty_t L^{p'}$ estimate, and then the Sobolev estimate $L^1_t   W^{1,q}$.

\begin{enumerate}[leftmargin=*]
    
\item \underline{\textit{$L^\infty_t L^{p'}$ estimates:}}
Taking $L^{p'}$ norm in space we have
\begin{align}\label{eq:proof_w_c_1}
\| w_{c } (t) \|_{p'} & \leq   \nu  \big|  g_{\kappa} (\sigma  t) \big| \sum_{1\leq k \leq d} \Big\| \mathcal{B}  ( \nabla  B_k  , \bw_k (\sigma \cdot) ) \Big\|_{p'}  .
\end{align}
By Lemma \ref{lemma:cheapbound_B} we get
\begin{equation}
\begin{aligned}\label{eq:proof_w_c_2}
\Big\| \mathcal{B}  ( \nabla  B_k  , \bw_k (\sigma \cdot) ) \Big\|_{p'}  & \leq   C_R  \big\|    \mathcal{R} \bw_k (\sigma \cdot)   \big\|_{p'}  .
\end{aligned}
\end{equation}
Since the assumption on $p,q$ implies that for $1< p' <\infty$, we have
$$
\big\|    \mathcal{R} \bw_k (\sigma \cdot)   \big\|_{p'} \lesssim \sigma^{-1} .
$$
Then it follows from \eqref{eq:proof_w_c_1} and \eqref{eq:proof_w_c_2} that
\begin{align*} 
\| w_{c } (t) \|_{p'}  &\leq  C_R  \nu  \sigma^{-1} \big|  g_{\kappa} (\sigma  t) \big|,
\end{align*}
which implies the desired bound thanks to \eqref{eq:def_g_theta_g_w}.

\item \underline{\textit{$L^1_t W^{1,q}$ estimates:}}

We take $ W^{1,q}$ norm in space to obtain
\begin{align*} 
\| w_{c} (t) \|_{W^{1,q}} & \leq   \nu  \big|  g_{\kappa} (\sigma  t) \big| \sum_{1\leq k \leq d}  \big\| \mathcal{B}  ( \nabla  B_k  , \bw_k (\sigma \cdot) )  \big\|_{W^{1,q}}.
\end{align*}

By Poincare's inequality, we have
\begin{align}\label{eq:proof_w_c_4}
\| w_{c} (t) \|_{W^{1,q}} & \lesssim   \nu  \big|  g_{\kappa} (\sigma  t) \big| \sum_{1\leq k \leq d}  \big\| \nabla \mathcal{B}  ( \nabla  B_k  , \bw_k (\sigma \cdot) )  \big\|_{q} .
\end{align}

In fact, a slight modification of the proof of Lemma \ref{lemma:cheapbound_B} gives  
$$
\| \nabla \mathcal{B}  ( \nabla  a  ,f ) \|_{r} \lesssim \|a \|_{C^2} \Big[ \|  \mathcal{R} f \|_r +\|\nabla   \mathcal{R} f \|_r  \Big] \quad \text{for all $1 \leq r \leq \infty$ and $a ,f \in C^\infty (\TT^d)$}.
$$
Due to the assumptions on $p,q$, $1 \leq q < p' < \infty$, which in particular implies that
\begin{equation}
\begin{aligned}\label{eq:proof_w_c_5}
\big\| \nabla \mathcal{B}  ( \nabla  B_k  , \bw_k (\sigma \cdot) ) \big\|_{p'}  & \leq  C_R   ,
\end{aligned}
\end{equation}
where we used the fact that $\nabla   \mathcal{R} $ is a Calder\'on-Zygmund operator on $\TT^d$.

Combining \eqref{eq:proof_w_c_4} and \eqref{eq:proof_w_c_5} we have
\begin{align*} 
\| w_{c } (t) \|_{W^{1,q}}  \leq  C_R \nu  \big|  g_{\kappa} (\sigma  t) \big|,   
\end{align*}
which implies the desired bound after integrating in time thanks to \eqref{eq:def_g_theta_g_w}.

\end{enumerate}

\end{proof}

\subsection{Proof of the perturbation part of Proposition \ref{prop:main_prop}}

Since the second part of \eqref{eq:main_prop_4} has been proved in Lemma \ref{lemma:conclusion_new_rho_u_R}, we finish proving  \eqref{eq:main_prop_0}--\eqref{eq:main_prop_5} of Proposition \ref{prop:main_prop} in the lemma below. 

\begin{lemma}\label{lemma:proof_main_prop_1}
There exist a universal constant $M$ and a large $N \in \NN$ such that for all $\l(\nu, \delta   ,R)$ sufficiently large, the following holds.
\begin{enumerate}
    \item The density perturbation $\theta$ satisfies 
    $$ 
    \nu^{-1} \|\theta \|_{L^1_t L^p} \leq M \|R \|_{L^1_{t,x}}^{1/p} \quad \text{and } \quad \Supp \theta    \subset I_r\times \TT^d  .
    $$
    
    \item The vector field perturbation $w$ satisfies
    $$
    \nu\| w \|_{L^\infty_t L^{p'}} \leq M \|R \|_{L^1_{t,x}}^{1/p'} \quad \text{and } \quad \| w \|_{L^1 W^{1, q}} \leq \delta  .
    $$

    \item The density perturbation $\theta$ has zero mean, and for all $t\in [0,T]$ and $\varphi \in C^\infty(\TT^d)$
    $$ 
    \int_{\TT^d } \theta(t,x)  \varphi(x) \, dx \leq \delta  \| \varphi\|_{C^{N}}.
    $$
    
\end{enumerate}
\end{lemma}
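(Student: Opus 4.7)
The plan is to assemble Lemma \ref{lemma:proof_main_prop_1} from the single-piece estimates already proved in Lemmas \ref{lemma:theta_p_estimate}--\ref{lemma:w_c_estimate}, together with the structural properties of $(\theta,w)$ built into their construction in Section \ref{section:prop_proof1}. First I would split $\theta=\theta_p+\theta_c+\theta_o$ and $w=w_p+w_c$ and apply the triangle inequality. Lemma \ref{lemma:theta_p_estimate} gives the $L^1_tL^p$ bound on $\theta_p$ with a universal constant (call it $M_1$), while Lemmas \ref{lemma:theta_c_estimate} and \ref{lemma:theta_o_estimate} show that $\theta_c$ and $\theta_o$ are of lower order in $\sigma^{-1}$; taking $\l=\l(\nu,\delta,R)$ large enough to absorb the $R$-dependent prefactor $C_R$ into $\|R\|_{L^1_{t,x}}^{1/p}$ yields the desired bound with $M:=3M_1$. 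The estimates on $w$ follow identically by combining Lemma \ref{lemma:w_p_estimate} with Lemma \ref{lemma:w_c_estimate}, using Lemma \ref{lemma:bound_hierarchy} to force $\|w\|_{L^1_tW^{1,q}}\le\delta$ once $\l$ is large enough to make $C_R \l^{-\gamma}\le \delta/2$.

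Second, I would verify the temporal support and zero-mean claims by direct inspection of the formulas. Both $A_k$ and $B_k$ vanish outside $\Supp\chi_k\subset I_{r/2}$, so the same holds for $\theta_p,\theta_c,w_p,w_c$. By formula \eqref{eq:def_theta_o_2}, $\theta_o$ is a temporal factor times a spatial divergence of $\sum_k \chi_k^2 R_k \mathbf{e}_k$, so it is supported where the $\chi_k$ are, and has zero spatial mean at every $t$; the zero spatial mean of $\theta_p+\theta_c$ is built into the definition of $\theta_c$ in \eqref{eq:def_theta_c_w_c}. Relabeling $r:=r/2$ yields item (1). The containment $\Supp_t\theta\cup\Supp_t w\subset\Supp_t R$ has already been recorded in Lemma \ref{lemma:conclusion_new_rho_u_R}.

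The main work, and the single nontrivial step, is the weak continuity bound (3). Since $\theta_p+\theta_c$ has zero spatial mean, for any $\varphi\in C^\infty(\TT^d)$ I may replace $\varphi$ by $\varphi-\fint\varphi$ in the pairing and write, for each fixed $t$,
\begin{equation*}
\int_{\TT^d}(\theta_p+\theta_c)(t,x)\varphi(x)\,dx
= \nu^{-1}\widetilde{g}_\kappa(\sigma t)\sum_{k}\int_{\TT^d}A_k(t,x)\bigl(\varphi(x)-\tfrac{}{}\fint\varphi\bigr)\,\bp_k(\sigma x)\,dx.
\end{equation*}
Since $\bp_k$ has zero mean, Lemma \ref{lemma:mean_values_oscillation} applied with the smooth function $A_k(t,\cdot)(\varphi-\fint\varphi)$ in place of $a$ gives, for any even $n$,
\begin{equation*}
\Bigl|\int (\theta_p+\theta_c)\varphi\,dx\Bigr|
\lesssim_n \nu^{-1}\kappa\,\sigma^{-n}C_R\|\varphi\|_{C^n}\|\bp_k\|_2
\lesssim_n \nu^{-1}\kappa\,\sigma^{-n}\mu^{(d-1)(\frac{1}{p}-\frac{1}{2})}C_R\|\varphi\|_{C^n},
\end{equation*}
using Proposition \ref{prop:bounds_periodic_phi_w} and the fact that $\|\widetilde{g}_\kappa\|_\infty\lesssim\kappa$. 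For $\theta_o$ I simply invoke the $L^\infty$ bound $\|\theta_o\|_\infty\le C_R\sigma^{-1}$ from Lemma \ref{lemma:theta_o_estimate}, which pairs against $\varphi\in C^0$ to give $C_R\sigma^{-1}\|\varphi\|_{C^0}$. Recalling $\mu=\kappa=\l$ and $\sigma=\lfloor\l^\gamma\rfloor$, I fix $N\in\NN$ even and large enough that $N\gamma$ exceeds all positive exponents of $\l$ appearing above, so that $\sigma^{-N}\kappa\,\mu^{(d-1)/p}\le\l^{-1}$; then taking $\l$ sufficiently large depending on $\nu,\delta,R$ yields the required bound $\delta\|\varphi\|_{C^N}$. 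The principal obstacle is precisely this last step: balancing the large factor $\widetilde{g}_\kappa\sim\kappa=\l$ and the growth of $\|\bp_k\|_2$ in $\mu$ against the gain $\sigma^{-N}=\l^{-N\gamma}$ coming from spatial oscillation, which is made possible by the strict positivity $\gamma>0$ fixed in \eqref{eq:def_gamma} and the freedom to choose $N$ universal.
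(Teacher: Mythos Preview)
Your proposal is correct and follows essentially the same route as the paper: assemble items (1) and (2) from Lemmas \ref{lemma:theta_p_estimate}--\ref{lemma:w_c_estimate} via the triangle inequality, read off the support and zero-mean claims from the construction, and for (3) apply Lemma \ref{lemma:mean_values_oscillation} to the pairing of $\theta_p$ (and $\theta_c$) with $\varphi$, together with the $L^\infty$ bound of Lemma \ref{lemma:theta_o_estimate} for $\theta_o$, choosing $N$ so that $N\gamma$ dominates the positive powers of $\lambda$ coming from $\kappa$ and $\|\bp_k\|_2$. The only cosmetic difference is that you handle $\theta_p+\theta_c$ jointly via the zero-mean substitution $\varphi\mapsto\varphi-\fint\varphi$, whereas the paper bounds $\theta_p$ and $\theta_c$ separately (noting the latter is analogous); both reduce to the same oscillation estimate.
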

\begin{proof}
By Lemmas \ref{lemma:theta_p_estimate},\ref{lemma:theta_c_estimate},\ref{lemma:theta_o_estimate} and Lemmas \ref{lemma:w_p_estimate} and \ref{lemma:w_c_estimate}, for $\l$ sufficiently large, we conclude that
\begin{align*}
\|\theta \|_{L^1_t L^p} &\lesssim  \nu  \|R \|_{L^1_{t,x}}^{1/p}  \\
\| w \|_{L^\infty_t L^{p'}}&\lesssim  \nu^{-1}  \|R \|_{L^1_{t,x}}^{1/p'}
\end{align*}
with implicit constants independent of $\l$ and $(\rho,u,R)$.
We thus choose the constant $M$ to be maximum of the two implicit constants.

To see that $\Supp \theta  \subset I_r \times \TT^d $, we simply note that by \eqref{eq:def_chi_j}, the coefficients $A_k$ and $B_k$ in the definitions of $\theta_p$, $\theta_c$ and $\theta_o$ all verify this property.

By Lemmas \ref{lemma:w_p_estimate} and \ref{lemma:w_c_estimate} again, for $\l$ sufficiently large, we have
$$
\| w \|_{L^1_t W^{1, q}} \leq \delta .
$$

Finally, let us show the last property. Noticing that $\theta_p +\theta_c$ has zero mean by default and $\theta_o$ is a divergence, we conclude that the density perturbation $\theta$ is mean-free. 
To show the last estimate, fix a test function $\varphi \in C^\infty(\TT^d)$.  By definitions, we have
\begin{align*}
\Big| \int_{\TT^d} \theta \varphi \, dx \Big| \leq \Big| \int_{\TT^d}  \theta_p   \varphi \, dx\Big|  + \Big| \int_{\TT^d}  \theta_c  \varphi \, dx\Big|+    \Big| \int_{\TT^d} \theta_o \varphi \, dx  \Big|.
\end{align*}
We show the bounds for $\theta_p$ and $\theta_o$ since the argument can be adapted to bound $ \theta_c$ as well.

On one hand, applying Lemma \ref{lemma:mean_values_oscillation} we have
\begin{align*}
\Big|  \int_{\TT^d}  \theta_p   \varphi \, dx\Big| \lesssim \sigma^{-N} \| \widetilde{g}_{\kappa}  \|_\infty \sum_{1 \leq k \leq d}\| A_k \varphi \|_{C^N}     \| \bp_k \|_2.
\end{align*}
Recall that $\gamma N > d+1 $, and then
\begin{align}
\Big|  \int_{\TT^d}  \theta_p   \varphi \, dx\Big| & \leq C_R   \l^{-d-1} \kappa  \| \bp_k \|_2 \|   \varphi \|_{C^N}  \nonumber \\
& \leq C_R   \l^{-1}     \|   \varphi \|_{C^N}  .  \label{eq:proof_main_prop_part1_1}
\end{align}
On the other hand, by Lemma \ref{lemma:theta_o_estimate}, we have
\begin{align}
\Big| \int_{\TT^d} \theta_o \varphi \, dx  \Big| &\leq \|\theta_o \|_{L^\infty_{t,x}}  \|\varphi \|_\infty \nonumber\\
& \leq C_R \sigma^{-1} \|\varphi \|_\infty. \label{eq:proof_main_prop_part1_2}
\end{align}

Putting \eqref{eq:proof_main_prop_part1_1} and \eqref{eq:proof_main_prop_part1_1} together and increasing the value of $\l$ if necessary, we obtain
$$
\Big| \int_{\TT^d} \theta  \varphi \, dx  \Big|  \leq \delta \|\varphi \|_{C^N}.
$$
\end{proof}

\section{Proof of the proposition \ref{prop:main_prop}: estimates on the new defect field}\label{section:prop_proof3}
We now turn to the final step of proving Proposition \ref{prop:main_prop}. Recall that we need to estimates the terms that solve the divergence equations
\begin{align*}
\D R_{\Osc}  & = \p_t \theta_o  + \D (\theta_p w_p) + \D R,  \\
\D R_{\Tem}  & =  \p_t \theta_p + \p_t \theta_c,  \\
\D R_{\Lin}  & = \D (  \theta u + \rho w ), \\
\D R_{\Cor}  & =  \D (    \theta  w_c + (\theta_o +  \theta_c ) w_p ) .
\end{align*}
The linear error $R_{\Lin}$ and correction error $R_{\Cor}$ can be estimated easily by standard methods. The temporal error $ R_{\Tem}$ is subtler and we need to exploit the derivative gain given by the potential $\mathbf{\Omega}_k$   in Theorem \ref{thm:WPhi_exact_stationary}. Such a difficulty is not present in \cite{MR3884855,1902.08521}.

For the oscillation error $R_{\Osc}$, we will use the decomposition done at the end of Section \ref{section:prop_proof1}, which reads
$$
R_{\Osc} =  R_{\text{osc,x}}  + R_{\text{osc,t}}    +  R_{\Rem}  .
$$
We summarize how each part of $R_{\Osc} $ will be estimated as follows.
\begin{enumerate}
    \item As typical in the literature, $R_{\text{osc,x}}$ can be shown to be small due to a gain of $\sigma^{-1}$ given by the antidivergence.
    \item The term $R_{\text{osc,t}} $ is small by itself since it is the outcome of a temporal cancellation.

    \item Finally, $R_{\Rem}$ is the leftover old defect field that is small due to our choice of cutoffs $\chi_k$ in \eqref{eq:def_chi_j}.
\end{enumerate}

\subsection{Temporal error}
\begin{lemma}[$R_{\Tem}$ estimate]\label{lemma:tem_error}
For $\l$ sufficiently large,
$$
\| R_{\Tem}  \|_{L^1_{t,x}}  \leq  \frac{\delta }{16 }  .
$$
\end{lemma}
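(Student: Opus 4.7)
The plan is to differentiate in time using the product rule, noting that $\mathcal{B}$ is linear in its first slot, to split
\begin{align*}
R_{\Tem} = \underbrace{\nu^{-1}\partial_t[\widetilde{g}_{\kappa}(\sigma t)]\sum_k \mathcal{B}(A_k,\bp_k(\sigma\cdot))}_{=:\,(\mathrm{I})}
\;+\;\underbrace{\nu^{-1}\widetilde{g}_{\kappa}(\sigma t)\sum_k \mathcal{B}(\partial_t A_k,\bp_k(\sigma\cdot))}_{=:\,(\mathrm{II})}.
\end{align*}
The dominant piece is $(\mathrm{I})$, because $\widetilde{g}_{\kappa}(\sigma\cdot)$ oscillates at the finest scale $(\sigma\kappa)^{-1}$, so a direct computation gives $\|\partial_t[\widetilde{g}_{\kappa}(\sigma\cdot)]\|_{L^1_t}\lesssim \sigma\kappa$, whereas $\|\widetilde{g}_{\kappa}(\sigma\cdot)\|_{L^1_t}\lesssim 1$ by \eqref{eq:def_g_theta_g_w}. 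Smoothness of $\partial_t A_k$ and the bound $\|\partial_t A_k\|_{C^1}\lesssim C_R$ follow from Lemma~\ref{lemma:a_k}, since the cutoff $\chi_k$ in \eqref{eq:def_chi_j} keeps $|R_k|$ bounded away from zero on $\Supp A_k$, so derivatives of $|R_k|^{1/p}$ remain harmless.

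For the spatial factors, the key is to estimate $\mathcal{B}(\cdot,\bp_k(\sigma\cdot))$ in $L^r_x$ for the small $r>1$ supplied by Lemma~\ref{lemma:bound_hierarchy}, rather than the naive $L^p_x$. By Lemma~\ref{lemma:cheapbound_B}, followed by the scaling $\mathcal{R}[\bp_k(\sigma\cdot)] = \sigma^{-1}(\mathcal{R}\bp_k)(\sigma\cdot)$ and integer-scale periodicity on $\TT^d$, this reduces to $C_R\sigma^{-1}\|\mathcal{R}\bp_k\|_{L^r}$. Invoking $\bp_k=\D\mathbf{\Omega}_k$ from Theorem~\ref{thm:peridoc_Phi_k_W_k} and the boundedness of the Riesz-type operator $\mathcal{R}\D$ on $L^r$ for $1<r<\infty$, together with Proposition~\ref{prop:bounds_periodic_phi_w}, yields
$$
\|\mathcal{R}\bp_k\|_{L^r}\lesssim\|\mathbf{\Omega}_k\|_{L^r}\lesssim\mu^{-1+(d-1)/p-(d-1)/r}\leq\mu^{-1}\lambda^{-\gamma},
$$
where the last step is precisely the second inequality of Lemma~\ref{lemma:bound_hierarchy}. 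Consequently,
$$
\|\mathcal{B}(A_k,\bp_k(\sigma\cdot))\|_{L^r_x}+\|\mathcal{B}(\partial_t A_k,\bp_k(\sigma\cdot))\|_{L^r_x}\lesssim C_R\sigma^{-1}\mu^{-1}\lambda^{-\gamma},
$$
and the same bound transfers to the $L^1_x$ norm via H\"older with $|\TT^d|=1$.

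Combining via Fubini/Minkowski with the temporal bounds, and using $\kappa=\mu=\lambda$ and $\sigma=\lambda^\gamma$,
\begin{align*}
\|(\mathrm{I})\|_{L^1_{t,x}}&\lesssim\nu^{-1}\cdot\sigma\kappa\cdot C_R\sigma^{-1}\mu^{-1}\lambda^{-\gamma}=\nu^{-1}C_R\lambda^{-\gamma},\\
\|(\mathrm{II})\|_{L^1_{t,x}}&\lesssim\nu^{-1}\cdot 1\cdot C_R\sigma^{-1}\mu^{-1}\lambda^{-\gamma},
\end{align*}
both of which fall below $\delta/16$ once $\lambda$ is taken sufficiently large (depending on $\nu,\delta,R$). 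The main obstacle is term $(\mathrm{I})$: differentiating the temporal profile costs a full factor $\sigma\kappa$ while the single spatial antidivergence recovers only $\sigma^{-1}\mu^{-1}$, and these cancel exactly under the balance $\kappa=\mu$ imposed by the heuristic that temporal and spatial frequencies must be comparable. All the smallness that remains is bought from the ``bonus'' $L^r$ integrability of the divergence potential $\mathbf{\Omega}_k$, exploited through the slack $r>1$ in Lemma~\ref{lemma:bound_hierarchy}. This is the sharp temporal-error estimate advertised in the introduction, and it is precisely the reason Theorem~\ref{thm:peridoc_Phi_k_W_k} constructs $\mathbf{\Omega}_k$ with its specific $L^r$ bound rather than relying on the weaker $\|\mathcal{R}\bp_k\|_{L^p}\lesssim\mu^{-1}$.
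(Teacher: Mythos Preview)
Your proof is correct and follows essentially the same approach as the paper: the same product-rule splitting into $(\mathrm{I})$ and $(\mathrm{II})$ (the paper's $R_{\Tem,1}$ and $R_{\Tem,2}$), and for the dominant term $(\mathrm{I})$ the same chain of Lemma~\ref{lemma:cheapbound_B}, the identity $\bp_k=\D\mathbf{\Omega}_k$, $L^r$-boundedness of $\mathcal{R}\D$, Proposition~\ref{prop:bounds_periodic_phi_w}, and the second inequality of Lemma~\ref{lemma:bound_hierarchy}. The one inessential difference is that for the subordinate term $(\mathrm{II})$ the paper does not re-invoke the sharp $\mathbf{\Omega}_k$ bound but instead uses the cruder estimate $\|\mathcal{R}\bp_k(\sigma\cdot)\|_{1}\lesssim\|\bp_k\|_{1}\lesssim\mu^{(d-1)/p-(d-1)}$ together with $\|\widetilde{g}_\kappa\|_{L^1_t}\lesssim 1$; your sharper bound works just as well.
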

\begin{proof}
We may rewrite it as
\begin{align*}
R_{\Tem}  &= \nu^{-1}      \sum_{1\leq k \leq d} \p_t( \widetilde{g}_{\kappa} (\sigma t))  \mathcal{B} (A_k , \bp_k(\sigma\cdot) )     +    \widetilde{g}_{\kappa} (\sigma t)    \mathcal{B} ( \p_t A_k , \bp_k(\sigma \cdot) ) \\
& := R_{\Tem,1} + R_{\Tem,2} .
\end{align*}
 
We will treat the second term $ R_{\Tem,2}$ as an error.  

\begin{enumerate}[leftmargin=*]
\item \underline{\textit{$R_{\Tem,1}$ estimate:}}

Taking $L^1$ in space, we get
\begin{align}\label{eq:R_l1}
\|  R_{\Tem,1}(t)   \|_{1} \lesssim \nu^{-1}  \sigma |\p_t  \widetilde{g}_{\kappa}( \sigma t)| \sum_k  \big\|  \mathcal{B}  \big( A_k , \bp_k(\sigma\cdot)   \big)  \big\|_1.   
\end{align}
Thanks to the potential $ \mathbf{\Omega}_k$, we have
\begin{align}\label{eq:proof_R_tem_1}
 \big\| \mathcal{B}  \big( A_k , \bp_k(\sigma\cdot)   \big)   \big\|_1 =  \big\|  \mathcal{B}  \big( A_k , \D \mathbf{\Omega}_k(\sigma \cdot )    \big)     \big\|_1.
\end{align}
Next, we  apply Lemma \ref{lemma:cheapbound_B} to obtain 
\begin{align*}
 \big\|  \mathcal{B}  \big( A_k , \D \mathbf{\Omega}_k(\sigma \cdot )    \big)    \big\|_1  &\leq C_R \big\| \mathcal{R}   \big(\D\mathbf{\Omega}_k(\sigma \cdot )  \big)\big\|_1 \\
  \text{\small (by periodic rescaling)}  \quad  &\leq C_R \sigma^{-1} \big\| \mathcal{R}    \D\mathbf{\Omega}_k(\sigma \cdot )  \big\|_1 \\
 \text{\small (by definition of $\mathcal{R})$} \quad  & \leq C_{r,R} \sigma^{-1} \|\mathbf{\Omega}_k \|_{r}  \\
\text{\small (by Proposition \ref{prop:bounds_periodic_phi_w})} \quad & \leq C_{r,R} \sigma^{-1} \mu^{-1}  \mu^{\frac{d-1}{p} -    \frac{d-1}{r} }, 
\end{align*}
for any $1< r < \infty$. Then we fix $r>1$ as in Lemma \ref{lemma:bound_hierarchy} so that
\begin{align*}
 \big\| \mathcal{B}  \big( A_k , \bp_k(\sigma\cdot)   \big)   \big\|_1 & \lesssim_r   C_R \sigma^{-1} \mu^{-1}   \l^{-\gamma}  ,
\end{align*}
which together with the bound
$$
\int_{[0,1 ]} |\p_t  \widetilde{g}_{\kappa}( \sigma t)|\, dt \lesssim \kappa  ,
$$
implies that
\begin{align*}
\|  R_{\Tem,1}    \|_{L^1_{t,x}} & \leq C_R \kappa \sigma \mu^{-1}  \sigma^{-1}    \l^{-\gamma}  \\
& \leq C_R \l^{-\gamma} ,
\end{align*}
where we have also used Lemma \ref{lemma:bound_hierarchy}.

Now for $\l$ sufficiently large, we have
$$
\|  R_{\Tem,1}    \|_{L^1_{t,x}}  \leq    \frac{\delta}{32} .
$$

\item \underline{\textit{$R_{\Tem,2}$ estimate:}}

We treat the second term $R_{\Tem,2}$ as an error and use Lemma \ref{lemma:cheapbound_B} to obtain that
\begin{align*}
  \| R_{\Tem,2} \|_1 \leq C_R | \widetilde{g}_{\kappa}( \sigma t)| \sum_k \|  \bp_k(\sigma \cdot ) \|_1.
\end{align*}
Using Proposition \ref{prop:bounds_periodic_phi_w} and \eqref{eq:def_g_theta_g_w},  integrating in time gives
\begin{align*}
 \| R_{\Tem,2} \|_{L^1_{t,x}}\leq C_R  \mu^{\frac{d-1}{p} - (d-1)}.
\end{align*}
Thanks to Lemma \ref{lemma:bound_hierarchy}, for $\l$ sufficiently large, we have
\begin{align*}
\| R_{\Tem,2} \|_{L^1_{t,x}} \leq  \frac{\delta}{32}.
\end{align*}

\end{enumerate}

\end{proof}

\subsection{Linear error}
\begin{lemma}[$R_{\Lin}$ estimate]
For $\l$ sufficiently large,
$$
\| R_{\Lin}  \|_{L^1_{t,x}}  \leq   \frac{\delta }{16 } .
$$
\end{lemma}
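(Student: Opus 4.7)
The plan is to use H\"older's inequality together with the boundedness of $u$ and $\rho$, which are smooth and fixed at this stage of the construction. Writing $\|R_{\Lin}\|_{L^1_{t,x}} \leq \|u\|_{L^\infty_{t,x}} \|\theta\|_{L^1_{t,x}} + \|\rho\|_{L^\infty_{t,x}} \|w\|_{L^1_{t,x}}$ and absorbing $\|u\|_\infty + \|\rho\|_\infty$ into a constant $C_R$, it suffices to make $\|\theta\|_{L^1_{t,x}}$ and $\|w\|_{L^1_{t,x}}$ as small as we wish by taking $\l$ large. Note that the earlier perturbation bounds $\|\theta\|_{L^1_t L^p} \lesssim \nu \|R\|^{1/p}$ and $\|w\|_{L^\infty_t L^{p'}} \lesssim \nu^{-1}\|R\|^{1/p'}$ are not useful here since they do not decay with $\l$; the point is that $\theta$ and $w$ are concentrated both in space and (for $w$) in time, so their $L^1_{t,x}$ norms are genuinely smaller than the $L^r$ norms used for the perturbation estimate.

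Concretely, I would decompose $\theta = \theta_p + \theta_c + \theta_o$ and $w = w_p + w_c$ and bound each piece in $L^1_{t,x}$ by elementary means. For $\theta_p = \nu^{-1}\widetilde{g}_\kappa(\sigma t)\sum_k A_k \bp_k(\sigma\cdot)$, using $\|A_k\|_\infty \leq C_R$ (thanks to the cutoff $\chi_k$), $\|\bp_k(\sigma\cdot)\|_1 = \|\bp_k\|_1 \lesssim \mu^{-(d-1)/p'}$ from Proposition~\ref{prop:bounds_periodic_phi_w}, and $\|\widetilde{g}_\kappa\|_{L^1([0,1])} \lesssim 1$ from \eqref{eq:def_g_theta_g_w}, one gets $\|\theta_p\|_{L^1_{t,x}} \leq C_R \nu^{-1} \mu^{-(d-1)/p'}$. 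The corrector $\theta_c$ is even smaller after invoking Lemma~\ref{lemma:mean_values_oscillation}, while $\theta_o$ is already bounded by $C_R \sigma^{-1}$ in $L^\infty_{t,x}$ by Lemma~\ref{lemma:theta_o_estimate}. Analogously, $\|w_p\|_{L^1_{t,x}} \leq C_R \nu \kappa^{-1}\mu^{-(d-1)/p}$, where the extra $\kappa^{-1}$ comes from $\|g_\kappa\|_{L^1([0,1])} \lesssim \kappa^{-1}$, and $\|w_c\|_{L^1_{t,x}} \leq \|w_c\|_{L^\infty_t L^{p'}} \leq C_R \nu \sigma^{-1}$ by Lemma~\ref{lemma:w_c_estimate}.

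Summing, $\|R_{\Lin}\|_{L^1_{t,x}} \leq C_R \bigl(\nu^{-1}\mu^{-(d-1)/p'} + \nu\kappa^{-1}\mu^{-(d-1)/p} + \nu\sigma^{-1} + \sigma^{-1}\bigr)$, all terms vanishing as $\l \to \infty$ with $\nu$ and $\delta$ held fixed. Choosing $\l$ large enough relative to $(\rho,u,R)$ and the parameters $\nu,\delta$ produces the desired bound $\|R_{\Lin}\|_{L^1_{t,x}} \leq \delta/16$. This is in fact the easiest of the four error estimates and presents no real obstacle: the old solution appears only as a fixed $L^\infty$ multiplier, so crude size bounds on the perturbations obtained purely from spatial and temporal intermittency are already enough.
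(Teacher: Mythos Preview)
Your argument is correct and follows essentially the same route as the paper: both start from $\|R_{\Lin}\|_{L^1_{t,x}}\le\|u\|_{L^\infty_{t,x}}\|\theta\|_{L^1_{t,x}}+\|\rho\|_{L^\infty_{t,x}}\|w\|_{L^1_{t,x}}$ and then show $\|\theta\|_{L^1_{t,x}}$ and $\|w\|_{L^1_{t,x}}$ are $o(1)$ as $\l\to\infty$ using the spatial concentration of $\bp_k,\bw_k$ and the temporal concentration of $g_\kappa$. The only cosmetic difference is that for $\|w\|_{L^1_{t,x}}$ the paper simply recycles the already-established bound $\|w\|_{L^1_t W^{1,q}}\le C_R\nu(\l^{-\gamma}+\kappa^{-1})$ (using $L^q\hookrightarrow L^1$), whereas you redo a direct $L^1$ estimate on $w_p$ via $\|g_\kappa\|_{L^1}\lesssim\kappa^{-1}$ and $\|\bw_k\|_1$; either works.
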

\begin{proof}
We start with H\"older's inequality
\begin{align*}
\big\|R_{\Lin}  \big \|_{L^1_{t,x}}   \leq\| \theta   \|_{L^1_{t,x}}    \| u \|_{L^\infty_{t,x}} +     \| \rho   \|_{L^\infty_{t,x}}    \| w \|_{L^1_{t,x}}.
\end{align*}

On one hand, by H\"older's inequality  we get 
\begin{equation}
\begin{aligned}\label{eq:theta_pc_L1}
\| \theta_p + \theta_c  \|_{L^1_{t,x}}  & \leq C_{R}    \nu^{-1}  \sum_{1\leq k \leq d}  \|     \widetilde{g}_{\kappa} (\sigma t)  A_k(t,x)  \bp_k(\sigma x)    \|_{L^1_{t,x}} \\
& \leq C_{R}    \nu^{-1}  \sum_{1\leq k \leq d}  \|     \widetilde{g}_{\kappa}\|_{L^1([0,1])}    \|  \bp_k     \|_{1}\\
&\leq C_{R}    \nu^{-1}    \mu^{\frac{d-1}{p}  -  {d-1} }  .  
\end{aligned}
\end{equation}
By definition of $\theta_o$ \eqref{eq:def_theta_o_1} we have
\begin{align}\label{eq:theta_o_L1}
\| \theta_o  \|_{L^1_{t,x}}  \lesssim C_{R} \sigma^{-1}\|h \|_{L^1([0,1])} \lesssim_{R} \sigma^{-1}.
\end{align}

On the other hand, since $1 \leq q <\infty $, by Lemma \ref{lemma:w_p_estimate} and Lemma \ref{lemma:w_c_estimate}  
\begin{equation}
\begin{aligned}\label{eq:w_pc_L1}
\| w  \|_{L^1_{t,x} }  & \leq \| w_p  \|_{L^1_{t } W^{1,q} } + \| w_c  \|_{L^1_{t } W^{1,q} }\\
&\leq C_{R}    \nu  (\sigma^{-1}  + \kappa^{-1 })   . 
\end{aligned}
\end{equation}

Combining \eqref{eq:theta_pc_L1}, \eqref{eq:theta_o_L1}, and \eqref{eq:w_pc_L1} we have
\begin{align*}
\big\|R_{\Lin}  \big \|_{L^1_{t,x}}   \leq C_{\rho,u,R,\nu}      \big(  \mu^{\frac{d-1}{p }  -  {d-1}   }    + \kappa^{-1} +\sigma^{-1} \big).  
\end{align*}

Thanks to Lemma \ref{lemma:bound_hierarchy}, for sufficiently large $\l$ we have
\begin{align}\label{eq:R_l2_L1_bounded}
\|R_{\Lin }  \|_{L^1_{t,x}} \leq   \frac{\delta }{16 } .
\end{align}

\end{proof}

\subsection{Correction error}
\begin{lemma}[$R_{\Cor}$ estimate]
For $\l$ sufficiently large,
\begin{align*}
\| R_{\Cor}  \|_{L^1_{t,x}} &\leq   \frac{\delta }{16 } .
\end{align*}
\end{lemma}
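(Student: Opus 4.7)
The plan is to split $R_{\Cor}$ into three pieces,
$$R_{\Cor} = \theta w_c + \theta_c w_p + \theta_o w_p,$$
(together with lower order $(\theta_o+\theta_c)w_c$ contributions, which are handled in the same way but are strictly better) and estimate each summand separately by a direct application of H\"older's inequality in space-time, drawing on the perturbation bounds already established in Section~\ref{section:prop_proof2}. Because every factor that appears here carries either a temporal intermittency weight $(\widetilde{g}_\kappa$ or $g_\kappa)$, a gain from $\sigma^{-1}$ coming from the antidivergence, or an $L^\infty_{t,x}$ smallness from the temporal oscillator, each piece will be bounded by a negative power of $\l$ times a constant depending only on $\rho,u,R,\delta,\nu$.

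For the first piece, I would pair the spatial integrabilities by writing
$$\|\theta w_c\|_{L^1_{t,x}} \leq \|\theta\|_{L^1_t L^p}\,\|w_c\|_{L^\infty_t L^{p'}} \lesssim \bigl(\nu M\|R\|_{L^1_{t,x}}^{1/p}\bigr)\cdot\bigl(C_R \nu \sigma^{-1}\bigr) = C_{R,\nu}\sigma^{-1},$$
using Lemma~\ref{lemma:proof_main_prop_1} for $\theta$ and Lemma~\ref{lemma:w_c_estimate} for $w_c$. Analogously, for the second piece I would apply H\"older in the opposite temporal order,
$$\|\theta_c w_p\|_{L^1_{t,x}} \leq \|\theta_c\|_{L^1_t L^p}\,\|w_p\|_{L^\infty_t L^{p'}} \lesssim \bigl(C_R\nu^{-1}\sigma^{-1}\bigr)\cdot\bigl(\nu M\|R\|_{L^1_{t,x}}^{1/p'}\bigr) = C_R\sigma^{-1},$$
using Lemma~\ref{lemma:theta_c_estimate} and Lemma~\ref{lemma:w_p_estimate}.

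The slightly trickier summand is $\theta_o w_p$, since Lemma~\ref{lemma:theta_o_estimate} only controls $\theta_o$ in $L^\infty_{t,x}$ and not in $L^\infty_t L^p$ with a useful constant. Here the plan is to use instead
$$\|\theta_o w_p\|_{L^1_{t,x}} \leq \|\theta_o\|_{L^\infty_{t,x}}\,\|w_p\|_{L^1_{t,x}},$$
and to bound $\|w_p\|_{L^1_{t,x}}$ by $\|w_p\|_{L^1_t W^{1,q}}$ using that $W^{1,q}(\TT^d)\hookrightarrow L^1(\TT^d)$ (valid since $q\geq 1$ and $\TT^d$ has finite measure). By Lemmas~\ref{lemma:theta_o_estimate} and~\ref{lemma:w_p_estimate}, this gives $\|\theta_o w_p\|_{L^1_{t,x}} \lesssim (C_R\sigma^{-1})\cdot(\nu C_R \l^{-\gamma})$, which is much smaller than $\delta$.

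The main (and really only) subtlety is choosing, for each summand, the correct H\"older pairing between $L^1_t/L^\infty_t$ and $L^p/L^{p'}$ so that the estimates from Section~\ref{section:prop_proof2} actually apply; in particular one must avoid putting $\theta_o$ in $L^\infty_t L^p$ (which has no $\sigma^{-1}$ smallness with the needed constant) and avoid putting $w_p$ in $L^1_t L^{p'}$ without reference to the $W^{1,q}$ control. Once the pairings are chosen as above, every contribution is at worst of order $\sigma^{-1} = \l^{-\gamma}$ with a prefactor depending only on $R,\rho,u,\nu$, so taking $\l$ sufficiently large in terms of $\delta$, $\nu$ and the previous triple $(\rho,u,R)$ yields $\|R_{\Cor}\|_{L^1_{t,x}}\leq \delta/16$, as required.
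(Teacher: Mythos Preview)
Your approach is correct and essentially mirrors the paper's proof: split $R_{\Cor}$ and apply H\"older with the $L^1_t L^p$/$L^\infty_t L^{p'}$ pairing, invoking the lemmas from Section~\ref{section:prop_proof2}. The only difference is your treatment of $\theta_o w_p$. The paper handles it exactly like $\theta_c w_p$, pairing $\|\theta_o\|_{L^1_t L^p}$ with $\|w_p\|_{L^\infty_t L^{p'}}$; this works because the $L^\infty_{t,x}$ bound $\|\theta_o\|_{L^\infty_{t,x}}\leq C_R\sigma^{-1}$ from Lemma~\ref{lemma:theta_o_estimate} immediately gives $\|\theta_o\|_{L^1_t L^p}\leq C_R\sigma^{-1}$ on the finite-measure domain $[0,1]\times\TT^d$. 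So your worry that ``Lemma~\ref{lemma:theta_o_estimate} only controls $\theta_o$ in $L^\infty_{t,x}$ and not in $L^\infty_t L^p$ with a useful constant'' is unfounded---the $L^\infty_{t,x}$ bound \emph{is} the useful $L^1_t L^p$ (or even $L^\infty_t L^p$) bound. Your detour through $\|w_p\|_{L^1_t W^{1,q}}$ is valid but unnecessary.
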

\begin{proof}
By H\"older's inequality we have
\begin{align*}
\| R_{\Cor}  \|_{L^1_{t,x}} \leq      \| \theta\|_{L^1 L^p}  \|  w_c  \|_{L^\infty L^{p'}} +( \| \theta_o \|_{L^1 L^p} +  \| \theta_c\|_{L^1 L^p} ) \| w_p \|_{L^\infty L^{p'}}.
\end{align*}

All terms have been estimated before, and by Lemma \ref{lemma:theta_p_estimate}, \ref{lemma:theta_c_estimate}, \ref{lemma:theta_o_estimate}, \ref{lemma:w_p_estimate}, \ref{lemma:w_c_estimate} we have
\begin{align*}
\| R_{\Cor}  \|_{L^1_{t,x}} &\lesssim_{R}       \nu^{-1} \|  w_c  \|_{L^\infty L^{p'}} + \nu( \| \theta_o \|_{L^1 L^p} +  \| \theta_c\|_{L^1 L^p} )   \\
& \leq C_{R}   \sigma^{-1}  (   \nu^{-1}   +  \nu     ), 
\end{align*}
which concludes the proof.

\end{proof}

\subsection{Oscillation errors}

We will estimate $R_{\Osc}$ according to the decomposition in Lemma \ref{lemma:final_decomposition_R_osc}.

For reference, we recall that
$$
R_{\Osc} = R_{\text{osc,x}} + R_{\text{osc,t}}  + R_{\Rem},
$$
where $R_{\text{osc,x}} $ is the error of high frequency in space
$$
R_{\text{osc,x}} =  \widetilde{g}_{\kappa}  (\sigma t)  g_{\kappa} (\sigma  t)  \sum_{1\leq k \leq d} \mathcal{B} \Big(  \nabla(A_k B_k) , \big(  \bp_k \bw_k (\sigma x) -\fint_{\TT^d}  \bp_k \bw_k \, dx \big)  \Big)  ,
$$
$R_{\text{osc,t}} $ is the error of high frequency in time
$$
R_{\text{osc,t}} =  \sigma^{-1} h(\sigma t) \mathcal{R}  \sum_{1\leq k \leq d} \ek \cdot  \p_t \nabla (\chi_k^2 R_k) ,
$$
and $R_{\Rem}$ is the remainder error
$$
R_{\Rem}= \sum_{1\leq k \leq d}  (1-\chi_k^2 ) R_k \ek.
$$

We start with $R_{\text{osc,x}} $.

\begin{lemma}[$ R_{\text{osc,x}} $ estimate]
For $\l$ sufficiently large,
\begin{align*}
\| R_{\text{osc,x}}  \|_{L^1_{t,x}} &\leq   \frac{\delta }{16 }  .
\end{align*}
\end{lemma}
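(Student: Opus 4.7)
The plan is to exploit the $\sigma^{-1}$ gain coming from the bilinear antidivergence operator $\mathcal{B}$ acting on the zero-mean high-frequency factor $\bp_k \bw_k(\sigma \cdot) - \fint_{\TT^d} \bp_k \bw_k$. Since the mean has been carefully subtracted in the definition of $R_{\text{osc,x}}$, the antidivergence $\mathcal{R}$ can indeed be applied productively to produce an inverse power of the oscillation parameter.

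First, I would take the $L^1_x$ norm and apply Lemma \ref{lemma:cheapbound_B} term by term to obtain
$$
\big\|\mathcal{B}\bigl(\nabla(A_k B_k), \bp_k \bw_k(\sigma\cdot) - \ts\fint \bp_k \bw_k\bigr)\big\|_{L^1_x} \lesssim \|\nabla(A_k B_k)\|_{C^1}\, \bigl\|\mathcal{R}\bigl(\bp_k \bw_k(\sigma\cdot) - \ts\fint \bp_k \bw_k\bigr)\bigr\|_{L^1_x}.
$$
The identity $A_k B_k = -\chi_k^2 R_k$ together with Lemma \ref{lemma:a_k} and the smoothness of the cutoff $\chi_k$ immediately give $\|\nabla(A_k B_k)\|_{C^1} \leq C_R$, since this factor depends only on the fixed data $(\rho,u,R,\delta)$ and not on $\lambda$.

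Next, because $h := \bp_k \bw_k - \fint \bp_k \bw_k$ is zero-mean, I would invoke the periodic rescaling identity $\mathcal{R}(h(\sigma\cdot)) = \sigma^{-1}(\mathcal{R}h)(\sigma\cdot)$ recorded in Section \ref{section:pre}, together with the fact that $\sigma \in \NN$ preserves $L^r$-norms under the map $x \mapsto \sigma x$. Combined with Lemma \ref{lemma:antidivergence_bounded} and Hölder's inequality paired with Proposition \ref{prop:bounds_periodic_phi_w} to bound $\|\bp_k\bw_k\|_{L^1} \leq \|\bp_k\|_{L^p}\|\bw_k\|_{L^{p'}} \lesssim 1$, this yields
$$
\big\|\mathcal{R}\bigl(\bp_k \bw_k(\sigma\cdot) - \ts\fint \bp_k \bw_k\bigr)\big\|_{L^1_x} \lesssim \sigma^{-1}.
$$
For the temporal prefactor, Cauchy--Schwarz together with \eqref{eq:def_g_theta_g_w} gives $\|\widetilde{g}_\kappa g_\kappa\|_{L^1([0,1])} \lesssim \|\widetilde{g}_\kappa\|_{L^2}\|g_\kappa\|_{L^2} \lesssim 1$, and this $L^1$-norm is preserved under the temporal $\sigma$-rescaling since $\sigma \in \NN$. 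Assembling the three pieces gives
$$
\|R_{\text{osc,x}}\|_{L^1_{t,x}} \lesssim C_R\, \sigma^{-1},
$$
and taking $\lambda$ (hence $\sigma = \lfloor \lambda^{\gamma}\rfloor$) sufficiently large reduces the right-hand side below $\delta/16$.

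The main subtlety would be the $L^1$-endpoint behavior of $\mathcal{R}$ (typically a Calderón--Zygmund type issue), but Lemma \ref{lemma:antidivergence_bounded} in the paper provides exactly this endpoint bound; if one wished to avoid it, working with some $r>1$ arbitrarily close to $1$ and absorbing the resulting small power of $\mu$ against the $\sigma^{-1}$ gain via Lemma \ref{lemma:bound_hierarchy} would work equally well. No sharp estimate is required here---unlike the $R_{\Tem}$ bound, which needs the potential $\mathbf{\Omega}_k$ to gain a full derivative---so this is the easiest of the oscillation errors to control.
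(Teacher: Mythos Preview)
Your proposal is correct and follows essentially the same approach as the paper: apply Lemma~\ref{lemma:cheapbound_B} to extract the $C^1$ norm of $\nabla(A_kB_k)$, use the periodic rescaling of $\mathcal{R}$ on the zero-mean factor $\bp_k\bw_k(\sigma\cdot)-\fint\bp_k\bw_k$ to gain $\sigma^{-1}$, and bound the temporal factor $\widetilde g_\kappa g_\kappa$ in $L^1_t$. Your write-up is in fact slightly more explicit than the paper's in justifying $\|\mathcal{R}(\bp_k\bw_k-\fint\bp_k\bw_k)\|_{L^1}\lesssim 1$ via H\"older and Proposition~\ref{prop:bounds_periodic_phi_w}, and in handling the time integral via Cauchy--Schwarz; the paper simply absorbs these into constants.
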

\begin{proof}
Denote $\mathbf{\Theta }_k \in C^\infty_0(\TT^d) $ by
$$
\mathbf{\Theta }_k =   \bp_k \bw_k  -\fint_{\TT^d}  \bp_k \bw_k \, dx,
$$
so that $R_{\text{osc,x}} $ reads
\begin{align*}
R_{\text{osc,x}} &=  \widetilde{g}_{\kappa}  (\sigma t)  g_{\kappa} (\sigma  t)   \sum_{1\leq k \leq d}   \mathcal{B} \Big( \nabla(A_k B_k) ,  \mathbf{\Theta }_k(\sigma \cdot)  \Big) .
\end{align*}

We take $L^1 $ norm in space to obtain
\begin{align*}
\|    R_{\text{osc,x}} (t)\|_{1 } \leq \big|   \widetilde{g}_{\kappa}  (\sigma t)  g_{\kappa} (\sigma  t) \big| \sum_{1\leq k \leq d}  \big\| \mathcal{B} \big( \nabla(A_k B_k) ,  \mathbf{\Theta }_k(\sigma \cdot)  \big)    \big\|_1.
\end{align*}
Applying Lemma \ref{lemma:cheapbound_B} gives
\begin{align*}
\Big\| \mathcal{B} \big( \nabla(A_k B_k) ,  \mathbf{\Theta }_k(\sigma \cdot)  \big)  \Big\|_1  &\lesssim C_R \sigma^{-1} \| \mathcal{R}   \mathbf{\Theta }_k  \|_1  .
\end{align*}
It follows that
\begin{align*}
\|    R_{\text{osc,x}} (t)\|_{1} \leq C_R \big|   \widetilde{g}_{\kappa}  (\sigma t)  g_{\kappa} (\sigma  t) \big|  \sigma^{-1}   .
\end{align*}
So for $L^1_{t,x}$ norm we have
\begin{align*}
\|    R_{\text{osc,x}} \|_{L^1_{t,x} } \leq   C_R   \sigma^{-1} .
\end{align*}

\end{proof}

\begin{lemma}[$ R_{\text{osc,t}} $ estimate]
For $\l$ sufficiently large,
\begin{align*}
\| R_{\text{osc,t}}  \|_{L^1_{t,x}} &\leq  \frac{\delta }{16 }  .
\end{align*}
\end{lemma}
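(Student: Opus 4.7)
The plan is to estimate $R_{\text{osc,t}}$ by exploiting the explicit prefactor $\sigma^{-1}$ together with the uniform boundedness of $h$ and the smoothness of the cut off old defect field $\chi_k^2 R_k$.

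First I would take the $L^1_x$ norm inside the sum and pull out the scalar time-dependent factor:
\begin{align*}
\| R_{\text{osc,t}}(t)\|_{L^1_x} \leq \sigma^{-1} |h(\sigma t)| \sum_{k} \big\| \mathcal{R}\big( \ek\cdot \partial_t \nabla(\chi_k^2 R_k)(t,\cdot)\big)\big\|_{L^1_x}.
\end{align*}
Next I would apply Lemma \ref{lemma:antidivergence_bounded} (boundedness of $\mathcal{R}$ on $W^{m,r}(\TT^d)$ with $m=0$, $r=1$) to move $\mathcal{R}$ off the expression, obtaining the pointwise-in-time bound
\begin{align*}
\| R_{\text{osc,t}}(t)\|_{L^1_x} \lesssim \sigma^{-1} |h(\sigma t)| \sum_k \|\partial_t \nabla(\chi_k^2 R_k)(t,\cdot)\|_{L^1_x} \leq C_R \,\sigma^{-1} |h(\sigma t)|,
\end{align*}
where the last inequality uses that $R$ is smooth and $\chi_k$ is constructed from $R$, so every space-time derivative of $\chi_k^2 R_k$ is bounded by an $R$- and $\delta$-dependent constant.

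Finally I would integrate in $t\in[0,1]$ and use the uniform-in-$\kappa$ bound $\|h\|_{L^\infty[0,1]}\leq 1$ from \eqref{eq:def_h_mu_t_L_1}, which in particular gives $\|h(\sigma\cdot)\|_{L^1([0,1])}\leq 1$, to conclude
\begin{align*}
\|R_{\text{osc,t}}\|_{L^1_{t,x}} \leq C_R\, \sigma^{-1}.
\end{align*}
Since $\sigma = \lfloor \lambda^\gamma\rfloor$, choosing $\lambda$ sufficiently large (depending on $R$ and $\delta$) makes this $\leq \delta/16$, completing the proof. No step is really an obstacle here — the essential point is that $R_{\text{osc,t}}$ was engineered in Lemma \ref{lemma:final_decomposition_R_osc} precisely so that the high-temporal-frequency error $R_{\text{hi,t}}$ is absorbed into an expression carrying an explicit $\sigma^{-1}$, and the only task now is to verify that the remaining smooth spatial piece is harmless, which it is since it depends only on the fixed previous solution.
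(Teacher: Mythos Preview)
Your proof is correct and follows essentially the same route as the paper: apply Lemma~\ref{lemma:antidivergence_bounded} to remove $\mathcal{R}$, use the smoothness of $\chi_k^2 R_k$ to absorb the spatial piece into a constant $C_R$, and then use the uniform bound \eqref{eq:def_h_mu_t_L_1} on $h$ together with the explicit $\sigma^{-1}$ prefactor. The only cosmetic difference is that the paper takes the $L^1_{t,x}$ norm directly rather than first working pointwise in time.
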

\begin{proof}
By Lemma \ref{lemma:antidivergence_bounded}, we have
$$
\| R_{\text{osc,t}} \|_{L^1_{t,x}} \lesssim  \sigma^{-1} \big\|  h(\sigma t)    \sum_{1\leq k \leq d} \ek \cdot  \p_t \nabla (\chi_k^2 R_k)  \big\|_{L^1_{t,x}}.
$$
It follows from H\"older's inequality that
$$
\| R_{\text{osc,t}} \|_{L^1_{t,x}} \leq C_R \sigma^{-1}  \big\|  h(\sigma t)   \big\|_{L^1([0,1])} \leq  C_R \sigma^{-1} .
$$

\end{proof}

\begin{lemma}[$ R_{\Rem}  $ estimate]
There holds
\begin{align*}
\| R_{\Rem}  \|_{L^1_{t,x}} &\leq \frac{\delta}{2}  .
\end{align*}
\end{lemma}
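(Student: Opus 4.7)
The plan is to bound $\|R_{\Rem}\|_{L^1_{t,x}}$ component-by-component by exploiting the explicit support properties of $1-\chi_k^2$ built into the definition \eqref{eq:def_chi_j}. Since $\chi_k\equiv 1$ on the set $\{|R_k|\geq \delta/(4d)\}\cap (I_r\times\TT^d)$, the factor $(1-\chi_k^2)$ vanishes there, so $(1-\chi_k^2)R_k$ is supported on the union
\[
\{(t,x)\in [0,1]\times\TT^d : |R_k(t,x)|<\tfrac{\delta}{4d}\}\;\cup\;\big(([0,1]\setminus I_r)\times\TT^d\big).
\]
I will split the $L^1_{t,x}$ integral according to this dichotomy.

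On the first set, the trivial pointwise bound gives $(1-\chi_k^2)|R_k|\leq |R_k|<\delta/(4d)$, which integrates to at most $\delta/(4d)$. On the second set, which has time-measure $2r$ in $[0,1]$, I bound $(1-\chi_k^2)|R_k|\leq \|R\|_{L^\infty_{t,x}}$; the condition \eqref{eq:def_constant_r} on the parameter $r$ was precisely chosen so that $2r\|R\|_{L^\infty_{t,x}}$ is sufficiently small (at most $\delta/(4d)$ per component, up to possibly shrinking $r$). Summing the two contributions and then over $k=1,\dots,d$ yields $\|R_{\Rem}\|_{L^1_{t,x}}\leq \delta/2$.

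There is no genuine obstacle: the cutoffs $\chi_k$ and the shrunken time window $I_r$ were engineered in Section~\ref{section:prop_proof1} for exactly this purpose, so that the region on which the new perturbation cannot cancel the old defect field is quantitatively negligible in $L^1_{t,x}$. The only point requiring any care is to ensure that the smallness of $r$ from \eqref{eq:def_constant_r} is invoked to control the time-boundary contribution; this is a constant fixed before $\l$, so no circular dependence arises.
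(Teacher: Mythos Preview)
Your proof is correct and follows essentially the same approach as the paper: split the support of $1-\chi_k^2$ into the region $\{|R_k|<\delta/(4d)\}$ and the time-boundary layer $([0,1]\setminus I_r)\times\TT^d$, bound each contribution trivially, and sum over $k$. Your parenthetical remark ``up to possibly shrinking $r$'' is apt, since the condition \eqref{eq:def_constant_r} as stated does not literally depend on $\delta$; the paper's final arithmetic implicitly requires $r$ small enough that $2rd\|R\|_{L^\infty_{t,x}}\leq\delta/4$, which is harmless since $r$ is fixed before $\lambda$ and may depend on $\delta$.
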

\begin{proof}
We need to estimate
\begin{align*}
\| R_{\Rem} \|_{L^1_{t,x}}  \leq \sum_{1\leq k \leq d} \| (1-\chi_k^2 ) R_k \ek \|_{L^1_{t,x}}.
\end{align*}
Note that 
$$
(t,x) \in \Supp (1-\chi_k^2 ) \Rightarrow |R_k| \leq \frac{\delta}{ 4  d} \quad \text{or } \quad t\in I_{r }^c,
$$
and thus by \eqref{eq:def_constant_r} we have
\begin{align*}
\| R_{\Rem} \|_{L^1_{t,x}}  & \leq \sum_{1\leq k \leq d} \int_{|R_k| \leq \frac{\delta}{ 4 d} } (1-\chi_k^2 ) |R_k|  \, dx dt  + \int_{t\in I_{r }^c } (1-\chi_k^2 ) |R_k|  \, dx dt \\
& \leq d \times \big(  |[0,1] \times \TT^d|  \times \frac{\delta}{4d} + 2 r   \times \|R\|_{L^\infty_{t,x}}\big)  = \frac{\delta}{2} .
\end{align*}

\end{proof}

\subsection{Conclusion of the proof of Proposition \ref{prop:main_prop}}

We can finish the proof of Proposition \ref{prop:main_prop} by showing \eqref{eq:main_prop_5}.

We take $\l$ sufficiently large so that all lemmas in this section and Lemma \ref{lemma:proof_main_prop_1} hold. Then the new defect field $R$ satisfies
\begin{align*}
\|R \|_{L^1([0,1 \times \TT^d])} & \leq \|R_{\Tem} \|_{L^1_{t,x}} +\|R_{\Lin} \|_{L^1_{t,x}} + \|R_{\Cor} \|_{L^1_{t,x}}  \\
& \qquad + \|R_{\text{osc,x}}  \|_{L^1_{t,x}} + \|R_{\text{osc,t}} \|_{L^1_{t,x}}   +  \|R_{\Rem }  \|_{L^1_{t,x}}\\ 
& \leq  5 \times \frac{\delta}{16} + \frac{\delta}{2}\\
&\leq \delta.
\end{align*}

\appendix

\section*{Acknowledgement}
AC was partially supported by the NSF grant DMS--1909849. The authors thank the anonymous referee for helpful comments.

\bibliographystyle{alpha}
\bibliography{nonuniqueness_transport} 

\begin{thebibliography}{CGSW15}

\bibitem[ACF15]{MR3375545}
Luigi Ambrosio, Maria Colombo, and Alessio Figalli.
\newblock Existence and uniqueness of maximal regular flows for non-smooth
  vector fields.
\newblock {\em Arch. Ration. Mech. Anal.}, 218(2):1043--1081, 2015.

\bibitem[ACM19]{MR3904158}
Giovanni Alberti, Gianluca Crippa, and Anna~L. Mazzucato.
\newblock Exponential self-similar mixing by incompressible flows.
\newblock {\em J. Amer. Math. Soc.}, 32(2):445--490, 2019.

\bibitem[Aiz78]{MR482853}
Michael Aizenman.
\newblock On vector fields as generators of flows: a counterexample to
  {N}elson's conjecture.
\newblock {\em Ann. of Math. (2)}, 107(2):287--296, 1978.

\bibitem[Amb04]{MR2096794}
Luigi Ambrosio.
\newblock Transport equation and {C}auchy problem for {$BV$} vector fields.
\newblock {\em Invent. Math.}, 158(2):227--260, 2004.

\bibitem[Amb17]{MR3690748}
Luigi Ambrosio.
\newblock Well posedness of {ODE}'s and continuity equations with nonsmooth
  vector fields, and applications.
\newblock {\em Rev. Mat. Complut.}, 30(3):427--450, 2017.

\bibitem[BBV20]{1907.10436}
Rajendra Beekie, Tristan Buckmaster, and Vlad Vicol.
\newblock Weak solutions of ideal {MHD} which do not conserve magnetic
  helicity.
\newblock {\em Ann. PDE}, 6(1):Paper No. 1, 40, 2020.

\bibitem[BCL20]{2003.00539}
Elia Bruè, Maria Colombo, and Camillo~De Lellis.
\newblock Positive solutions of transport equations and classical nonuniqueness
  of characteristic curves.
\newblock {\em \eprint{2003.00539}}, 2020.

\bibitem[BCV20]{1809.00600}
Tristan Buckmaster, Maria Colombo, and Vlad Vicol.
\newblock Wild solutions of the {N}avier-{S}tokes equations whose singular sets
  in time have {H}ausdorff dimension strictly less than 1.
\newblock {\em J. Eur. Math. Soc. (JEMS)}, to appear, 2020.

\bibitem[BDLIS15]{MR3374958}
Tristan Buckmaster, Camillo De~Lellis, Philip Isett, and L\'aszl\'o
  Sz\'ekelyhidi, Jr.
\newblock Anomalous dissipation for {$1/5$}-{H}\"older {E}uler flows.
\newblock {\em Ann. of Math. (2)}, 182(1):127--172, 2015.

\bibitem[BDLS16]{MR3530360}
Tristan Buckmaster, Camillo De~Lellis, and L\'aszl\'o Sz\'ekelyhidi, Jr.
\newblock Dissipative {E}uler flows with {O}nsager-critical spatial regularity.
\newblock {\em Comm. Pure Appl. Math.}, 69(9):1613--1670, 2016.

\bibitem[BLJV18]{1701.08678}
T.~Buckmaster, C.~De Lellis, L.~Székelyhidi Jr., and V.~Vicol.
\newblock Onsager's conjecture for admissible weak solutions.
\newblock {\em Comm. Pure Appl. Math.}, to appear, 2018.

\bibitem[BN18]{1806.03466}
Elia Bruè and Quoc-Hung Nguyen.
\newblock Sharp regularity estimates for solutions of the continuity equation
  drifted by sobolev vector fields.
\newblock {\em \eprint{1806.03466}}, 2018.

\bibitem[BSV19]{1610.00676}
Tristan Buckmaster, Steve Shkoller, and Vlad Vicol.
\newblock Nonuniqueness of weak solutions to the {SQG} equation.
\newblock {\em Comm. Pure Appl. Math.}, 72(9):1809--1874, 2019.

\bibitem[BV19a]{MR4073888}
Tristan Buckmaster and Vlad Vicol.
\newblock Convex integration and phenomenologies in turbulence.
\newblock {\em EMS Surv. Math. Sci.}, 6(1):173--263, 2019.

\bibitem[BV19b]{MR3898708}
Tristan Buckmaster and Vlad Vicol.
\newblock Nonuniqueness of weak solutions to the {N}avier-{S}tokes equation.
\newblock {\em Ann. of Math. (2)}, 189(1):101--144, 2019.

\bibitem[CC16]{MR3573924}
Laura Caravenna and Gianluca Crippa.
\newblock Uniqueness and {L}agrangianity for solutions with lack of
  integrability of the continuity equation.
\newblock {\em C. R. Math. Acad. Sci. Paris}, 354(12):1168--1173, 2016.

\bibitem[CC18]{1812.06817}
Laura Caravenna and Gianluca Crippa.
\newblock A {D}irectional {L}ipschitz {E}xtension {L}emma, with {A}pplications
  to {U}niqueness and {L}agrangianity for the {C}ontinuity {E}quation.
\newblock {\em \eprint{1812.06817}}, 2018.

\bibitem[CDL08]{MR2369485}
Gianluca Crippa and Camillo De~Lellis.
\newblock Estimates and regularity results for the {D}i{P}erna-{L}ions flow.
\newblock {\em J. Reine Angew. Math.}, 616:15--46, 2008.

\bibitem[CGSW15]{MR3393180}
Gianluca Crippa, Nikolay Gusev, Stefano Spirito, and Emil Wiedemann.
\newblock Non-uniqueness and prescribed energy for the continuity equation.
\newblock {\em Commun. Math. Sci.}, 13(7):1937--1947, 2015.

\bibitem[CKN82]{MR673830}
L.~Caffarelli, R.~Kohn, and L.~Nirenberg.
\newblock Partial regularity of suitable weak solutions of the
  {N}avier-{S}tokes equations.
\newblock {\em Comm. Pure Appl. Math.}, 35(6):771--831, 1982.

\bibitem[CKV16]{MR3569243}
Peter Constantin, Igor Kukavica, and Vlad Vicol.
\newblock Contrast between {L}agrangian and {E}ulerian analytic regularity
  properties of {E}uler equations.
\newblock {\em Ann. Inst. H. Poincar\'{e} Anal. Non Lin\'{e}aire},
  33(6):1569--1588, 2016.

\bibitem[CL02]{MR1882138}
Ferruccio Colombini and Nicolas Lerner.
\newblock Uniqueness of continuous solutions for {BV} vector fields.
\newblock {\em Duke Math. J.}, 111(2):357--384, 2002.

\bibitem[CL19]{1901.07485}
A.~Cheskidov and X.~Luo.
\newblock Stationary and discontinuous weak solutions of the {N}avier-{S}tokes
  equations.
\newblock {\em arXiv:1901.07485}, 2019.

\bibitem[CLR03]{MR2030717}
F.~Colombini, T.~Luo, and J.~Rauch.
\newblock Uniqueness and nonuniqueness for nonsmooth divergence free transport.
\newblock In {\em Seminaire: \'{E}quations aux {D}\'{e}riv\'{e}es {P}artielles,
  2002--2003}, S\'{e}min. \'{E}qu. D\'{e}riv. Partielles, pages Exp. No. XXII,
  21. \'{E}cole Polytech., Palaiseau, 2003.

\bibitem[CS14]{MR3152734}
A.~Cheskidov and R.~Shvydkoy.
\newblock Euler equations and turbulence: analytical approach to intermittency.
\newblock {\em SIAM J. Math. Anal.}, 46(1):353--374, 2014.

\bibitem[Dai18]{1812.11311}
Mimi Dai.
\newblock Non-uniqueness of {L}eray-{H}opf weak solutions of the 3{D}
  {H}all-{MHD} system.
\newblock {\em \eprint{1812.11311}}, 2018.

\bibitem[DEIJ19]{1911.03271}
Theodore~D. Drivas, Tarek~M. Elgindi, Gautam Iyer, and In-Jee Jeong.
\newblock Anomalous {D}issipation in {P}assive {S}calar {T}ransport.
\newblock {\em \eprint{1911.03271}}, 2019.

\bibitem[Dep03]{MR2009116}
Nicolas Depauw.
\newblock Non unicit\'{e} des solutions born\'{e}es pour un champ de vecteurs
  {BV} en dehors d'un hyperplan.
\newblock {\em C. R. Math. Acad. Sci. Paris}, 337(4):249--252, 2003.

\bibitem[DL89]{MR1022305}
R.~J. DiPerna and P.-L. Lions.
\newblock Ordinary differential equations, transport theory and {S}obolev
  spaces.
\newblock {\em Invent. Math.}, 98(3):511--547, 1989.

\bibitem[DL08]{MR2425023}
Camillo De~Lellis.
\newblock O{DE}s with {S}obolev coefficients: the {E}ulerian and the
  {L}agrangian approach.
\newblock {\em Discrete Contin. Dyn. Syst. Ser. S}, 1(3):405--426, 2008.

\bibitem[DLS09]{MR2600877}
C.~De~Lellis and L.~Sz\'ekelyhidi, Jr.
\newblock The {E}uler equations as a differential inclusion.
\newblock {\em Ann. of Math. (2)}, 170(3):1417--1436, 2009.

\bibitem[DLS13]{MR3090182}
Camillo De~Lellis and L\'aszl\'o Sz\'ekelyhidi, Jr.
\newblock Dissipative continuous {E}uler flows.
\newblock {\em Invent. Math.}, 193(2):377--407, 2013.

\bibitem[DLS14]{MR3254331}
Camillo De~Lellis and L\'{a}szl\'o Sz\'{e}kelyhidi, Jr.
\newblock Dissipative {E}uler flows and {O}nsager's conjecture.
\newblock {\em J. Eur. Math. Soc. (JEMS)}, 16(7):1467--1505, 2014.

\bibitem[DM87a]{MR882068}
Ronald~J. DiPerna and Andrew~J. Majda.
\newblock Concentrations in regularizations for {$2$}-{D} incompressible flow.
\newblock {\em Comm. Pure Appl. Math.}, 40(3):301--345, 1987.

\bibitem[DM87b]{MR877643}
Ronald~J. DiPerna and Andrew~J. Majda.
\newblock Oscillations and concentrations in weak solutions of the
  incompressible fluid equations.
\newblock {\em Comm. Math. Phys.}, 108(4):667--689, 1987.

\bibitem[DS17]{MR3614753}
Sara Daneri and L\'aszl\'o Sz\'ekelyhidi, Jr.
\newblock Non-uniqueness and h-principle for {H}\"older-continuous weak
  solutions of the {E}uler equations.
\newblock {\em Arch. Ration. Mech. Anal.}, 224(2):471--514, 2017.

\bibitem[Fri95]{MR1428905}
Uriel Frisch.
\newblock {\em Turbulence}.
\newblock Cambridge University Press, Cambridge, 1995.
\newblock The legacy of A. N. Kolmogorov.

\bibitem[Ise18]{MR3866888}
Philip Isett.
\newblock A proof of {O}nsager's conjecture.
\newblock {\em Ann. of Math. (2)}, 188(3):871--963, 2018.

\bibitem[LBL04]{MR2044334}
C.~Le~Bris and P.-L. Lions.
\newblock Renormalized solutions of some transport equations with partially
  {$W^{1,1}$} velocities and applications.
\newblock {\em Ann. Mat. Pura Appl. (4)}, 183(1):97--130, 2004.

\bibitem[Luo19]{MR3951691}
Xiaoyutao Luo.
\newblock Stationary solutions and nonuniqueness of weak solutions for the
  {N}avier-{S}tokes equations in high dimensions.
\newblock {\em Arch. Ration. Mech. Anal.}, 233(2):701--747, 2019.

\bibitem[Man76]{MR0495674}
Benoit Mandelbrot.
\newblock Intermittent turbulence and fractal dimension: kurtosis and the
  spectral exponent {$5/3+B$}.
\newblock In {\em Turbulence and {N}avier-{S}tokes equations ({P}roc. {C}onf.,
  {U}niv. {P}aris-{S}ud, {O}rsay, 1975)}, pages 121--145. Lecture Notes in
  Math., Vol. 565. Springer, Berlin, 1976.

\bibitem[MS18]{MR3884855}
Stefano Modena and L\'{a}szl\'{o} Sz\'{e}kelyhidi, Jr.
\newblock Non-uniqueness for the transport equation with {S}obolev vector
  fields.
\newblock {\em Ann. PDE}, 4(2):Art. 18, 38, 2018.

\bibitem[MS19]{MR4029736}
Stefano Modena and L\'{a}szl\'{o} Sz\'{e}kelyhidi, Jr.
\newblock Non-renormalized solutions to the continuity equation.
\newblock {\em Calc. Var. Partial Differential Equations}, 58(6):Art. 208, 30,
  2019.

\bibitem[MS20]{1902.08521}
Stefano Modena and Gabriel Sattig.
\newblock Convex integration solutions to the transport equation with full
  dimensional concentration.
\newblock {\em Annales de l'Institut Henri Poincaré C, Analyse non linéaire},
  to appear, 2020.

\bibitem[Nov20]{1812.08734}
Matthew Novack.
\newblock Nonuniqueness of weak solutions to the 3 dimensional
  quasi-geostrophic equations.
\newblock {\em SIAM Journal on Mathematical Analysis}, 52(4):3301--3349, 2020.

\bibitem[RS09a]{MR2534294}
James~C. Robinson and Witold Sadowski.
\newblock Almost-everywhere uniqueness of {L}agrangian trajectories for
  suitable weak solutions of the three-dimensional {N}avier-{S}tokes equations.
\newblock {\em Nonlinearity}, 22(9):2093--2099, 2009.

\bibitem[RS09b]{MR2520506}
James~C. Robinson and Witold Sadowski.
\newblock A criterion for uniqueness of {L}agrangian trajectories for weak
  solutions of the 3{D} {N}avier-{S}tokes equations.
\newblock {\em Comm. Math. Phys.}, 290(1):15--22, 2009.

\bibitem[YZ17]{MR3656475}
Yao Yao and Andrej Zlato\v{s}.
\newblock Mixing and un-mixing by incompressible flows.
\newblock {\em J. Eur. Math. Soc. (JEMS)}, 19(7):1911--1948, 2017.

\end{thebibliography}

\end{document}